\documentclass[12pt]{article}
\usepackage{amsmath}
\usepackage{amssymb}
\usepackage{amsthm}
\usepackage{mathrsfs}
\usepackage{appendix}
\usepackage{hyperref}
\usepackage{geometry}
\usepackage{multirow}
\usepackage{authblk}
\usepackage{tikz}
\usepackage{titlesec}
\usepackage{stmaryrd}

\usetikzlibrary{positioning, shapes.geometric}

\newtheorem{definition}{Definition}
\newtheorem{lemma}{Lemma}
\newtheorem{theorem}{Theorem}
\newtheorem{proposition}{Proposition}
\newtheorem{remark}{Remark}[section]

\newtheorem{assumption}{Assumption}

\newcommand{\e}{\mathrm{e}}
\renewcommand{\i}{\mathrm{i}}
\renewcommand{\d}{\mathrm{d}}

\title{Effective stability for Hamiltonian PDEs with asymptotically vanishing spectral gaps}
\date{}
\author[1]{Bingqi Yu}
\author[,1,2]{Yong Li\thanks{Corresponding author}}
\affil[1]{School of Mathematics, Jilin University, Changchun 130012, People’s Republic of China.}
\affil[2]{Center for Mathematics and Interdisciplinary Sciences,	Northeast Normal University, Changchun 130024, People’s Republic of China.}
\allowdisplaybreaks

\begin{document}
	
	\maketitle
	\footnote{E-mail address: \url{yubq23@mails.jlu.edu.cn}(B. Yu),  \url{liyong@jlu.edu.cn}(Y. Li)}
	
\begin{abstract}
	This paper studies the effective stability of nearly integrable Hamiltonian PDEs with asymptotically vanishing spectral gaps ($0 < \alpha < 1$). Under a unified high-low frequency decomposition, we construct a modified block clustering partition based on Bourgain's ideas. By leveraging the vanishing of spectral gaps to suppress high-frequency resonant contributions, the overall non-resonance property is maintained under high-regularity weights. This framework is applied to space fractional and fully dispersive Whitham-Schr\"odinger equations, uniformly yielding explicit stability estimates in Gevrey, logarithmic ultra-differentiable, and Sobolev spaces.
	
	\noindent\textbf{Keywords:} Infinite-dimensional Hamiltonian system, Nekhoroshev stability, Fractional Schr\"odinger equation, Whitham-Schr\"odinger equation, Asymptotically vanishing frequencies.
\end{abstract}

\section{Introduction}
The phenomenon of energy transfer in Hamiltonian partial differential equations (PDEs) was regarded by Bourgain as one of the fundamental problems in the field for the 21st century \cite{B00}. Within this research context, two complementary directions have emerged: one explores the significant transfer of energy that leads to the growth of Sobolev norms and associated weak turbulence phenomena \cite{BL25,BLM22,GHP16,GHHMP23,HP15}; the other investigates the localization of energy, namely, whether energy can remain stable across various Fourier modes over long periods. The latter direction primarily encompasses KAM theory for PDEs, which proves the perpetual preservation of invariant tori (see \cite{KP96,B98,BBM14}), and the theory of effective stability, which provides lower bounds on the time scales of stability \cite{BCGW24,BFG20,BMP20,FG13,P18,CLW24,FM23,BFM24,BFLM25,BS13,D09,IP19}. The results of this paper fall into the latter category, belonging to the realm of effective stability, almost global existence, and Nekhoroshev-type stability for PDEs.

In the study of effective stability for Hamiltonian PDEs, the primary mathematical tool is the Birkhoff normal form method. The fundamental idea is that, provided the linear frequencies $\omega_j$ satisfy certain non-resonance conditions, one can eliminate the non-resonant terms in the perturbation by solving the homological equation. During this procedure, the key to establishing small divisor estimates lies in the distribution properties of the frequencies $\omega_j$. Previous results in this area typically rely on frequency separation (or spectral gaps) conditions, such as $|\omega_i-\omega_{j}|>C>0$ for $i\neq j$ (for instance, $\omega_j = j^2$ for the classical NLS, or $\omega_j = |j|$). 

We remark that for the regime of well-separated frequencies where $\omega_{j}\sim|j|^{\alpha},\alpha > 1$ (such as standard NLS or beam equation), we have successfully established almost global existence and Nekhoroshev-type stability across the same broad spectrum of regularities (including Gevrey, log-ultra-differentiable, and other ultra-differentiable class) in our previous work \cite{YL25}. However, when the frequency gaps vanish as the modes increase, the loss of frequency separation renders standard non-resonance conditions invalid. Consequently, research in the critical regime $0 < \alpha < 1$ has been relatively scarce; only \cite{MC20} has considered $\omega_k=(k^2+m)^{\beta}$ with $\beta<\frac{1}{2}$ for polynomial stability times. This paper is devoted to establishing a unified framework to overcome the challenge of these vanishing spectral gaps, yielding sharp sub-exponential and polynomial stability times comparable in regularity breadth to the $\alpha > 1$ case.

\subsection{Main Results}

In this paper, we consider nearly integrable Hamiltonian systems on sequence spaces of the form:
\begin{equation}\label{mainh}
	H(u, \bar{u}) = \sum_{j \in \mathbb{Z}^d} \omega_j(m) |u_j|^2 + P(u, \bar{u}),
\end{equation}
where $m > 0$ is a mass parameter, and the linear frequencies $\omega_j(m)$ satisfy certain growth and asymptotic difference properties (as formulated precisely under Assumption \ref{ass:freq} in Section \ref{sec:2}). Our primary applications focus on two physical models:
\begin{enumerate}
	\item The fully dispersive Whitham-Schr\"odinger (FD-WS) equation:
	\begin{equation}\label{main_whitham}
		\i\partial_t \psi = \sqrt{|D| \tanh(|D|) + m} \, \psi + p(|\psi|^2)\psi, \quad x \in \mathbb{T}^d, t \in \mathbb{R},
	\end{equation}
	where $D = -\i\partial_x$. The positive parameter $m$ acts as a mass or tuning parameter to avoid zero-frequency resonances, which is physically and mathematically analogous to the parameter tuning techniques in Hamiltonian water waves \cite{SHI24}.  Starting from the Euler equations of water waves, while the traditional nonlinear Schr\"odinger (NLS) equation introduces local approximations to the dispersion relation, the Whitham-Schr\"odinger formulation retains the full linear dispersion $\omega_j(m) = \sqrt{|j|\tanh(|j|) + m}$. As $|j| \to \infty$, the spectral gap satisfies $\omega_{j+1} - \omega_j \sim \frac{1}{2\sqrt{|j|}} \to 0$. Recently, substantial progress has been made in investigating the mathematical structures and wave dynamics of Whitham-type and full-dispersion equations, focusing on modulational instabilities, soliton existence, and qualitative behaviors (see, e.g., \cite{SHE18,SHI24}).

	\item The space fractional Schr\"odinger equation (fNLS):
	\begin{equation}\label{main}
		\i\partial_t \psi = (-\Delta + m)^\beta \psi + p(|\psi|^2)\psi, \quad x \in \mathbb{T}^d, t \in \mathbb{R},
	\end{equation}
	where the parameter $\beta$ is in the critical range $0 < \beta < 1/2$, and $p$ is a smooth real-valued function with $p(0)=0$. This model corresponds to linear frequencies $\omega_j(m) = (|j|^2 + m)^\beta$, where the spectral gap $\omega_{j+1} - \omega_j \sim |j|^{2\beta-1} \to 0$.
\end{enumerate}
In recent decades, substantial progress has been made in investigating the dynamical behaviors of the fractional Schr"odinger equation, including its solitary wave solutions, scattering properties, and special perturbation dynamics \cite{G19,GL22,GMPR26,HNS23,IP14,L02,LNZ25,LW24,N20}.

We quantify the regularity of solutions by utilizing a family of weighted Sobolev spaces, which will be formally defined in Section \ref{sec:2}: the Gevrey space $W_{G}^{s,\theta}$, the logarithmic ultra-differentiable space $W_{U}^{s,q}$, and the space of finitely differentiable functions $W_{F}^{s}$.

Throughout this paper, constants with numerical subscripts (e.g., $C_{1}, C_{2}, \dots, C_8$) depend only on the frequency parameter $\alpha$ (or $\beta$), the resonant set measure $\gamma$, and the regularity parameters $s, C_f$; their explicit values are detailed in Appendix A. We now state our main results as follows:

\begin{theorem}[Gevrey class case]
	Let $s > C_1$ and $\theta > 0$. For any interval $[m_1, m_2]$, there exist a set $\mathcal{M} \subset [m_1, m_2]$ of zero Lebesgue measure and a constant $\varepsilon_0 > 0$ such that for any $m \in [m_1, m_2] \setminus \mathcal{M}$, the following holds. If the initial data $\psi_0$ for \eqref{main_whitham} or \eqref{main} satisfies  $\|\psi_0\|_{s,\theta}^G = \varepsilon < \varepsilon_0$, then the corresponding solution $\psi(t)$ satisfies
	\[
	\|\psi(t)\|_{s,\theta}^G \leq C_{2}\varepsilon, \quad \forall |t| \leq C_{3}\exp\left(C_{4}\frac{|\ln \varepsilon|^{\frac{7}{6}}}{\ln|\ln \varepsilon|^{\frac{1}{6}}}\right).
	\]
\end{theorem}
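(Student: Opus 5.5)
The plan is a Bourgain-type rational normal form in which the truncation order $r$ and the cutoff $N$ are tuned to $\varepsilon$. Rewrite \eqref{main_whitham} and \eqref{main} in Fourier variables as the Hamiltonian \eqref{mainh}, with $P$ a sum of homogeneous polynomials of degree $\ge 4$ from $p(|\psi|^2)\psi$. Check that both $\omega_j(m)=\sqrt{|j|\tanh|j|+m}$ and $(|j|^2+m)^\beta$, with $0<\beta<1/2$, fall under Assumption \ref{ass:freq} with some $0<\alpha<1$. Then work in the Gevrey weighted space $W^{s,\theta}_G$, where the weight is $\langle j\rangle^{2s}\e^{2\theta|j|^{\cdot}}$. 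The first point to verify is that the polynomial vector fields are bounded on this space, i.e.\ the algebra property of the weight. This gives tame estimates for Poisson brackets of monomials whose constants depend on the degree at most factorially.

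The next step is the non-resonance estimate. For a cutoff $N$, split the modes into low modes $|j|\le N$ and high modes $|j|>N$. Remove a zero-measure set $\mathcal M$ of masses; concretely, $\mathcal M$ is a $\limsup$ of the sets where the small-divisor bound fails with parameter $\gamma$, handled by Borel–Cantelli. For the remaining $m$ we obtain
\[
\Big|\sum_{l} \sigma_l\,\omega_{j_l}(m)\Big|\ge \gamma\, N^{-\tau r}
\]
for monomials of degree $\le r$ having at most a bounded number of high modes and not of the form $|u_j|^2$ products. The measure estimate uses the analyticity and monotonicity of $\omega_j$ in $m$, via a Vandermonde-type lower bound on $m$-derivatives. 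For high modes the spectral gaps vanish, so the divisor cannot be controlled pointwise. Instead, following Bourgain, cluster the high modes into blocks $\{j: \omega_j\in [k,k+1)\}$, or dyadic blocks whose diameter is controlled by $|j|^{1-\alpha}$. Then declare the normal form to consist of monomials whose high modes pair up within the same block. Such terms still preserve the block-wise super-actions $\sum_{j\in \text{block}}|u_j|^2$, which is all we need for norm control.

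The iterative Birkhoff step proceeds as usual: apply $r$ successive Lie transforms, each removing the non-normal-form terms of degree $\le r+2$ with divisors $\gtrsim N^{-\tau r}$. The output is $H\circ\Phi=H_0+Z+R_{\mathrm{low}}+R_{\mathrm{high}}$ with the following bounds:
\begin{itemize}
\item $|X_{R_{\mathrm{low}}}|\lesssim (C r^{c}N^{\tau})^{r}\varepsilon^{r+1}$;
\item the high-frequency remainder, consisting of terms with at least three modes above $N$, satisfies $\lesssim \varepsilon^{3}\e^{-\theta N^{\cdot}}$, using the Gevrey weight to gain decay from high modes.
\end{itemize}
Then estimate $\frac{\d}{\d t}\|\psi\|^2$ via the super-actions. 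The resulting stability time is $T\sim \min\{(C r^c N^\tau\varepsilon)^{-r},\ \varepsilon^{-2}\e^{\theta N^{\cdot}}\}$. Optimizing gives the exponent: $N\sim|\ln\varepsilon|^{a}$, $r\sim |\ln\varepsilon|^{b}/\ln|\ln\varepsilon|^{c}$. Balancing $r\ln(N^\tau r^c)\approx r\ln|\ln\varepsilon|$ against $r|\ln\varepsilon|$ and $N^{\cdot}$ produces $\exp(C|\ln\varepsilon|^{7/6}/\ln|\ln\varepsilon|^{1/6})$, with the specific powers dictated by the block-size exponent in the clustering.

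The main obstacle is the clustering and non-resonance step with vanishing gaps. One must show two things. First, monomials with high modes lying in different blocks have small divisors bounded below polynomially in $N$, or else carry enough high modes to be absorbed into $R_{\mathrm{high}}$. Second, the blocks are small enough that the $s>C_1$ polynomial weight controls intra-block exchange. I would first try Bourgain's argument: two high modes in the same resonant chain must have comparable $|j|$, since $\omega$ is increasing. Then use $\omega_{j}-\omega_{j'}\approx |j|^{\alpha-1}(|j|-|j'|)$ to bound the block diameter, and accept the extra factor this contributes to $\tau$.
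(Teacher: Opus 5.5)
Your overall architecture matches the paper's: splitting at $N$, a Diophantine condition on low modes from the Vandermonde nondegeneracy, a Lie-transform iteration, a remainder with at least three high modes gaining $\e^{-(s-s_0)f(N)}$, and block-wise $\ell^2$ conservation for the high modes. The gap is the step you yourself call the main obstacle: a lower bound on divisors of monomials whose two opposite-sign high modes lie in different blocks. This is exactly what has to be proved, and you leave it open.

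The clustering you suggest cannot supply it, for two reasons.
\begin{itemize}
\item \emph{No separation.} Frequency windows $\{\omega_j\in[k,k+1)\}$ do not separate blocks. Radially adjacent indices on either side of a window boundary satisfy $|\omega_j-\omega_{j'}|\lesssim|j|^{\alpha-1}\to0$.
\item \emph{No Gevrey control.} These windows have radial width $\sim|j|^{1-\alpha}$. Across such a block the weight $\e^{2s\langle j\rangle^\theta}$ varies by a factor of order $\exp(c\,s|j|^{\theta-\alpha})$, which is unbounded for $\theta>\alpha$. So conserved block super-actions do not bound $\|\cdot\|^G_{s,\theta}$; your appeal to a ``polynomial weight'' only fits the Sobolev case.
\end{itemize}

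The paper keeps blocks of uniformly bounded radial width $C_1^*$. Lemma~\ref{weight} then bounds the intra-block weight variation by the fixed factor $\e^{2sC_ff(C_1^*)}$. The separation is taken from the low-frequency part, not from the clustering. The estimate $\omega_j-\omega_{j'}\approx|j|^{\alpha-1}(|j|-|j'|)$ you mention at the end is the right ingredient, but it is used to make the high pair negligible, not to size blocks:
\begin{itemize}
\item Momentum conservation gives $|j_1-j_2|\le(d-2)N$, hence $|\omega_{j_1}-\omega_{j_2}|\le(\alpha(d-2)N+2C_{\mathtt{rem}})|j_2|^{\alpha-1}$.
\item Once $|j_2|\ge N^{4d^3/(1-\alpha)}$, this is negligible against the Diophantine bound $\gamma N^{-4(d-2)^3}$ for $\sum_{l\neq1,2}\sigma_l\omega_{j_l}$.
\item Below this threshold every mode lies under $N^{5d^3/(1-\alpha)}$, and \eqref{nr} is applied at that enlarged scale.
\end{itemize}

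Be aware that this argument needs the low-frequency sum to be nonzero. Since $\omega_j$ depends only on $|j|$, a low factor $u_k\bar u_{k'}$ with $|k|=|k'|$, $k\neq k'$, contributes zero. In $u_{j_1}\bar u_{j_2}u_k\bar u_{k'}$ with $j_2=j_1+k-k'$, the divisor is then $\omega_{j_1}-\omega_{j_2}\to0$ as $|j_1|\to\infty$. So your claimed uniform bound $\gamma N^{-\tau r}$ for every monomial other than products of actions is false as stated. Such terms can neither be kept inside bounded blocks nor absorbed into $R_{\mathrm{high}}$, which requires three high modes. A complete proof must treat them separately.

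The exponent is not derived either: $7/6$ and $1/6$ have nothing to do with a block-size exponent. They come from the per-step loss $N^{C_5d^6}$ in Lemma~\ref{homo}. The sixth power arises in case NR22 from composing the Diophantine exponent $4d^3$ with the enlarged scale $N^{5d^3/(1-\alpha)}$. Here $d$ is the normal-form degree, your $r$. Balancing the remainder $\varepsilon^{d-2}N^{3C_5d^7}$ against $\e^{-(s-s_0)N^\theta}$ through $d^6\ln N=N^\theta/d=|\ln\varepsilon|/2$ gives $d\sim(|\ln\varepsilon|/\ln|\ln\varepsilon|)^{1/6}$ and $N^\theta\sim|\ln\varepsilon|^{7/6}/(\ln|\ln\varepsilon|)^{1/6}$.

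A divisor bound $\gamma N^{-\tau r}$ with $\tau$ fixed would instead give $\exp(C|\ln\varepsilon|^2/\ln|\ln\varepsilon|)$. However, neither the measure estimates of Lemmas~\ref{det}--\ref{measure2} nor the NR22 rescaling give an exponent linear in the degree. Your remainder $(Cr^cN^{\tau})^r\varepsilon^{r+1}$ is also inconsistent with that divisor bound. Minor point: the paper's Gevrey norm is $\sum_j|u_j|^2\e^{2s\langle j\rangle^\theta}$, not a weight $\langle j\rangle^{2s}\e^{2\theta|j|^{\cdot}}$.
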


Previous works, such as \cite{BMP20,FG13,P18}, typically established stability times of the form $\exp(C|\ln\varepsilon|^{1+a})$ for some $a>0$. Our result provides a more precise estimate, featuring a logarithmic correction in the exponent.

\begin{theorem}[Logarithmic ultra-differential case]
	Let $s > C_1$ and $q > 0$. For any interval $[m_1, m_2]$, there exist a set $\mathcal{M} \subset [m_1, m_2]$ of zero Lebesgue measure and a constant $\varepsilon_0 > 0$ such that for any $m \in [m_1, m_2] \setminus \mathcal{M}$, the following holds. If the initial data $\psi_0$ for \eqref{main_whitham} or \eqref{main} satisfies  $\|\psi_0\|_{s,q}^U = \varepsilon < \varepsilon_0$, then the corresponding solution $\psi(t)$ satisfies
	\[
	\|\psi(t)\|_{s,q}^U \leq C_{2}\varepsilon, \quad \forall |t| \leq C_{3}\exp\left(C_{4}|\ln\varepsilon|^{\frac{7q}{6q+1}}\right).
	\]
\end{theorem}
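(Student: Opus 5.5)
The plan follows the scheme behind the Gevrey theorem, run with the logarithmic weight $w_j=e^{s\ln^q\langle j\rangle}$ (up to the exact normalization of $W^{s,q}_U$). The first step is the nonresonance estimate. Outside a zero-measure set $\mathcal M$ of masses, obtained by a Borel--Cantelli argument over the resonant sets of measure $\gamma$, every monomial $u^{k}\bar u^{l}$ of degree $r$ satisfies
\[
|\omega\cdot(k-l)|\ge \gamma\,\big(N^{*}\big)^{-C r^{2}}
\]
once we pass to the modified block clustering partition. Here $N^*$ is the third largest index, the high modes are grouped into clusters, and vanishing gaps ($0<\alpha<1$) let the two highest modes be absorbed into a common block. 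Monomials whose high modes lie in the same block and are balanced are declared resonant; they commute with the block super-actions $J_B=\sum_{j\in B}|u_j|^2$.

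The second step is a Birkhoff normal form in the class of Hamiltonians bounded on $W^{s,q}_U$. I fix a low/high frequency cutoff $N$ and a normalization order $r$. For $r$ steps I remove the non-resonant monomials containing at most two modes above $N$. For the homological equation I use the small divisor bound above with $N^*\le N$, which costs $N^{Cr^2}$ per step. For monomials with at least three modes above $N$, I use the ultra-differentiable weight instead. The tame estimate gains a factor $e^{-s(\ln^q N)\cdot c}$ for each extra high mode; because $\ln^q$ is concave this gain is only sub-exponential in $\ln N$. I expect this to be exactly where the exponent $q$ enters.

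The third step gives the remainder size $R\lesssim \varepsilon^{3}\big((CN^{Cr^2}\varepsilon)^{r}+e^{-c s\ln^{q}N}\big)$. The normal form part controls the block super-actions, and the block partition has bounded block size, so the $W^{s,q}_U$ norm stays comparable to $\varepsilon$ (with constant $C_2$). A standard bootstrap then gives stability up to times $|t|\lesssim \varepsilon^{2}/R$.

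The last step is the optimization. I take $\ln N\sim |\ln\varepsilon|^{a}$ and $r\sim |\ln\varepsilon|^{b}$. The first term in $R$ is small provided $r^{2}\ln N\lesssim |\ln\varepsilon|$, i.e. $2b+a\le 1$, and it then contributes $\exp(-c\,r|\ln\varepsilon|)=\exp(-c|\ln\varepsilon|^{1+b})$. The second term contributes $\exp(-c|\ln\varepsilon|^{aq})$.

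\textbf{Main obstacle.} The stated exponent $\frac{7q}{6q+1}$ does not come out of this naive balance, so the real clustering estimate must give a different dependence of the divisor loss on $r$ and on the block scale. I expect the loss to look like $N^{C r^{2}}$ paired with a block scale $N^{1/6}$-type factor coming from the gap decay (this is the source of the $7/6$ in the Gevrey theorem). The hardest part is therefore proving the block clustering nonresonance with the precise exponents, uniformly for the FD-WS and fNLS frequencies under Assumption~\ref{ass:freq}. My first attempt would be to redo the Gevrey optimization with $\ln^{q}N$ in place of $N^{\theta}$, keeping every other exponent fixed, and then solve $1+b=aq$ together with the divisor constraint. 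With the same relation that produces $7/6$, this should give $|\ln\varepsilon|^{7q/(6q+1)}$.
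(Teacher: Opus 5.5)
Your architecture matches the paper's: cutoff $N$, bounded-width blocks, normalizing only monomials with at most two high modes, a truncation remainder $\e^{-(s-s_0)(\ln N)^q}$ for three or more high modes, and block-wise $\ell^2$ conservation plus bounded block width to control $w^>$. But there is a genuine gap exactly where the exponent is decided.

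The divisor bound $\gamma (N^*)^{-Cr^2}$ is asserted, not proved, and the vanishing-gap mechanism does not produce it. Opposite-sign pairs in the same block are resonant. Bounded-width blocks cannot absorb pairs with $|j_1-j_2|$ as large as $dN$. So the dangerous case is two opposite-sign high modes in \emph{different} blocks (case NR22). Write $d$ for the degree (your $r$). There $0<|\omega_{j_1}-\omega_{j_2}|\lesssim dN|j_2|^{\alpha-1}$, which a nonzero low-mode sum $\geq\gamma N^{-\tau}$ dominates once $|j_2|\gtrsim N^{(\tau+1)/(1-\alpha)}$. Below that threshold every mode lies below $N^{C\tau}$, so you must apply the Diophantine estimate at that enlarged scale and lose $N^{C\tau^2}$. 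An $N^{Cd^2}$ loss would need $\tau$ linear in $d$, which the measure argument does not provide. In the paper $\tau=4d^3$, so NR22 costs $\gamma N^{-20d^6/(1-\alpha)}$. This gives the loss $N^{C_5d^6}$ in Lemma \ref{homo} and the remainder $|R_{d,d}|\lesssim\varepsilon^{d-2}N^{3C_5d^7}$ after $d$ steps.

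This $d^6$, not a block-scale $N^{1/6}$ factor, is the origin of the $7/6$. The same holds in the Gevrey case, where $d\sim(|\ln\varepsilon|/\ln|\ln\varepsilon|)^{1/6}$. In your parametrization $\ln N\sim|\ln\varepsilon|^a$, $d\sim|\ln\varepsilon|^b$:
\begin{itemize}
\item the remainder $(\varepsilon N^{Cd^6})^d$ forces $a+6b\le 1$;
\item balancing $1+b=aq$ gives $b=(q-1)/(6q+1)$ and $a=7/(6q+1)$;
\item hence stability holds up to $\exp(c|\ln\varepsilon|^{7q/(6q+1)})$.
\end{itemize}
The paper does the same by imposing $d^6\ln N=(\ln N)^q/d$ and $|\ln\varepsilon|=2(\ln N)^q/d$. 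Then $d^7=(\ln N)^{q-1}$, $|\ln\varepsilon|\sim d^{(6q+1)/(q-1)}$ and $(\ln N)^q\sim|\ln\varepsilon|^{7q/(6q+1)}$, with time $C_3\e^{C_4(\ln N)^q}$. This requires $q>1$ (so that $b>0$), the range in which $W^U_{s,q}$ is defined.

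The mismatch you flagged is not the real obstacle. With an $N^{Cd^2}$ loss, your own balance ($a+2b=1$, $1+b=aq$) gives the exponent $3q/(2q+1)$. That exceeds $7q/(6q+1)$ for $q>1$, so it would imply the statement a fortiori. What is missing is a proof of the divisor bound. With the loss the clustering argument actually delivers, the exponent has to be computed as above rather than transplanted from the Gevrey case.
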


Recent works, e.g., \cite{CLW24}, have shown that certain Hamiltonian systems with log-ultra-differentiable regularity exhibit stability over time scales of the form $\exp(C|\ln\varepsilon|^a)$. Our theorem establishes a corresponding result for Hamiltonian PDEs in the critical regime of asymptotically vanishing spectral gaps.

\begin{theorem}[Finite-differentiable case]
	Let $s > C_1$. For any interval $[m_1, m_2]$, there exist a set $\mathcal{M} \subset [m_1, m_2]$ of zero Lebesgue measure and a constant $\varepsilon_0 > 0$ such that for any $m \in [m_1, m_2] \setminus \mathcal{M}$, the following holds. If the initial data $\psi_0$ for \eqref{main_whitham} or \eqref{main} satisfies  $\|\psi_0\|_{s}^F = \varepsilon < \varepsilon_0$, then the corresponding solution $\psi(t)$ satisfies
	\[
	\|\psi(t)\|_{s}^F \leq C_{2}\varepsilon, \quad \forall |t| \leq C_{3}\left(\frac{1}{\varepsilon}\right)^{\frac{2C_{4}}{3}s^{1/7}}.
	\]
\end{theorem}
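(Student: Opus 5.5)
The proof of the finite-differentiable case follows the same Birkhoff normal form scheme as the Gevrey and log-ultra-differentiable cases. The only change is the choice of the truncation parameters adapted to the polynomial weight $\langle j\rangle^{s}$ of $W_F^s$. We first expand $p(|\psi|^2)\psi$ in Fourier coordinates, which gives $H$ of the form \eqref{mainh}. The frequencies $\omega_j(m)=(|j|^2+m)^\beta$ or $\sqrt{|j|\tanh|j|+m}$ satisfy Assumption \ref{ass:freq} with $\alpha=2\beta$ or $\alpha=1/2$. The non-resonance estimate is obtained by excluding a null set $\mathcal M$ of masses. For $m\notin\mathcal M$, the small divisors $|\sum_i\sigma_i\omega_{j_i}(m)|$ are bounded below by $\gamma N^{-\tau}$ for monomials of degree $r$ whose modes lie below a low/high threshold $N$. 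These small divisors are controlled on the modified block clustering partition, where blocks have diameter polynomial in the lowest mode and the vanishing gaps $\omega_{j+1}-\omega_j\sim|j|^{\alpha-1}$ are absorbed.

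\textbf{Step 1: high/low splitting and tame vector field estimates.} We split every monomial according to how many of its modes exceed $N$. Monomials with at least three high modes, i.e.\ $|j_3|\ge N$ after ordering, carry a factor $N^{-(s-C_1)}$ in the $W_F^s$ vector field norm. This factor comes from the weight $\langle j\rangle^s$ distributed over at least two large indices, so these terms can be left in the remainder. Monomials with at most two high modes are either block-resonant, in which case they preserve the block super-actions, or non-resonant. In the non-resonant case, the vanishing gaps make the divisor effectively bounded below, by the clustering lemma of the paper.

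\textbf{Step 2: iterative normal form.} We perform $r$ Lie transforms with generating functions that solve the homological equation on the low/non-resonant part. Each step costs a factor $N^{\tau}$ in the estimates and contracts by $\varepsilon$. After $r$ steps we reach
\[
H\circ\Phi = H_0 + Z + R, \qquad \|X_R\| \lesssim \varepsilon\,(C N^{\tau}\varepsilon)^{r} + \varepsilon\,N^{-(s-C_1)}.
\]
The normal form part $Z$ commutes with the block super-actions $\sum_{j\in\text{block}}\langle j\rangle^{2s}|u_j|^2$. A bootstrap argument then shows that $\|\psi(t)\|_s^F\le C_2\varepsilon$ for $|t|\lesssim \varepsilon/\|X_R\|$.

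\textbf{Step 3: optimization (main obstacle).} We take $N=\varepsilon^{-a}$ and choose $r$ and $a$ to balance the two error terms. The first term is of size $\varepsilon^{r(1-a\tau)}$ and the second of size $\varepsilon^{a(s-C_1)}$. The difficulty is that both the divisor exponent $\tau$ and the combinatorial constants grow with $r$. In the clustering argument with vanishing gaps, the block diameters grow polynomially in $r$ with a large power, so that $\tau\sim r^{6}$ or similar; this growth is the source of the exponents $7/6$ and $1/7$ in the three theorems. I would first fix this dependence $\tau=\tau(r)\approx C r^{6}$, taking it from the paper's non-resonance lemma. The optimal choice is then $r\sim s^{1/7}$ and $a\sim s^{-6/7}$. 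With these choices both error exponents become of order $s^{1/7}$, which gives the time $\varepsilon^{-\frac{2C_4}{3}s^{1/7}}$. The condition $s>C_1$ is needed so that $r\ge 1$ and $a\tau<1$. The hardest part is verifying uniformly in $r$ that the block clustering keeps the constants in Step 2 bounded by $C^r N^{\tau(r)}$ in the weaker polynomial topology. This matters because, unlike the Gevrey case, the weight gives no exponential decay to absorb combinatorial losses. I would handle this using the tame (Sobolev-algebra) estimates with $s>C_1>d/2$, which gives loss only polynomial in $N$.
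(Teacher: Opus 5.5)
Your architecture matches the paper's: a $d$-step Birkhoff normal form (your $r$) whose divisors cost $N^{C_5 d^6}$ (Lemma~\ref{homo}), and a truncation remainder of size $N^{-(s-s_0)}$ from monomials with at least three high modes (Lemma~\ref{cut}). It then balances $(rN^{d^6})^d$ against $N^{-2s}$, with the paper's radius $r$ playing the role of your $\varepsilon$, and optimizes $d\sim s^{1/7}$.

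Your account of where $\tau\sim d^6$ comes from is inaccurate. In the paper the blocks $\Omega_\alpha$ have radial width bounded by a constant $C_1^*$ independent of $d$. The $d^6$ arises in case NR22: the Diophantine gap $\gamma N^{-4d^3}$ of the low modes dominates $|\omega_{j_1}-\omega_{j_2}|\lesssim dN|j_2|^{\alpha-1}$ only once $|j_2|\ge N^{4d^3/(1-\alpha)}$. Below that threshold the Diophantine bound is applied at the enlarged scale $N^{5d^3/(1-\alpha)}$, which gives $\gamma N^{-20d^6/(1-\alpha)}$.

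The step that fails as written is your control of the high modes. Take an R2 term $w_{j_1}\bar w_{j_2}\cdot(\text{low modes})$ with $j_1\neq j_2$ in the same block and $|j_1|\neq|j_2|$. It does not Poisson-commute with $\sum_{j\in\Omega_\alpha}\langle j\rangle^{2s}|w_j|^2$, because the weights at $j_1$ and $j_2$ differ. It commutes only with the unweighted block mass $\|w_\alpha\|_{\ell^2}^2$.

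The paper uses exactly this. The high-mode equation is linear in $w^>$ with a block-diagonal Hamiltonian generator $\Lambda+\mathcal L_\alpha(t)$, so each $\|w_\alpha\|_{\ell^2}$ is conserved. The weight varies by at most a fixed factor across a block, so the block propagator satisfies $\|\mathcal W(t,\tau)\|_{s\to s}\le C_8$. Duhamel with $X_{R_d}$ then gives $\|w^>(t)\|_s\le C_8(\|w^>(0)\|_s+r\e^{-C_4 f(N)}|t|)$.

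You can repair your argument the same way: replace $\langle j\rangle^{2s}$ by the block-constant weight $|\alpha|^{2s}$, where $|\alpha|:=\inf_{j\in\Omega_\alpha}|j|$. The resulting quantity is exactly conserved by $Z_>$ and equivalent to $\|w^>\|_s^2$ up to $C_8$. This also works for your sublinear-diameter blocks, since $\sup_{\Omega_\alpha}\langle j\rangle^{2s}/\inf_{\Omega_\alpha}\langle j\rangle^{2s}$ stays uniformly bounded. This unavoidable constant is why the conclusion reads $C_2\varepsilon$.

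Your proposal also does not address the low modes. The paper handles them separately, using the fact that the R0 part $Z_0$ commutes with the low-mode actions, $\{|w_j|^2,Z_0\}=0$. With these corrections your bootstrap coincides with the paper's proof.
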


In \cite{MC20}, the authors also obtained polynomial stability times of the form $(1/\varepsilon)^{a(s)}$ for some function $a(s)$ in the same regime $\beta < 1/2$. Our result provides an explicit expression for the exponent, namely $a(s) = \frac{2C_{4}}{3}s^{1/7}$.

\subsection{Main Techniques and Contributions}

In this paper, we employ a unified framework to establish long-time norm preservation for solutions originating from several function spaces: the Gevrey class ($W^G_{s, \theta}$), the logarithmic ultra-differentiable space ($W^U_{s, q}$), and the space of finitely differentiable functions ($W^F_s$). Our strategy is inspired by the classical Nekhoroshev theorem and is based on a high-low frequency decomposition. Specifically, we construct a normal form for the low-frequency part of the system and show that the resulting error terms can combine with the high-frequency part.

The core challenge of our study stems from the asymptotic properties of the frequencies. Specifically, as $|j| \to \infty$, the spacing between adjacent frequencies vanishes, i.e., $\omega_{j+1} - \omega_j \to 0$. This loss of frequency separation invalidates the standard non-resonance conditions that are the cornerstone of most KAM and Nekhoroshev-type theories.

To address this challenge, we partition the frequency space into blocks $\Omega_{\alpha}$ in order to classify resonant interactions. This partitioning is conceptually inspired by Bourgain's approach on the ``localization of resonant sites'' (see, e.g., \cite{B99, BFM24}), which partitions the lattice $\mathbb{Z}^d = \bigcup_\alpha \Omega_\alpha$ into clusters to handle close-resonance wave couplings \cite{B99}. While the classical Bourgain clustering lemma is based on the quadratic growth of the spectrum (such as the standard Laplacian spectrum with $\omega_j = |j|^2$), the vanishing of spectral gaps in the regime $0 < \alpha < 1$ requires a different adaptation \cite{BFM24}. To satisfy the requirements of norm estimation in high-regularity settings, we modify Bourgain's block clustering partition method to suit our setting of asymptotically vanishing spectral gaps with high-regularity weights. 

Specifically, our approach to handling the vanishing spectral gaps is to utilize the property that they tend to zero: as the spectral gap becomes very small compared to the low-frequency non-zero combinations, the whole denominator are bounded away from zero by the Diophantine conditions on the low modes. Combining this mechanism with the rapid decay of the high-frequency Fourier modes under high regularity, we control the block interactions while recovering a weak non-resonance condition for the block interactions. Crucially, during our normal form iteration, our estimates track the dependence on the truncation scale $N$, allowing the remainder of the iteration to be balanced against the high-frequency tail of the solution. This yields explicit expressions for the stability times.

Furthermore, we highlight that the stability time results achieved under this unified framework represent a significant advancement in terms of mathematical sharpness and generality compared to existing results in the literature. For analytic-like and Gevrey classes, previous works such as \cite{BMP20, FG13, P18} established sub-exponential stability bounds of the form $\exp(C|\ln\varepsilon|^{1+a})$. Our Theorem 1 improves upon these bounds to $\exp(C\frac{|\ln\varepsilon|^2}{\ln|\ln\varepsilon|})$. In the case of logarithmic ultra-differentiable regularity, our Theorem 2 successfully establishes stability times of the form $\exp(C|\ln\varepsilon|^{\frac{7q}{6q+1}})$, extending recent results \cite{CLW24} to the critical regime $0 < \alpha < 1$ with vanishing spectral gaps. Finally, for finitely differentiable (Sobolev) spaces, while earlier investigations in the same asymptotic frequency regime \cite{MC20} yielded polynomial stability times of the generic form $(1/\varepsilon)^{a(s)}$ for some non-explicit exponent function $a(s)$, our Theorem 3 provides a  explicit expression for the exponent, namely $a(s) = \frac{2C_{4}}{3}s^{1/7}$. This comprehensive coverage of regularities within a single robust framework underscores the adaptability of our proposed methodology.

\subsection{Article Structure }

The outline of the paper is as follows. In Section \ref{sec:2}, we introduce the setting of Hamiltonian, function spaces and polynomials on the function spaces. In Section \ref{sec:3}, we define the resonant and nonresonant set to derive the definition of norm form. In Section \ref{sec:4}, we prove lemma to estimate the solution of homological equation and prove the iteration lemma to transform the origin Hamiltonian $H$ to $H\circ\mathcal{T}^{(d)}=H_0+Z_d+R_{d,d}+R_{d,>}.$ In Section \ref{sec:5}, we make terms $R_{d,d}$ and $R_{d,>}$ have the same order, which is expressed by $r$. In Section \ref{sec:6}, we estimate the stability time of the high and low frequency parts respectively to get the conclusion. In Section \ref{sec:7}, we verify the applicability of our framework to the fully dispersive Whitham-Schr\"odinger equation and the fractional NLS equation.
	
	\section{Setting}\label{sec:2}
	
	We denote the index set by $\mathcal{Z} := \mathbb{Z}^\mathsf{d}\times\{-1,1\}$. For an index $J=(j,\sigma)\in\mathcal{Z}$, we define $|J|^2 := |j|^2 = \sum_{l=1}^{\mathsf{d}}|j_l|^2$ and $\langle j\rangle := \max\{1, |j|\}$.
	
	We consider nearly integrable Hamiltonian systems on sequence space of the form
	\begin{equation}\label{Hamiltonian}
		H=H_0+P, \quad \text{with} \quad H_0=\sum_{j\in\mathbb{Z}^\mathsf{d}}\omega_{j}(m)|u_j|^2,
	\end{equation}
	where $m \in [m_1, m_2]$ is a mass parameter. To encompass both the fully dispersive Whitham-Schr\"odinger equation and the fractional NLS equation, we introduce the following assumption on the linear frequencies:
	
	\begin{assumption}\label{ass:freq}
		The linear frequencies $\omega_j(m)$ are smooth in the mass parameter $m \in [m_1, m_2]$. They satisfy the following properties:
		\begin{enumerate}
			\item \textbf{Asymptotic Growth}: For $|j| \to \infty$, they satisfy the following asymptotic behavior:
			$$ \omega_j(m) = |j|^\alpha + r_m(j), \quad \text{with } 0 < \alpha < 1, $$
			where there exists a constant $C_{\mathtt{rem}} > 0$ such that the remainder term $r_m(j)$ satisfies:
			$$ \left| r_m(j) \right| \leq C_{\mathtt{rem}} \langle j \rangle^{\alpha-1}$$
			for any $j \in \mathbb{Z}^{\mathsf{d}}$ and $m \in [m_1, m_2]$.
			\item \textbf{Non-degeneracy}: For any $1 \leq k \leq d$, any compact subinterval $[m_1, m_2]$, and any distinct indices $j_1, j_2, \dots, j_k \in \mathbb{Z}^{\mathsf{d}}$ satisfying $|j_s| < N$ ($s=1,\dots,k$), the determinant of the frequency derivative matrix satisfies:
			$$ \left| \det \left[ \frac{\partial^l \omega_{j_s}(m)}{\partial m^l} \right]_{0 \leq l \leq k-1; 1 \leq s \leq k} \right| \geq \frac{C_{\mathtt{nd}}}{N^{2d^2}} $$
			for some constant $C_{\mathtt{nd}} > 0$ independent of $N$.
		\end{enumerate}
	\end{assumption}
	
	The term $P(u,\bar{u})$ is the representation of the nonlinear interaction in Fourier coordinates, which is assumed to belong to $\mathcal{P}_{4,\infty}$.
	
	Our analysis is conducted in the following family of infinite-dimensional Banach spaces. We expand the wave function $\psi(x,t) \in L^2(\mathbb{T}^{\mathsf{d}})$ via its Fourier series:
	$$ \psi(x,t) = \sum_{j \in \mathbb{Z}^{\mathsf{d}}} u_j(t) \e^{\i j \cdot x}, $$
	and we associate $\psi$ with the sequence $u = (u_J)_{J \in \mathcal{Z}} \in W_s$. For any index $J = (j, \sigma) \in \mathcal{Z}$, the sequence components are defined as $u_{(j,1)} := u_j$ and $u_{(j,-1)} := \bar{u}_j$. Given a weight function $f$, we define the space $W_s$ as
	$$W_s:=\left\{u=(u_J)_{J\in\mathcal{Z}} \mid \|u\|_s < \infty \right\},$$
	endowed with the squared norm
	$$\|\psi\|_s^2 := \| u\|_s^2 = \sum_{j\in\mathbb{Z}^{\mathsf{d}}}|u_j|^2\e^{2sf(\langle j\rangle)}.$$
	\begin{remark}
		For $|j|$ large enough, $\langle j \rangle=|j|$. Consequently, in the subsequent analysis, we will often replace $f(\langle j\rangle)$ with $f(|j|)$ for sufficiently large $|j|$ without loss of generality.
	\end{remark}
	The specific choices of the weight function $f$ define the spaces considered in our main theorems:
	\begin{enumerate}
		\item For $f(x)=x^\theta$ with $0<\theta<1$, we obtain the Gevrey space, denoted by $W^G_{s,\theta}$.
		\item For $f(x)=(\ln x)^q$ with $q>1$, we obtain the logarithmic ultra-differentiable space, denoted by $W^U_{s,q}$.
		\item For $f(x)=\ln x$, the weight becomes $\exp(2s \ln \langle j \rangle) = \langle j \rangle^{2s}$. This yields the standard Sobolev space $H^s$, which we denote by $W_{s}^F$.
	\end{enumerate}
	The analysis presented up to the Normal Form Lemma will be carried out in the abstract weighted space $W_s$.
	
	We denote the open ball in $W_s$ centered at the origin with radius $r$ by $B_s(r)$. A functional $H$ on $W_s$ determines a Hamiltonian vector field $X_H=(X_J)_{J \in \mathcal{Z}}$ whose components are given by
	$$\dot{u}_{J} = X_J(u) := -\sigma \i\frac{\partial H}{\partial u_{\bar{J}}}, \quad \text{where } J=(j,\sigma) \text{ and } \bar{J}=(j,-\sigma).$$
	For a monomial $M=\prod_{l=1}^{d}u_{J_l}$ with multi-index $\mathcal{J}=(J_1,...,J_d)$ where $J_l=(j_l,\sigma_l)$, we define its momentum as $\mathcal{M}(\mathcal{J}) := \sum_{l=1}^{d}\sigma_lj_l.$
	The locality of the nonlinear interaction in \eqref{main_whitham} and \eqref{main} implies that the perturbation term $P$ conserves momentum. This means that $P$ is a sum of monomials whose multi-indices $\mathcal{J}$ satisfy $\mathcal{M}(\mathcal{J})=0$. We denote the set of such momentum-zero multi-indices of degree $d$ by $\mathcal{I}_d$.
	
	For a homogeneous polynomial $P$ of degree $d$, we write
	\begin{equation}\label{P}
		P(u)=\sum_{\mathcal{J} \in \mathcal{Z}^d} P_{\mathcal{J}} u^{\mathcal{J}}, \quad \text{where } \mathcal{J}=(J_1, \dots, J_d) \text{ and } u^{\mathcal{J}} = u_{J_1}\dots u_{J_d}.
	\end{equation}
	\begin{definition}\label{defpol}
		For $d\geq 2$, we denote by $\mathcal{P}_{d}$ the space of formal homogeneous polynomials $P(u)$ of degree $d$ of the form \eqref{P} satisfying:
		\begin{enumerate}
			\item Momentum conservation: $P_{\mathcal{J}} = 0$ if $\mathcal{M}(\mathcal{J}) \neq 0$. That is, $P(u)=\sum_{\mathcal{J}\in\mathcal{I}_d}P_{\mathcal{J}}u^{\mathcal{J}}$.
			\item Reality: $P$ is real-valued, which implies $\overline{P_{\mathcal{J}}}=P_{\overline{\mathcal{J}}}$ for any $\mathcal{J}\in\mathcal{Z}^d$.
			\item Boundedness: The coefficients are uniformly bounded, i.e., $C_P:=\sup_{\mathcal{J}\in\mathcal{I}_d}|P_{\mathcal{J}}|<\infty.$
		\end{enumerate}
	\end{definition}
	For $r,s>0$, we endow the space $\mathcal{P}_d$ with the norm
	$$|P|_{r,s}:=\sup_{u\in B_s(r)}\frac{\Vert X_{P}\Vert_s}{r}.$$
	For integers $\infty> d_2\geq d_1\geq 2$, we let $\mathcal{P}_{d_1,d_2}:=\bigoplus_{k=d_1}^{d_2}\mathcal{P}_k$. Similarly, $\mathcal{P}_{d_1, \infty} := \bigoplus_{k \ge d_1} \mathcal{P}_k$. These spaces are endowed with the same norm topology. The perturbation Hamiltonian $P$ in \eqref{Hamiltonian} belongs to $\mathcal{P}_{4,\infty}$.
	
	For $P_1,P_2\in \mathcal{P}_{d_1,d_2},$ we define their Poisson bracket as
	$$\{P_1,P_2\}:=-\i\sum_{J=(j,\sigma)\in\mathcal{Z}}\sigma\frac{\partial P_1}{\partial u_{J}}\frac{\partial P_2}{\partial u_{\bar{J}}}.$$
	Given a truncation parameter $N > 0$, we decompose the phase space by introducing projectors onto low modes and high modes. For any $u \in W_s$, we define
	$$u^{<} := \Pi^< u, \quad \text{where } (\Pi^< u)_J = \begin{cases} u_J & \text{if } |j| \le N \\ 0 & \text{if } |j| > N \end{cases}$$
	and $u^{>} := \Pi^> u := u - u^<$. This allows us to classify polynomials based on their order of vanishing at $u^>=0$.

	\section{Resonance and Normal Form}\label{sec:3}
	
	Our normal form construction relies on a distinction between resonant and non-resonant interactions. The parameter $m$ is chosen from a set $[m_1,m_2]\setminus\mathcal{M}$ of large Lebesgue measure, which, by Lemma \ref{nonres}, ensures a Diophantine non-resonance condition holds. Specifically, for $N$ sufficiently large, any multi-index $\mathcal{J}=(J_1, \dots, J_d)$ composed entirely of low-frequency modes ($|j_l| \le N$ for all $l$) satisfies
	\begin{equation}\label{nr}
		\left|\sum_{l=1}^{d}\omega_{j_l}\sigma_l\right| \geq \frac{\gamma}{N^{4d^3}}, \quad \text{whenever} \quad \sum_{l=1}^{d}\omega_{j_l}\sigma_l \neq 0.
	\end{equation}
	
	To handle interactions involving high-frequency modes, we partition the index set $\mathcal{Z}$ into disjoint blocks, $\mathcal{Z}=\bigcup_{\alpha}\Omega_{\alpha}$, with the following properties:
	\begin{enumerate}
		\item There is a central block $\Omega_0$ containing all modes up to a certain size: $\Omega_0 := \{ J \in \mathcal{Z} \mid |J| \le C_0^* \}$.
		\item The remaining blocks $\Omega_\alpha$ for $\alpha \neq 0$ have a uniformly bounded "width": for every $\alpha \neq 0$, there exists a constant $C_{1}^*$ such that $\sup_{J\in\Omega_{\alpha}}|J|-\inf_{J\in\Omega_{\alpha}}|J|\leq C_1^*$.
	\end{enumerate}	
	
	Based on these structures, we classify all multi-indices $\mathcal{J}=(J_1,...,J_d)$ according to the number, denoted by $\mathsf{s}$, of its components that are high-frequency (i.e., $|J_i|>N$). The classification, which determines whether an interaction is considered "resonant" or "non-resonant," is summarized in Table \ref{table1}. For interactions with $\mathsf{s}=2$, the distinction also depends on whether the two high-frequency modes belong to the same block $\Omega_\alpha$.
	
	\begin{table}[h]
		\caption{Case classification for resonant and non-resonant multi-indices}
		\label{table1}
		\begin{tabular}{|lll|l|l|}
			\hline
			\multicolumn{3}{|l|}{Classification Condition} & Result & Case Code \\ \hline
			\multicolumn{1}{|l|}{$\mathsf{s}\geq3$} & \multicolumn{2}{l|}{} & High-frequency remainder & $\mathscr{R}$ \\ \hline
			\multicolumn{1}{|l|}{\multirow{3}{*}{$\mathsf{s}=2$}} & \multicolumn{1}{l|}{\multirow{2}{*}{$\sigma_1\sigma_2=-1$}} & $J_1,J_2\in\Omega_{\alpha}$ (same block) & Resonant & R2 \\ \cline{3-5} 
			\multicolumn{1}{|l|}{} & \multicolumn{1}{l|}{} & $J_1\in\Omega_{\alpha},J_2\in\Omega_{\beta},\alpha\neq\beta$ & Non-resonant & NR22 \\ \cline{2-5} 
			\multicolumn{1}{|l|}{} & \multicolumn{2}{l|}{$\sigma_1\sigma_2=1$} & Non-resonant & NR21 \\ \hline
			\multicolumn{1}{|l|}{$\mathsf{s}=1$} & \multicolumn{2}{l|}{ } & Non-resonant & NR1 \\ \hline
			\multicolumn{1}{|l|}{\multirow{2}{*}{$\mathsf{s}=0$}} & \multicolumn{2}{l|}{$\sum_{l=1}^d\omega_{j_l}\sigma_l=0$} & Resonant & R0 \\ \cline{2-5} 
			\multicolumn{1}{|l|}{} & \multicolumn{2}{l|}{$\sum_{l=1}^d\omega_{j_l}\sigma_l\neq0$} & Non-resonant & NR0 \\ \hline
		\end{tabular}
	\end{table}
	
	This classification leads to a partition of all $d$-th order multi-indices into three disjoint sets: the resonant set $\mathscr{Z}$ (cases R0, R2), the non-resonant set $\mathscr{N}$ (cases NR0, NR1, NR21, NR22), and the high-frequency remainder set $\mathscr{R}$ (cases with $\mathsf{s} \ge 3$).
	
	\begin{definition}[Normal Form Polynomial]
		A polynomial $Z \in \mathcal{P}_{3,d}$ is said to be in N-normal form if it is composed exclusively of resonant monomials. That is, if
		$$Z(u)=\sum_{k=3}^{d}\sum_{\mathcal{J}\in\mathcal{I}_k}Z_{\mathcal{J}}u^{\mathcal{J}},$$
		then $Z_{\mathcal{J}}$ can be non-zero only if the multi-index $\mathcal{J}$ belongs to the resonant set $\mathscr{Z}$. Specifically, $\mathcal{J}=\{J_1,\dots,J_k\}$ must satisfy one of the following conditions:
		\begin{enumerate}
			\item Low-frequency resonance, R0 All modes in $\mathcal{J}$ are low-frequency ($|j_l| \le N$ for all $l=1,\dots,k$) and their frequencies are exactly resonant: $\sum_{l=1}^{k}\sigma_l\omega_{j_l}=0$.
			\item High-frequency resonance: The multi-index $\mathcal{J}$ contains exactly two high-frequency modes, say $J_1$ and $J_2$, which form a resonant pair: they have opposite signs ($\sigma_1\sigma_2=-1$) and belong to the same block $\Omega_\alpha$. All other modes in $\mathcal{J}$ are low-frequency.
		\end{enumerate}
	\end{definition}
	
	\section{The Iteration Lemma}\label{sec:4}
	
	We begin by analyzing the solution to the homological equation, which is central to the iterative construction of the normal form.
	
	\begin{lemma}[Homological Equation]\label{homo}
		Let $P \in \mathcal{P}_{k,p}$ be a polynomial of degree at most two with respect to $u^>,$ and let $m \in [m_1,m_2] \setminus \mathcal{M}$. Then there exist a polynomial $G \in \mathcal{P}_{k,p}$ and a normal form polynomial $Z$ such that
		$$P = Z - \{H_0, G\},$$
		where $Z$ is the projection of $P$ onto the resonant monomials, i.e., $Z = \sum_{\mathcal{J}\in\mathscr{Z}} P_\mathcal{J}u^\mathcal{J}$. Moreover, the following estimates hold:
		\begin{enumerate}
			\item $|Z|_{r,s} \leq |P|_{r,s}$;
			\item $|G|_{r,s} \leq \gamma^{-1} N^{C_{5}d^6} |P|_{r,s}$.
		\end{enumerate}
	\end{lemma}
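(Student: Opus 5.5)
We would solve the homological equation coefficientwise. Since $\{H_0,u^{\mathcal{J}}\}=\i\,\Omega(\mathcal{J})\,u^{\mathcal{J}}$ (up to a sign convention), with $\Omega(\mathcal{J}):=\sum_l\sigma_l\omega_{j_l}$, we set $Z_{\mathcal{J}}=P_{\mathcal{J}}$ for $\mathcal{J}\in\mathscr{Z}$. For $\mathcal{J}\in\mathscr{N}$ we set $G_{\mathcal{J}}=P_{\mathcal{J}}/(\i\Omega(\mathcal{J}))$, with the sign fixed so that $P=Z-\{H_0,G\}$. Because $P$ has degree at most two in $u^>$, no monomial of $P$ lies in $\mathscr{R}$, so $P=Z+P_{\mathscr N}$ exhausts $P$. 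Momentum conservation and reality pass to $Z$ and $G$: $\Omega(\bar{\mathcal J})=-\Omega(\mathcal J)$ gives $\overline{G_{\mathcal J}}=G_{\bar{\mathcal J}}$, and the support of $G$ is contained in that of $P$.

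For the norm estimates, the natural tool is a majorant argument. We define $\underline{P}$ with coefficients $|P_{\mathcal J}|$ and use that $|\cdot|_{r,s}$ is monotone with respect to coefficient moduli, i.e.\ $\|X_P(u)\|_s\le \|X_{\underline P}(|u|)\|_s$. This is the standard property of these weighted $\ell^2$ norms. If the paper's norm is a plain sup over the ball, we would instead work with the modulus norm, or prove the lemma for it. Given this, item 1 is immediate, since the coefficients of $Z$ are a subset of those of $P$. Item 2 reduces to a uniform lower bound
\[
|\Omega(\mathcal J)|\ge \gamma N^{-C_5 d^6}\quad\text{for all }\mathcal J\in\mathscr N\text{ in the support of }P,
\]
and we would prove this by cases following Table \ref{table1}.

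In case NR0 the bound follows directly from \eqref{nr}, since $4d^3\le C_5d^6$. In cases NR1, NR21 and NR22 we write $\Omega=\Omega_{\rm low}+\Omega_{\rm high}$ and proceed as follows.
\begin{enumerate}
\item NR21: $\Omega_{\rm high}=\pm(\omega_{j_1}+\omega_{j_2})\gtrsim N^\alpha$, while momentum conservation gives $|j_1|+|j_2|\lesssim dN$. If $|\Omega_{\rm low}|\le dN^\alpha$ falls short, we would use momentum to reduce to a sum of two high frequencies minus a bounded number of low ones. We expect the simplest route is to enlarge the low cutoff by a constant, treating indices with $|j|\le CN$ as participating in the Diophantine condition of Lemma \ref{nonres} with $N$ replaced by $CN$.
\item NR1: momentum forces $|j_1|\le dN$, so all modes lie below $dN$. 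Lemma \ref{nonres} at scale $dN$ applies, because $\Omega\neq0$ is guaranteed by the non-degeneracy of Assumption \ref{ass:freq} for the parameter-excluded set.
\item NR22: this is the main obstacle. Here $\Omega=\Omega_{\rm low}\pm(\omega_{j_1}-\omega_{j_2})$ with $J_1,J_2$ in different blocks. By Assumption \ref{ass:freq}(1), $\omega_{j_1}-\omega_{j_2}=|j_1|^\alpha-|j_2|^\alpha+O(N^{\alpha-1})$, and momentum gives $|j_1-j_2|\le dN$.
\end{enumerate}

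To handle NR22 we would split on the size of the high indices. If $|j_1|,|j_2|\ge N^{K}$ with $K$ large (depending on $d$ and $\alpha$), then $\big||j_1|^\alpha-|j_2|^\alpha\big|\lesssim dN\cdot N^{K(\alpha-1)}$, which is much smaller than $\gamma N^{-4d^3}$. The full divisor is then at least $\tfrac12|\Omega_{\rm low}|\ge \tfrac12\gamma N^{-4d^3}$, provided $\Omega_{\rm low}\neq0$. If $\Omega_{\rm low}=0$, we would rely on the block construction: distinct blocks are separated so that $\big||j_1|^\alpha-|j_2|^\alpha\big|$ stays bounded below, which is where the modified Bourgain clustering defining $\Omega_\alpha$ (with width $C_1^*$) must be invoked. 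In the remaining range $N<|j_1|,|j_2|<N^K$, all indices are bounded by $N^K$, and we apply Lemma \ref{nonres} at scale $N^K$. This gives $|\Omega|\ge\gamma N^{-4Kd^3}$, and choosing $K\sim d^3$ produces the exponent $C_5d^6$. The delicate points are matching $K$ with the block width and with the parameter exclusion in Lemma \ref{nonres}, and ensuring that $\Omega\ne0$ is not an exact resonance in this range. Both are exactly what the clustering partition should guarantee, so this step is where we expect to spend the effort.
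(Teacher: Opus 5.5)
Your architecture matches the paper's:
\begin{itemize}
\item coefficientwise inversion of the homological equation;
\item the case split of Table~\ref{table1}, with NR0 taken directly from \eqref{nr};
\item for NR22, a threshold on $|j_2|$ (the paper uses $N^{4d^3/(1-\alpha)}$). Above it, the low-mode divisor dominates $|\omega_{j_1}-\omega_{j_2}|\le(\alpha(d-2)N+2C_{\mathtt{rem}})|j_2|^{\alpha-1}$. Below it, \eqref{nr} is applied at scale $N^{5d^3/(1-\alpha)}$, which produces $C_5d^6$.
\end{itemize}
Your NR1 argument via momentum is fine; the paper instead splits on the size of $|j_1|$. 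Your caution that the coefficientwise bound needs a modulus (majorant) norm is justified, since the paper simply asserts it.

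Your NR21 step, however, rests on a false claim. When $\sigma_1=\sigma_2$, momentum only gives $|j_1+j_2|\le(d-2)N$, not $|j_1|+|j_2|\lesssim dN$. So $j_2\approx -j_1$ with $|j_1|$ arbitrarily large is allowed, and your reduction to \eqref{nr} at scale $CN$ misses it. What is needed, and what the paper does, is a size split. Set $N_1:=(c_2/c_1)^{1/\alpha}(2d)^{1/\alpha}(N+m_2)$. If $\max(|j_1|,|j_2|)\ge N_1$, then $\omega_{j_1}+\omega_{j_2}\ge c_1\max(|j_1|,|j_2|)^\alpha$ beats $|\Omega_{\mathrm{low}}|\le c_2(d-2)N^\alpha$. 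Otherwise every index is below $N_1$, and \eqref{nr} at scale $N_1$ applies.

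The serious gap is NR22 with $\Omega_{\mathrm{low}}=0$. Your remedy, that distinct blocks are separated so that $\bigl||j_1|^\alpha-|j_2|^\alpha\bigr|$ stays bounded below, is impossible for any partition whose width $C_1^*$ does not depend on $N$. Pairs with $\bigl||j_1|-|j_2|\bigr|>C_1^*$ necessarily lie in different blocks. Yet momentum allows $|j_1-j_2|$ up to $(d-2)N$, and then $|\omega_{j_1}-\omega_{j_2}|\to0$ as $|j_2|\to\infty$.

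This subcase is not empty, because in both models $\omega_j$ depends only on $|j|$. For instance, in $\mathsf d=1$ take the monomial $u_M\bar u_{M+2a}u_a\bar u_{-a}$ with $C_1^*<2a\le 2N<M$; it occurs in $\int|\psi|^4$. It conserves momentum, is of type NR22, and has $\Omega=\omega_M-\omega_{M+2a}\approx-2a\alpha M^{\alpha-1}$. Then $G_{\mathcal J}$ grows like $M^{1-\alpha}$, so no bound $|G|_{r,s}\le\gamma^{-1}N^{C_5d^6}|P|_{r,s}$ can hold.

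There is a related problem in the moderate range. Using \eqref{nr} at the enlarged scale requires $\Omega\neq0$. Nothing in the stated block properties prevents $|j_1|=|j_2|$ with $j_1\ne j_2$ from falling in different blocks while $\Omega_{\mathrm{low}}=0$, which gives $\Omega=0$ exactly.

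The paper's own proof does not close this either. It applies \eqref{nr} to $\sum_{l\ne1,2}\sigma_l\omega_{j_l}$ without checking that this sum is nonzero, and the blocks play no role in its divisor bounds. So this step cannot be completed by appealing to the clustering. You would have to change the resonant set, for example by declaring such pairs resonant. That forces block widths of order $dN$ and a new weight comparison in the high-mode estimate of Section~\ref{sec:6}. Alternatively, you would have to weaken the statement.
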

	
	\begin{proof}
		The estimate for $Z$ is straightforward, as its coefficients are a subset of the coefficients of $P$. To construct $G$, we define it as $G = \sum_{\mathcal{J}\in\mathcal{N}} G_{\mathcal{J}}u^{\mathcal{J}}$ and seek to solve the equation
		$-\{H_0,G\} = \sum_{\mathcal{J}\in\mathcal{N}} P_{\mathcal{J}}u^{\mathcal{J}}.$
		This yields the coefficient-wise relation
		$$ \left(\sum_{l=1}^{d}\omega_{j_l}\sigma_l\right) G_{\mathcal{J}} = P_{\mathcal{J}}, \quad \forall \mathcal{J} \in \mathcal{N}. $$
		The primary task is to establish a lower bound for the small divisors, i.e., the term $|\sum_{l=1}^{d}\omega_{j_l}\sigma_l|$, for all non-resonant multi-indices $\mathcal{J} \in \mathcal{N}$. We proceed by analyzing each case from Table \ref{table1}.
		
		For Case NR0, all indices satisfy $|J_l|\leq N$ and $\sum_{l}\sigma_l\omega_{j_l}\neq0$. The non-resonance condition \eqref{nr} directly implies that $\left|\sum_{l=1}^{d}\omega_{j_l}\sigma_l\right|\geq\frac{\gamma}{N^{4 d^3}}$.
		
		For Case NR1, we have the inequalities
		$$\left|\sum_{l\neq1}\sigma_l\omega_{j_l}\right|\leq (d-1)(N^\alpha + C_{\mathtt{rem}} N^{\alpha-1}) \leq c_2(d-1)N^\alpha \quad \text{and} \quad |\omega_{j_1}|\geq |j_1|^\alpha - C_{\mathtt{rem}} |j_1|^{\alpha-1} \geq c_1 |j_1|^\alpha$$
		for $|j_1| > N$. Consequently, for $|j_1|\geq \left(\frac{c_2}{c_1}\right)^{\frac{1}{\alpha}} (2d)^{\frac{1}{\alpha}}(N+m_2):=N_1$, we have
		$$\left|\sum_{l=1}^{d}\omega_{j_l}\sigma_l\right|\geq c_1 |j_1|^{\alpha}-c_2(d-1)N^\alpha\geq N^\alpha > 1.$$
		For the regime where $|j_l|\leq|j_1|\leq N_1$ for $l\neq1,$ applying \eqref{nr} with an effective scale of $N_1$ yields
		$$\left|\sum_{l=1}^{d}\omega_{j_l}\sigma_l\right|\geq\frac{\gamma}{N_1^{4 d^3}}=\frac{\gamma}{(\left(\frac{c_2}{c_1}\right)^{\frac{1}{\alpha}}(2d)^{\frac{1}{\alpha}}(N+m_2))^{4d^3}}\geq\frac{\gamma}{N^{4d^3+\frac{8}{\alpha}d^4}}.$$
		
		For Case NR21, we have the following inequalities:
		$$\left|\sum_{l\neq1,2}\sigma_l\omega_{j_l}\right|\leq c_2(d-2)N^\alpha \quad \text{and} \quad |\omega_{j_1}|+|\omega_{j_2}|\geq c_1|\max\{|j_1|,|j_2|\}|^{\alpha}.$$ 
		When $\max\{|j_1|,|j_2|\}\geq \left(\frac{c_2}{c_1}\right)^{\frac{1}{\alpha}} (2d)^{\frac{1}{\alpha}}(N+m_2)=N_1$, it follows that
		$$\left|\sum_{l=1}^{d}\omega_{j_l}\sigma_l\right|\geq c_1 |\max\{|j_1|,|j_2|\}|^\alpha - c_2(d-2)N^\alpha \geq 2N^\alpha > 1.$$
		When $|j_l|\leq\max\{|j_1|,|j_2|\}\leq N_1$ for $l\neq1,2$, we again have
		$$\left|\sum_{l=1}^{d}\omega_{j_l}\sigma_l\right|\geq\frac{\gamma}{N_1^{4 d^3}}\geq\frac{\gamma}{N^{4d^3+\frac{8}{\alpha}d^4}}.$$
		
		For Case NR22, we invoke \eqref{nr} to bound the low-frequency part of the sum
		$$\left|\sum_{l\neq1,2}\sigma_l\omega_{j_l}\right|\geq \frac{\gamma}{N^{4(d-2)^3}}.$$
		Applying Assumption \ref{ass:freq} to the high-frequency part gives
		\begin{align*}
			|\omega_{j_1}-\omega_{j_2}| &\leq \left| |j_1|^{\alpha}-|j_2|^{\alpha} \right| + \left| r_m(j_1) - r_m(j_2) \right| \\
			&\leq \alpha |j_2|^{\alpha-1} |j_1 - j_2| +2 C_{\mathtt{rem}}  |j_2|^{\alpha-1}.
		\end{align*}
		By momentum conservation, $|j_1 - j_2| \leq (d-2)N$. Hence:
		$$ |\omega_{j_1}-\omega_{j_2}| \leq \left( \alpha(d-2)N +2 C_{\mathtt{rem}}  \right) |j_2|^{\alpha-1}. $$
		When $|j_2|\geq N^{\frac{4d^3}{1-\alpha}},$ we obtain
		$$\left|\sum_{l=1}^d\sigma_l\omega_{j_l}\right|\geq\frac{\gamma}{N^{4(d-2)^3}}-\frac{(\alpha(d-2)N + 2C_{\mathtt{rem}})}{N^{4d^3}}\geq\frac{\gamma}{2N^{4(d-2)^3}}\geq\frac{\gamma}{N^{4d^3}}.$$
		And when $|j_2|\leq N^{\frac{4d^3}{1-\alpha}}, |j_1|\leq(d-2)N+N^{\frac{4d^3}{1-\alpha}}\leq N^{\frac{5d^3}{1-\alpha}},$ applying \eqref{nr} on the scale of $N_2 = N^{\frac{5d^3}{1-\alpha}}$ yields
		$$\left|\sum_{l=1}^{d}\omega_{j_l}\sigma_l\right|\geq\frac{\gamma}{(N^{\frac{5d^3}{1-\alpha}})^{4d^3}}=\frac{\gamma}{N^{\frac{20d^6}{1-\alpha}}}.$$ 
		
		Combining the estimates from all cases, we conclude that the small divisors for all non-resonant indices satisfy the uniform bound:
		$$ \left|\sum_{l=1}^{d}\omega_{j_l}\sigma_l\right|\geq\frac{\gamma}{N^{C_{5}d^6}},$$
		where $C_5 = \max\left\{ \frac{20}{1-\alpha}, 1 + \frac{8}{\alpha} \right\}$. This lower bound on the divisors directly leads to the estimate for the norm of the generator $G$:
		$$|G|_{r,s}\leq \gamma^{-1} N^{C_{5}d^6}|P|_{r,s}.$$
	\end{proof}

	Now we can begin the iterate process.

	\begin{lemma}[The Iteration Lemma]\label{iter}
		Let $H$ be the Hamiltonian defined in \eqref{Hamiltonian}, and let the parameters satisfy $d \geq k \geq 3$, $d > C_7$, $s > s_0 > \frac{\mathsf{d}}{2}$, and the smallness condition $r d^2 C_{6} N^{C_{5}d^6} < 1$. Define a sequence of shrinking radii by $r_k := 2r - \frac{(k-3)r}{d-3}$. Then there exists a sequence of canonical transformations $\mathcal{T}^{(k)} : B_s(r_k) \to B_s(r_3)$ such that the transformed Hamiltonian $H^{(k)} := H \circ \mathcal{T}^{(k)}$ possesses the following properties:
		\begin{enumerate}
			\item It admits the decomposition $H^{(k)} = H_0 + Z_k + P_k + R_{k,d} + R_{k,>}$;
			\item The perturbation term $P_k \in \mathcal{P}_{k,d}$ is bounded by $|P_k|_{r_k,s} \leq d^{2k-6}(C_{P,1}r)^{k-2}N^{C_{5}(k-3)d^6}$;
			\item The high-order remainder $R_{k,d}$ is bounded by $|R_{k,d}|_{r_k,s} \leq r^{d-2}N^{3C_{5}d^{7}}$;
			\item The high-frequency remainder $R_{d,>}$ is bounded by $|R_{d,>}|_{r,s} \leq \frac{C_{R}r}{\e^{(s-s_0)f(N)}}$.
		\end{enumerate}  
	\end{lemma}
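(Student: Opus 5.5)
We argue by induction on $k$, from $k=3$ up to $k=d$, using Lie transforms generated by solutions of the homological equation (Lemma \ref{homo}). For $k=3$ we take $\mathcal{T}^{(3)}=\mathrm{id}$. We split $P$ into its part $P_3\in\mathcal{P}_{4,d}$ (degree at most $d$) and its tail of degree $>d$, which we place in $R_{3,d}$. We set $Z_3=0$ and $R_{3,>}=0$. The initial bound $|P_3|_{2r,s}\le C_{P,1}r$ should follow from the boundedness and momentum conservation of the coefficients, via a Young-type convolution estimate in the weighted space $W_s$. This estimate uses $s>s_0>\mathsf{d}/2$ to make $\sum_j e^{-2s_0 f(\langle j\rangle)}$ summable, together with subadditivity of $f$ along momentum-zero index sets.

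Inductive step $k\to k+1$. Let $P_k^{(k)}$ be the homogeneous degree-$k$ component of $P_k$. We decompose it as $P_k^{(k)}=P_k^{\le 2}+P_k^{\ge 3}$, according to whether a monomial contains at most two or at least three high-frequency modes $|j|>N$. The piece $P_k^{\ge 3}$ is exactly of type $\mathscr{R}$; we move it into $R_{k+1,>}=R_{k,>}\circ\Phi+P_k^{\ge3}\circ\Phi$. To $P_k^{\le 2}$ we apply Lemma \ref{homo}, which produces a resonant part $Z^{(k)}$ and a generator $G_k$ with
\[
|G_k|_{r_k,s}\le \gamma^{-1}N^{C_5d^6}|P_k|_{r_k,s}.
\]
We then set $\Phi_k$ to be the time-one flow of $X_{G_k}$ and $\mathcal{T}^{(k+1)}=\mathcal{T}^{(k)}\circ\Phi_k$. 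Expanding $H^{(k)}\circ\Phi_k$ as a Lie series, we obtain
\[
H_0+Z_k+Z^{(k)}+\bigl(P_k-P_k^{(k)}\bigr)+\sum_{l\ge1}\frac{\mathrm{ad}_{G_k}^l}{l!}\bigl(Z_k+P_k\bigr)+\sum_{l\ge2}\frac{\mathrm{ad}^{l-1}_{G_k}}{l!}\bigl(Z^{(k)}-P_k^{\le2}\bigr)+(\text{remainders})\circ\Phi_k.
\]
All terms of degree in $[k+1,d]$ form $P_{k+1}$. Everything of degree $>d$ goes into $R_{k+1,d}$. We set $Z_{k+1}=Z_k+Z^{(k)}$.

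The quantitative core consists of two ingredients. The first is a Poisson bracket estimate $|\{P,Q\}|_{\rho',s}\le \frac{C d}{\rho-\rho'}|P|_{\rho,s}|Q|_{\rho,s}\rho$, a Cauchy-type estimate on nested balls, applied with the radius loss $r_k-r_{k+1}=r/(d-3)$. The second is the flow estimate: $\Phi_k$ maps $B_s(r_{k+1})$ into $B_s(r_k)$ as long as $|G_k|_{r_k,s}\,r_k\lesssim r/d$. By the inductive bound this is guaranteed by the smallness condition $rd^2C_6N^{C_5d^6}<1$.

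Each bracket with $G_k$ costs a factor $d\cdot d\,\gamma^{-1}N^{C_5d^6}C_{P,1}r$, coming from the radius loss $1/(d-3)$ and the degree. This is exactly what produces the increment $d^2(C_{P,1}r)N^{C_5d^6}$ in item 2, and the resulting series in $l$ converges geometrically. The terms of degree greater than $d$ are at least of order $r^{d-2}$. Summing their contributions over at most $d$ steps loses at most $N^{C_5 d^6\cdot d}$, up to constants absorbed by $d>C_7$. This gives item 3, namely $r^{d-2}N^{3C_5d^7}$.

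For item 4 we need to estimate monomials with at least three high modes. In the vector field of such a monomial, every component keeps at least two factors $u^>$. We bound one of them in $W_{s_0}$ using
\[
\|u^>\|_{s_0}\le e^{-(s-s_0)f(N)}\|u\|_s .
\]
This gives $|R_{k,>}|\le C_R r\, e^{-(s-s_0)f(N)}$, and the other factors are controlled by the $P_k$ bounds.

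The main obstacle is that the transformations $\Phi_k$ act on $R_{k,>}$ and could in principle destroy the "at least three high modes" structure, or change it by a constant at each step. To handle this, I would prove that the class of polynomials with at least three high modes is stable under brackets with $G_k$. This holds because $G_k$ has at most two high modes, so a bracket can reduce the count by at most one only when it contracts a high index. If that stability argument fails, I would instead estimate $R_{k,>}\circ\Phi_k$ directly through the flow estimate, paying a factor $(1+O(1/d))$ per step, so that the accumulated factor over at most $d$ steps is $O(1)$ and can be absorbed into $C_R$.
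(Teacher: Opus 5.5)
Your overall route is the paper's. Both arguments use induction on $k$, time-one maps of generators from Lemma~\ref{homo}, the Lie bracket estimate with radius loss $r/(d-3)$, splitting off monomials with at least three high modes, and the bound $\|u^>\|_{s_0}\le \e^{-(s-s_0)f(N)}\|u\|_s$ for item 4.

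The bookkeeping differs. The paper solves the homological equation for all of $P_k\in\mathcal{P}_{k,d}$ and strips the part with $\ge 3$ high modes from every degree at every step. As a result $P_{k+1}$ consists only of brackets, and item 2 follows from a uniform per-step gain. You normalize only the degree-$k$ component, so the untouched higher-degree components ride along and item 2 must be checked degree by degree. This is harmless, but it has to be said. Also, since you move $P_k^{\ge3}\circ\Phi_k$ into $R_{k+1,>}$, your Lie series must act on $Z_k+P_k-P_k^{\ge 3}$, not on $Z_k+P_k$.

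\textbf{The stability claim is false.} The class of polynomials with at least three high modes is not stable under brackets with $G_k$. The generator contains NR1 monomials, which have exactly one high mode $|j_1|>N$. Momentum conservation allows these, with $N<|j_1|\le (k-1)N$, and Lemma~\ref{homo} does invert them. If a monomial of $R_{k,>}$ with exactly three high modes contains the conjugate of that high mode, their bracket has $3+1-2=2$ high modes. Your own remark that a high contraction lowers the count by one already shows this.

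Equivalently, the high components of $X_{G_k}$ do not vanish on $\{u^>=0\}$. So $\Phi_k$ does not preserve this subspace, and composition destroys the third-order zero. The paper's proof asserts precisely this (``$R_{k,>}\circ\Phi_{G_{k+1}}$ has at least a third order zero at $u^>=0$'') and then applies Lemma~\ref{cut} to $R_{d,>}$ as if it were such a polynomial. Your fallback is therefore not a backup: it is the argument that actually carries item 4, and it is sounder than the paper's.

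\textbf{Making the fallback rigorous.}
\begin{itemize}
\item \emph{Flow estimate.} Use $X_{R\circ\Phi_k}=(D\Phi_k)^{-1}\,X_R\circ\Phi_k$, which holds because the map is symplectic, together with $\Phi_k(B_s(r_{k+1}))\subset B_s(r_k)$. This gives $|R\circ\Phi_k|_{r_{k+1},s}\le \|(D\Phi_k)^{-1}\|\,\frac{r_k}{r_{k+1}}\,|R|_{r_k,s}$. The radius ratios telescope to $r_3/r_d=2$.
\item \emph{Derivative factors.} A Cauchy estimate gives $\|(D\Phi_k)^{-1}-I\|\lesssim d\,|G_k|_{r_k,s}$. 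The crude bound $|G_k|\le E\sim 1/d$ only gives a factor $1+O(1)$ per step, whose $d$-fold product is exponential in $d$. To get the $1+O(1/d)$ you claim, use the decaying bound $|G_k|_{r_k,s}\le \gamma^{-1}N^{C_5d^6}|P_k|_{r_k,s}\le \gamma^{-1}d^{-2}q^{k-2}$, where $q:=d^2C_{P,1}rN^{C_5d^6}<1$ by the smallness condition. Then $\sum_k d\,|G_k|_{r_k,s}=O(1/(d\gamma))$, and the product of the derivative factors is $1+O(1/(d\gamma))$.
\item \emph{Truncation gain.} The freshly split pieces $P_k^{\ge 3}$ are genuine polynomials with three high modes, but the truncation gain needs a bound on their \emph{coefficients} (or on a majorant norm). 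The vector-field norm $|P_k|_{r_k,s}$ does not see the $s_0$/$s$ splitting, so ``the other factors are controlled by the $P_k$ bounds'' is not enough as stated. You need to carry a coefficient bound through the induction. The paper is equally terse here, invoking Lemma~\ref{cut} without tracking the coefficients of $R_{k,>}$.
\end{itemize}
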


	\begin{proof}
		The proof proceeds by induction on the index $k$.
		
		For induction base $k=3$, we begin with the initial Hamiltonian $H = H_0+P$. For a given large integer $N$, we decompose the perturbation $P$ as $P = P_3 + R_{3,>} + R_{3,d}$, where $R_{3,d} \in \mathcal{P}_{d+1,\infty}$ contains terms of order $d+1$ and higher, and $R_{3,>}$ contains terms with a zero of at least order three in $u^>$. This provides the base case for our iteration: $H^{(3)} := H_0+P_3+R_{3,>}+R_{3,d}$.

		Now we consider following homological equations in the inductive step  $d\geq k\geq3$:
		$$\{H_0,G_{k+1}\}+P_k=Z_{k}^*,$$  
		where $Z_3:=Z_3^*.$ 
		The next transformation is the time-$1$ map, $\Phi_{G_{k+1}}$, generated by the Hamiltonian flow of $G_{k+1}$. We define the subsequent Hamiltonian as
		\begin{align*}
			H^{(k+1)} := H^{(k)} \circ \Phi_{G_{k+1}} = (H_0+Z_{k}+P_{k}+R_{k,d}+R_{k,>}) \circ \Phi_{G_{k+1}}.
		\end{align*}

		Consider the transformation acting on term $H_0$. Let $n$ be an integer such that $n=[\frac{k-d}{k-2}]$, then
		\begin{align*}
			H_0\circ\Phi_{G_{k+1}} =H_0+\{H_0,G_{k+1}\}+\sum_{l=2}^{n}\frac{ad^l_{G_{k+1}}}{l!}H_0+R_{H_0,G_{k+1}}\\
			&=H_0+\{H_0,G_{k+1}\}+P_{H_0,k+1}+R_{H_0,G_{k+1}}.
		\end{align*}
		By the choice of $n$, we have $P_{H_0,k+1}\in\mathcal{P}_{k+1,d},R_{H_0,G_{k+1}}\in\mathcal{P}_{d+1,\infty}$.

		Consider the transformation acting on term $Z_k$:
		\begin{align*}
			Z_k\circ\Phi_{G_{k+1}}&=Z_k+\sum_{l=1}^{n}\frac{ad^l_{G_{k+1}}}{l!}Z_k+R_{Z_k,G_{k+1}}\\
			&=Z_k+P_{Z_k,k+1}+R_{Z_k,G_{k+1}},
		\end{align*}
		where $n$ is taken the same way as above, so there is $P_{Z_k,k+1}\in\mathcal{P}_{k+1,d},R_{Z_k,G_{k+1}}\in\mathcal{P}_{d+1,\infty}$.
		
		Consider the transformation acting on term $P_k$:
		\begin{align*}
			P_k\circ\Phi_{G_{k+1}}&=P_k+\sum_{l=1}^{n}\frac{ad^l_{G_{k+1}}}{l!}P_k+R_{P_k,G_{k+1}}\\
			&=P_k+P_{P_k,k+1}+R_{P_k,G_{k+1}}.
		\end{align*}
		There is also $P_{P_k,k+1}\in\mathcal{P}_{k+1,d},R_{P_k,G_{k+1}}\in\mathcal{P}_{d+1,\infty}$.
		
		Consider the transformation acting on term $R_{k,d}$. Note that $R_{k,d}\circ\Phi_{G_{k+1}}$ has at least $d+1$ order zero at $u=0$. Hence
		$$R_{k,d}\circ\Phi_{G_{k+1}}\in\mathcal{P}_{d+1,\infty}.$$
		
		Consider the transformation acting on term $R_{k,>}$. Now
		$R_{k,>}\circ\Phi_{G_{k+1}}$ has at least a third order zero at $u^>=0$.
		
		Summing up the above, we can rearrange as follows for the $H^{(k+1)}$:
		\begin{align*}
			H^{(k+1)}:&=H_0+(\{H_0,G_{k+1}\}+P_k)+Z_k+\\
			&+P_{H_0,k+1}+P_{Z_k,k+1}+P_{P_k,k+1}\\
			&+R_{H_0,G_{k+1}}+R_{Z_k,G_{k+1}}+R_{P_k,G_{k+1}}+R_{k,\bar{p}}\circ\Phi_{G_{k+1}}\\
			&+R_{k,>}\circ\Phi_{G_{k+1}}.
		\end{align*} 
		Define
		\begin{align*}
			R_{d,k+1}:&=R_{H_0,G_{k+1}}+R_{Z_k,G_{k+1}}+R_{P_k,G_{k+1}}+R_{k,\bar{p}}\circ\Phi_{G_{k+1}},\\
			P_{k+1}^*:&=P_{H_0,k+1}+P_{Z_k,k+1}+P_{P_k,k+1}.
		\end{align*}
		We decompose $P_{k+1}^*$ into $P_{k+1}^*=P_{k+1}+ R_{k+1,>}^*$, where $R_{k+1,>}^*$ contains all terms with a zero of order at least three in $u^>$, and $P_{k+1} \in \mathcal{P}_{k+1,d}$ contains terms where the order of the zero in $u^>$ is at most two. Substituting these definitions and using the homological equation, we arrive at the desired structure:
		\begin{align*}
			H^{(k+1)}&=H_0+( Z_{k}^*)+Z_k+P_{k+1}^*+R_{k+1,d}+R_{k,>}\circ\Phi_{G_{k+1}}\\
			&=H_0+(Z_k+ Z_{k}^*)+P_{k+1}+R_{k+1,d}+(R_{k,>}\circ\Phi_{G_{k+1}}+ R_{k+1,>}^*)\\
			&:=H_0+Z_{k+1}+P_{k+1}+R_{k+1,d}+R_{k+1,>}. 
		\end{align*}
		We now establish the bounds stated in the lemma. Lemma \ref{homo} provides the initial estimates for the generator and the new normal form term:
		\begin{align*}
			|G_{k+1}|_{r_{k},s}&\leq\frac{|P_k|_{r_k,s}}{\gamma}N^{C_{5}d^6},\\
			|Z_{k+1}-Z_k|_{r_k,s}&\leq| Z_{k}^*|_{r_k,s}\leq|P_k|_{r_k,s}.
		\end{align*}
		
		We use induction to prove the estimates of $P_k$ and $G_{k+1}$ during the iteration process.
		
		By the choice of $r$, we have the first inductive step
		\begin{align*}
			|G_4|_{r_3,s}&\leq\frac{|P_3|_{r_3,s}}{\gamma}N^{C_{5}d^6}\leq\frac{2C_Pr}{\gamma}N^{C_{5}d^6}\\
			&\leq E:=1/{16\e d}<\frac{r_k-r_{k+1}}{8\e r_k}.
		\end{align*} 
		Then we can use Lemma \ref{Lie} to prove the estimates for $P_{k+1},G_{k+2}$ based on the estimates for $P_{k},G_{k+1}$ inductively. Note that
		\begin{align*}
			|P_{k+1}|_{r_{k+1},s}&\leq|P_{H_0,k+1}|_{r_{k+1},s}+|P_{Z_k,k+1}|_{r_{k+1},s}+|P_{P_k,k+1}|_{r_{k+1},s}\\
			&\leq|\sum_{l=2}^{n}\frac{ad^l_{G_{k+1}}}{l!}H_0|_{r_{k+1},s}+|\sum_{l=1}^{n}\frac{ad^l_{G_{k+1}}}{l!}Z_k|_{r_{k+1},s}+|\sum_{l=1}^{n}\frac{ad^l_{G_{k+1}}}{l!}P_k|_{r_{k+1},s}\\
			&\leq|\sum_{l=1}^{n-1}\frac{ad_{G_{k+1}}^l}{(l+1)!}\{G_{k+1},H_0\}|_{r_{k+1},s}+|\sum_{l=1}^{n}\frac{ad^l_{G_{k+1}}}{l!}(Z_k+P_k)|_{r_{k+1},s}\\
			&\leq\sum_{l=1}^{n-1}\frac{1}{(l+1)!}(\frac{|G_{k+1}|_{r_k,s}}{2E})^l|P_k|_{r_k,s}\\
			&+\sum_{l=1}^{n}\frac{1}{l!}(\frac{|G_{k+1}|_{r_k,s}}{2E})^l(|P_k|_{r_k,s}+\sum_{m=3}^{k-1}|Z_{m+1}-Z_m|'_{r_k,s}+|Z_3|'_{r_k,s})\\
			&\leq\sum_{l=1}^{n}\frac{2}{l!}(\frac{|G_{k+1}|_{r_k,s}}{2E})^l(\sum_{m=3}^{k}|P_m|'_{r_m,s})\\
			&\leq \frac{\e|G_{k+1}|_{r_k,s}}{E}\sum_{m=3}^{k}|P_m|'_{r_m,s}.
		\end{align*}
		When $k=3$, we have 
		$$|P_4|_{r_4,s}\leq \frac{\e}{E\gamma } C_P^2r_3^2N^{C_{5}d^6}\leq\frac{64\e^2d}{\gamma }C_P^2r^2N^{C_{5}d^6}.$$
		Therefore, when $k\geq4$, we will use induction to prove that there exists a constant $C_{P,1}$ such that $|P_{k}|'_{r_k,s}\leq d^{2k-6}(C_{P,1}r)^{k-2}N^{C_{5}(k-3)d^6} $ holds for $k\geq 4$. And 
		\begin{align*}
			|P_{k+1}|_{r_{k+1},s}&\leq\frac{\e}{\gamma E}d^{2k-6}(C_{P,1}r)^{k-2}N^{C_{5}(k-2)d^6}(\sum_{l=3}^{k}d^{2l-6}(C_{P,1}r)^{l-2}N^{C_{5}(l-3)d^6})\\
			&\leq \frac{16\e^2}{\gamma}d^{2k-5}(C_{P,1}r)^{k-2}N^{C_{5}(k-2)d^6}\frac{C_{P,1}r}{1-d^2C_{P,1}rN^{C_{P,1}d^6}}\\
			&\leq \frac{32\e^2}{\gamma}d^{2k-5}(C_{P,1}r)^{k-1}N^{C_{5}(k-2)d^6}\\
			&\leq d^{2k-4}(C_{P,1}r)^{k-1}N^{C_{5}(k-2)d^6}.
		\end{align*}
		Here we use the setting of $r,d$. Then 
		\begin{align*}
			|G_{k+2}|_{r_{k+1},s}&\leq\frac{1}{\gamma} d^{2k-4}(C_{P,1}r)^{k-1}N^{C_5(k-1)d^6}\\
			&\leq \frac{1}{d^2\gamma}(C_{P,1}rd^2N^{C_{5}d^6})^{k-1}\leq E
		\end{align*}
		by Lemma \ref{homo} and the setting of $r$.  
		Thus we have completed the inductive proof of the estimates for $P_k,G_{k+1}.$

		It follows from the definition of norm that $\sup_{u\in B^s(r_{k+1})}\Vert X_{G_{k+1}}\Vert_{s}\leq r_{k+1}|G_{k+1}|_{r_{k+1},s},$ which leads to the near-identity property of $\Phi_{G_{k+1}}$:
		$$\sup_{u\in B_s(r_{k+1})}\Vert (\Phi_{G_{k+1}}-Id)\circ(u)\Vert_s\leq\int_{0}^{1}\sup_{u\in B_s(r_{k+1})} \Vert X_{G_{k+1}}(u(T))\Vert_sdT\leq r_{k+1}E\leq r_{k}-r_{k+1} .$$
		Namely the transformation maps $B_s(r_{k+1})$ into $B_s(r_k)$.
		
		Besides, from the integral-type remainder  $$R_{X,G_{k+1}}=\frac{1}{n!}\int_{0}^{1}(1-T)^n(ad_{G_{k+1}}^{n+1}X)\circ\Phi_{G_{k+1}}^T\d T,X={G_{k+1},H_0},Z_k,P_k,$$
		we get 
		\begin{align*}
			|R_{X,G_k+1}|_{r_k,s}&\leq\frac{1}{n!}|X|_{r_k,s}(\frac{|G_{k+1}|}{2E})^n\\
			&\leq d^{2k-6}(C_{P,1}r)^{k-2}N^{C_{5}(k-3)d^p}(\frac{8\e d}{\gamma} d^{2k-6}(C_{P,1}r)^{k-2}N^{C_{5}(k-2)d^p})^n  \\
			&\leq  d^{2nk+2k-5n-6}(C_{P,1}r)^{(k-2)(n+1)}N^{C_{5}(n(k-2)+k-3)d^p}(\frac{8\e}{\gamma})^n           \\
			&\leq d^{2d-4} (C_{P,1}r)^{d-2}N^{C_{5}(d+k-5)d^p}(\frac{8e}{\gamma d})^n\\
			&\leq (C_{P,1}r)^{d-2}N^{2C_{5}d^7}
		\end{align*}
		by Lemma \ref{Lie}. Then by the iterative process involving $R_{k+1,d}$,
		\begin{align*}
			|R_{k+1,d}|_{r_{k+1},s}&\leq |R_{H_0,G_k+1}|_{r_k,s}+|R_{Z_k,G_k+1}|_{r_k,s}+|R_{P_k,G_k+1}|_{r_k,s}+|R_{k,d}\circ\Phi_{G_{k+1}}|_{r_k,s}\\
			&\leq 3(C_{P,1}r)^{d-2}N^{2C_{5}d^7}+(1+E)|R_{k,d}|_{r_k,s},\\
			\frac{|R_{k+1,d}|_{r_{k+1},s}}{(1+E)^{k+1}}&\leq\frac{3}{(1+E)^{k+1}} (C_{P,1}r)^{d-2}N^{2C_{5}d^7}+\frac{|R_{k,d}|_{r_k,s}}{(1+E)^k},\\
			|R_{k,d}|_{r_k,s}&\leq3\frac{(1+E)^{k-3}-1}{E}(C_{P,1}r)^{d-2}N^{2C_{5}d^7}+(1+E)^{k-3}C_Pr^{d-2}\\
			&\leq48\e d(\e^{\frac{1}{16\e}}-1)(C_{P,1}r)^{d-2}N^{2C_{5}d^7}+eC_Pr^{d-2}\\
			&\leq  r^{d-2}   N^{3C_{5}d^{7}}.
		\end{align*} 
		We thus derive the estimate of $R_{k,d}$.
		
		Since $R_{k,>}\in\mathcal{P}_{3,\infty}$, we can use Lemma \ref{cut} and the choice of $r$ to derive
		$|R_{d,>}|_{r_k,s}\leq\frac{rC_{R}}{\e^{(s-s_0)f(N)}}.$
		
		Finally, the transformation $\mathcal{T}^{(k)}=\Phi_{G_{4}}\circ...\circ\Phi_{G_{k+1}}:B_s(r_k)\to B_s(r_{3})$ is the desired transformation.
		
	\end{proof}

	\section{Normal Form Lemma}\label{sec:5}
	In this section, we balance the order of the two remaining terms in the Iteration Lemma to obtain the Normal Form Lemma.
	\begin{theorem}[Normal Form Lemma]\label{Normal}
		For $H$ as defined in (\ref{Hamiltonian}), let $d\gg3,P\in\mathcal{P}_{3,\infty}.$ Then there exist $N_{d}$ and a canonical transformation $\mathcal{T}_{d}$ such that for $s>s_0,$ 
		the following holds for any sufficiently small $r$:
		\begin{align*}
			&\mathcal{T}_{d}:B_s(r)\to B_s(2r),\\
			&\mathcal{T}_{d}^{-1}:B_s(2r)\to B_s(r),\\
			&H^{(d)}:=H\circ\mathcal{T}_d=H_0+Z_d+R_d,
		\end{align*}	 
		where
		\begin{enumerate}
			\item $Z_d\in\mathcal{P}_{3,d}$ is in the $N-$cutting normal form;
			\item $|R_d|_{r,s}\leq \e^{-C_{4}f(N(r))}$.
			
		\end{enumerate}
		
		The relationship between $N(r)$ and $r$ is implicitly defined in the proof.

		Moreover, $Z_d$ can be decomposed as $Z_d=Z_0+Z_{>}$, such that the index in $Z_0$ is in the case R0 and the index in $Z_>$ is in the case NR2, 
		and $$\sup_{u\in B_s(r)}\Vert(Id-\Pi^>)(X_{Z_>})\Vert_{s}\leq \e^{-C_{\mathtt{fin}}f(N(r))}. $$
		
	\end{theorem}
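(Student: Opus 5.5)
The plan is to apply Lemma \ref{iter} up to the final step $k=d$, take $\mathcal{T}_d:=\mathcal{T}^{(d)}$, and choose the truncation scale $N=N(r)$ so that the two remainders $R_{d,d}$ and $R_{d,>}$ have comparable size. We then set $R_d:=P_d+R_{d,d}+R_{d,>}$.

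\textbf{Step 1: the transformation and its inverse.} Lemma \ref{iter} (rescaled so that its innermost radius is $r$ and its outermost is $2r$) gives $\mathcal{T}^{(d)}:B_s(r)\to B_s(2r)$. Each factor $\Phi_{G_{k+1}}$ moves points by at most $r_{k+1}E$ in $B_s(r_{k+1})$. Since $E\le 1/(16\e d)$, the same estimate applied to the backward flows $\Phi_{G_{k+1}}^{-1}=\Phi_{-G_{k+1}}$, composed in reverse order, gives the mapping property of $\mathcal{T}_d^{-1}$. The generators are built from resonant projections only (Lemma \ref{homo}), so $Z_d=\sum_k Z_k^*$ contains only monomials of type R0 or R2, i.e.\ it is in $N$-normal form.

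\textbf{Step 2: balancing the remainders.} Lemma \ref{iter} gives three bounds:
\[
|P_d|_{r,s}\le d^{2d-6}(C_{P,1}r)^{d-2}N^{C_5(d-3)d^6},\qquad
|R_{d,d}|\le r^{d-2}N^{3C_5d^7},\qquad
|R_{d,>}|\le C_Rr\,\e^{-(s-s_0)f(N)}.
\]
The first two are of the form $\exp\bigl((d-2)\ln r+3C_5d^7\ln N\bigr)$. We fix $d$ as a function of $N$, and then choose $N$ so that
\[
(d-2)|\ln r|\ \ge\ 2\cdot 3C_5d^7\ln N+(s-s_0)f(N).
\]
This makes every term at most $\e^{-C_4f(N)}$, where $C_4$ is a fixed fraction of $s-s_0$; the factor $C_Rr$ is absorbed because $r$ is small. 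We must also keep the smallness hypothesis $rd^2C_6N^{C_5d^6}<1$ of Lemma \ref{iter}, which is weaker than the balance above. Concretely, we take $d\sim(|\ln r|/\ln N)^{1/6}$, up to constants, and let $N(r)$ solve $f(N)\approx|\ln r|\,d/(s-s_0)$, accepting that this defines $N(r)$ only implicitly. Optimizing this relation separately for each weight $f$ is what later produces the exponents $7/6$, $\frac{7q}{6q+1}$ and $s^{1/7}$ in Theorems 1--3, but that optimization is deferred to Section \ref{sec:6}.

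\textbf{Step 3: the splitting $Z_d=Z_0+Z_>$.} We write $Z_d=Z_0+Z_>$, with R0 monomials in $Z_0$ and R2 monomials in $Z_>$. Each monomial in $Z_>$ has the form $u_{J_1}u_{J_2}\prod_{l\ge3}u_{J_l}$ with $|j_1|,|j_2|>N$. A low-mode component of $X_{Z_>}$ is therefore obtained by differentiating with respect to a low variable, and still contains the factor $u^>_{J_1}u^>_{J_2}$. Using the weighted estimate behind Lemma \ref{cut}, this gives
\[
\|u^>\|_{s_0}\le \e^{-(s-s_0)f(N)}\|u\|_s .
\]
Measured in $\|\cdot\|_s$, a low-mode output costs at most $\e^{sf(N)}$, while the two high inputs gain $\e^{-2sf(N)}$ relative to the $s$-norm. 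Hence
\[
\sup_{B_s(r)}\|(Id-\Pi^>)X_{Z_>}\|_s\lesssim |Z_d|\,\e^{-sf(N)}\le \e^{-C_{\mathtt{fin}}f(N(r))}.
\]
The total size of $Z_d$ is at most $\sum_k|P_k|\lesssim r$ by Lemma \ref{iter}.

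\textbf{Main obstacle.} The delicate part is Step 2. The divisor loss $N^{C_5d^6}$ enters with a power that grows like $d^7$, so the balance between $d$, $N$ and $r$ has to be arranged consistently with both the smallness hypothesis of Lemma \ref{iter} and the tail term $\e^{-(s-s_0)f(N)}$. My first attempt is the choice $d^7\ln N\asymp d|\ln r|\asymp f(N)$ described above, checking afterwards that it is self-consistent for each weight.
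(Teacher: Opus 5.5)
Your proposal follows the paper's proof. You take $k=d$ in Lemma~\ref{iter}, tie $d$, $N$ and $r$ together so that $d^7\ln N\asymp d|\ln r|\asymp f(N)$ (the paper's $d^6\ln N=f(N)/d=|\ln r|/2$ after its rescaling by $C_4$), and control the low-mode part of $X_{Z_>}$ through the truncation estimate behind Lemma~\ref{cut}. You are in fact more careful than the paper in keeping $P_d$ inside $R_d$ and in checking the smallness hypothesis of Lemma~\ref{iter}.

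One point in Step~1 needs correcting. The backward-flow argument only yields an inclusion of the type $\mathcal{T}_d^{-1}(B_s(r))\subset B_s(2r)$. A near-identity map whose displacement is a small fraction of $r$ cannot send $B_s(2r)$ into $B_s(r)$. So the literal claim $\mathcal{T}_d^{-1}:B_s(2r)\to B_s(r)$, which the paper's proof never addresses, cannot be established by your argument or any other.
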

	
	\begin{proof}
		First, we set $k=d$ in Iteration Lemma \ref{iter}, the Hamiltonian becomes  $H'=H_0+Z_{d}+R_{d,d}+R_{d,>}$ with $$|R_{d,d}|_{r_k,s}\leq r^{d-2}N^{3C_{5}d^{7}},|R_{d,>}|\leq C_R\frac{r}{\e^{(s-s_0)f(N)}}.$$ 
		Next, we will adjust the parameters in the estimate of $R_{d,d}$ and $R_{d,>}$ to make them of the same order of magnitude, which allows them to be combined into a single remainder term. When $d>C_7,s>C_1,rC_R<1,d'=\frac{d}{C_{4}}$, the above estimate simplifies
		\begin{align*}
			|R_{d,d}|_{r_k,s}&\leq r^{d-2}N^{3C_{5}d^{7}}
			\leq (r^{d'}N^{d'^{7}})^{C_{4}}.
			\end{align*}
				Meanwhile,
				$$|R_{d,>}|\leq \e^{-C_{4}f(N)}.$$ 
				for the sake of simplicity, we continue to denote $r',d'$ by $r,d$. Now we impose the condition 
				$$(r(N)^{d^6})^d=\e^{-f(N)},$$
				namely
				\begin{equation}\label{order}
					d^6\ln(N)+\ln r=\frac{-f(N)}{d}.
				\end{equation}
				
				To equate the orders of magnitude, we set
				$$d^6\ln(N)=\frac{f(N)}{d}.$$
				Then specifying $f(x)$, we can derive the dependence of $N$ on $d$ and substitute it back to \eqref{order} to determine its dependence on $r$. Therefore the condition of Lemma \ref{iter} reduces to the condition that $r$ must be sufficiently small.

				Finally, by the cutting lemma and the definition of $Z_y$, the term $Z_>$ is of the same order of magnitude as $R_d$.

			\end{proof}

	We now analyze the order of remainder term in the Normal Form Lemma for parameter $r$, focusing on two representative cases of the function $f(x)$ that represent infinitely differentiable regularities. These cases are formalized in the following propositions.

		\begin{proposition}
				When $f(x)=x^\theta,$ with $\theta<1,\e^{f(N)}$ has same order to $\exp(\frac{|\ln r|^{\frac{7}{6}}}{(\ln|\ln r|)^{\frac{1}{6}}})$. \\
				
			\end{proposition}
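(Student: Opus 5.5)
The plan is to make the implicit relations at the end of the proof of Theorem \ref{Normal} explicit for $f(x)=x^\theta$, solving for $N$, $d$ and $f(N)$ as functions of $r$ up to multiplicative constants (depending only on $\theta$). The phrase ``same order'' will mean that $f(N)$ and $\frac{|\ln r|^{7/6}}{(\ln|\ln r|)^{1/6}}$ are comparable up to such constants as $r\to 0$.

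\textbf{Step 1: the balancing equation.} Substitute $f(N)=N^\theta$ into the balancing condition $d^6\ln N=f(N)/d$. This gives
\[
N^\theta = d^7\ln N .
\]
Take logarithms: $\theta\ln N=7\ln d+\ln\ln N$. Since $\ln\ln N=o(\ln N)$ as $d\to\infty$, we get $\ln N=\frac{7}{\theta}\ln d\,(1+o(1))$. Substituting back,
\[
f(N)=N^\theta=\tfrac{7}{\theta}\,d^7\ln d\,(1+o(1)).
\]
Existence of a solution $N=N(d)$ follows by monotonicity: $N^\theta/\ln N$ is increasing for large $N$ and tends to infinity.

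\textbf{Step 2: relating $d$ to $r$.} Plug the balancing condition into \eqref{order}. Since $d^6\ln N=f(N)/d$, the equation reads $\ln r=-2f(N)/d$, so
\[
|\ln r| = \frac{2f(N)}{d}=\frac{14}{\theta}\,d^6\ln d\,(1+o(1)).
\]
This step is where the logarithmic correction arises, so it is the one needing care. I would first take logarithms to obtain $\ln|\ln r|=6\ln d\,(1+o(1))$, i.e.\ $\ln d\sim\frac16\ln|\ln r|$. Then I would feed this back to get
\[
d^6\sim \frac{\theta}{14}\cdot\frac{6\,|\ln r|}{\ln|\ln r|}, \qquad\text{so}\qquad d\asymp \Bigl(\frac{|\ln r|}{\ln|\ln r|}\Bigr)^{1/6}.
\]
The $o(1)$ terms must be controlled uniformly, for example with two-sided bounds valid for $d$ larger than a constant depending on $\theta$. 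The map $d\mapsto d^6\ln d$ is increasing, so $d=d(r)$ is well defined and $d\to\infty$ as $r\to 0$. This is consistent with the requirement $d>C_7$ in Lemma \ref{iter}.

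\textbf{Step 3: conclusion.} Combining the two steps,
\[
f(N)=\tfrac12\, d\,|\ln r|\asymp |\ln r|\Bigl(\frac{|\ln r|}{\ln|\ln r|}\Bigr)^{1/6}=\frac{|\ln r|^{7/6}}{(\ln|\ln r|)^{1/6}}.
\]
Hence $\e^{f(N)}$ has the stated order, up to constants in the exponent that are absorbed into $C_4$.

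Finally, I would check that the smallness condition $rd^2C_6N^{C_5d^6}<1$ of Lemma \ref{iter} holds for small $r$. We have $\ln\bigl(d^2N^{C_5d^6}\bigr)\lesssim d^6\ln d$, while $|\ln r|\asymp d^6\ln d$ with a large constant $14/\theta$. The rescaling $d'=d/C_4$ made in the proof of Theorem \ref{Normal} gives the freedom to keep that constant dominant. Verifying this compatibility of constants is the only other point I expect to need attention.
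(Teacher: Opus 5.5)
Your proposal is correct and follows essentially the same route as the paper. Both solve the balancing relation $N^\theta=d^7\ln N$ on the large-$N$ branch, substitute into $|\ln r|=2N^\theta/d$, and invert using $\ln|\ln r|\sim 6\ln d$; the paper expresses the first step through the Lambert function, $N^\theta=-\frac{d^7}{\theta}W_{-1}(-\theta/d^7)$ with $W_{-1}(y)\sim\ln(-y)$, while you bootstrap the logarithms by hand, and you correctly keep the factor $2$ in $|\ln r|$ that the paper drops, which only changes the limiting constant.

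Your closing check of the smallness condition is not needed for this proposition, and it only works after the rescaling $d\mapsto d/C_4$. Before that rescaling, $\ln N^{C_5d^6}\approx\frac{7C_5}{\theta}d^6\ln d$ exceeds $|\ln r|\approx\frac{14}{\theta}d^6\ln d$, because $C_5\geq 20$.
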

			\begin{proof}
				When $f(x)=x^\theta,$ we proceed with the following calculations
				\begin{align*}
					d^{6}(\ln N)&=\frac{N^\theta}{d}=\frac{|\ln r|}{2}, \\
					\frac{\ln N^{\theta}}{\theta}&=\frac{N^{\theta}}{d^7},\\ 
					\ln\frac{\theta N^{\theta}}{d^7}&=\frac{\theta N^\theta}{d^7}+\ln\frac{\theta}{d^7},\\
					-\frac{\theta N^\theta}{d^7}\e^{-\frac{\theta N^\theta}{d^7}}&=-\frac{\theta}{d^7},\\
					W_{-1}(-\frac{\theta}{d^7})&=-\frac{\theta N^\theta}{d^7},\\
					N^\theta&=-\frac{d^7}{\theta}W_{-1}(-\frac{\theta}{d^7}).
				\end{align*}	
				To compare the order of stability time expressed in $r$, we have
				\begin{align*}
					|\ln r|&=-\frac{d^6}{\theta}W_{-1}(-\frac{\theta}{d^7}),\\
					\ln|\ln r|&=6\ln d-\ln\theta+\ln(-W_{-1}(-\frac{\theta}{d^7})) .
					\end{align*}
						Then we get
						\begin{align*}
							\frac{N^\theta(\ln|\ln r|)^{\frac{1}{6}}}{|\ln r|^{\frac{7}{6}}}&=(\frac{6\ln d-\ln\theta+\ln(-W_{-1}(-\frac{\theta}{d^7}))}{-\frac{1}{\theta}W_{-1}(-\frac{\theta}{d^7})})^{\frac{1}{6}}\\
							&=\left(\frac{\frac{6}{7}\ln\frac{\theta}{d^7}+\frac{\ln \theta}{7}-\ln(-W_{-1}(-\frac{\theta}{d^7}))}{\frac{1}{\theta}W_{-1}(-\frac{\theta}{d^7})}\right)^{\frac{1}{6}}.
						\end{align*}
						We claim that $\lim\limits_{y\to0^-}\frac{\ln(-y)}{W_{-1}(y)}=1$, the conclusion then follows from the limit:
						$$\lim\limits_{d\to+\infty}\frac{N^\theta(\ln|\ln r|)^{\frac{1}{6}}}{|\ln r|^{\frac{7}{6}}}=(\frac{6\theta}{7})^{\frac{1}{6}}.$$
						To prove the claim, we calculate the limit by substituting $y=xe^x$:
						
						$$\lim\limits_{y\to0^-}\frac{\ln(-y)}{W_{-1}(y)}=\lim\limits_{x\to-\infty}=\frac{\ln(-x)+x}{x}=1.$$
					\end{proof}

					\begin{proposition}
						When $f(x)=(\ln (x))^q,$ with $q>1,\e^{f(N)}$ is of the same order as $|\ln r|^{\frac{7q}{6q+1}}$.
					\end{proposition}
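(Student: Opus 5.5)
The proof follows the Gevrey case above, but it is simpler: for $f(x)=(\ln x)^q$ the balancing relation can be solved explicitly, with no Lambert $W$ function. As there, I read the claim at the level of exponents: $f(N)=\ln \e^{f(N)}$ is comparable to $|\ln r|^{\frac{7q}{6q+1}}$. This is what is used later for the stability time $\exp(C_4|\ln\varepsilon|^{\frac{7q}{6q+1}})$. I will use the two balancing conditions from the proof of Theorem \ref{Normal}:
\[
d^6\ln N=\frac{f(N)}{d}=\frac{|\ln r|}{2},
\]
where the second equality comes from \eqref{order} once the first is imposed.

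\textbf{Step 1: solve for $N$ in terms of $d$.} Set $L:=\ln N$, so that $f(N)=L^q$. The first relation reads $d^7 L=L^q$. Since $q>1$, this gives $L^{q-1}=d^7$, that is,
\[
\ln N=d^{\frac{7}{q-1}},\qquad f(N)=d^{\frac{7q}{q-1}}.
\]

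\textbf{Step 2: express $|\ln r|$ in terms of $d$.} From the second relation,
\[
|\ln r|=2d^6\ln N=2d^{6+\frac{7}{q-1}}=2d^{\frac{6q+1}{q-1}}.
\]
This inverts to $d=(|\ln r|/2)^{\frac{q-1}{6q+1}}$.

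\textbf{Step 3: substitute back.} Inserting this into Step 1 gives
\[
f(N)=\Bigl(\frac{|\ln r|}{2}\Bigr)^{\frac{q-1}{6q+1}\cdot\frac{7q}{q-1}}=2^{-\frac{7q}{6q+1}}\,|\ln r|^{\frac{7q}{6q+1}},
\]
so $f(N)$ is a fixed constant multiple of $|\ln r|^{\frac{7q}{6q+1}}$, as claimed. The constant $2^{-7q/(6q+1)}$, together with the rescaling $d'=d/C_4$ made in the proof of Theorem \ref{Normal}, is absorbed into $C_4$.

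\textbf{Main obstacle: admissibility of $N$ and $d$.} The algebra is immediate. The point needing care is checking that the resulting $N=\exp(d^{7/(q-1)})$ and $d$ are admissible. First, $d$ must be an integer exceeding $C_7$; this holds once $r$ is small, since $d\to\infty$ as $r\to0$, and rounding $d$ to an integer changes the exponents only by bounded factors. Second, the smallness condition $r d^2C_6N^{C_5d^6}<1$ of Lemma \ref{iter} must hold. Here $\ln\bigl(N^{C_5d^6}\bigr)=C_5d^6\ln N=\tfrac{C_5}{2}|\ln r|$. So after the rescaling $d\mapsto d/C_4$ with $C_4$ large enough relative to $C_5$, the factor $N^{C_5d^6}$ is at most $r^{-1/2}$. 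The condition then reduces to $d^2C_6r^{1/2}<1$, which holds for $r$ small because $d$ grows only like a power of $|\ln r|$.
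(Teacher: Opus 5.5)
Your proof is correct and is the same computation as the paper's: solve $d^7=(\ln N)^{q-1}$, get $(\ln N)^q=d^{7q/(q-1)}$ and $|\ln r|\propto d^{(6q+1)/(q-1)}$, then eliminate $d$. You keep the factor $2$ from $|\ln r|=2d^6\ln N$, which the paper drops, and you correctly read the claim as $f(N)\sim|\ln r|^{\frac{7q}{6q+1}}$. One small slip in your extra admissibility remark: in the paper's convention $d'=d/C_4$ with $C_4=\frac{1}{2\sqrt[6]{6C_5}}<1$, the actual iteration order is $C_4$ times the balanced parameter, so what gives $N^{C_5d^6}\le r^{-1/2}$ is $C_5C_4^6\le 1$, i.e. $C_4$ small relative to $C_5$, not large.
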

					\begin{proof}
						When $f(x)=(\ln (x))^q,$ we proceed with the following calculations  
						\begin{align*}
							d^6\ln(N)&=\frac{\ln(N)^q}{d},\\
							d^7&=\ln(N)^{q-1},\\
							(\ln N)^q&=d^{\frac{7q}{q-1}},\\
							|\ln r|&=d^{\frac{6q+1}{q-1}},\\
							(\ln N)^q&=|\ln r|^{\frac{7q}{6q+1}}.
						\end{align*}
					\end{proof}
					
					For the case of finite differentiability, where $f(x) = \ln(x)$, the remainder is addressed in the following proposition.
					
					\begin{proposition}
						When $f(x)=\ln(x),s=d^7$, the remain terms $|R_{d,>}|,|R_{d,d}|\leq \e^{-C_{4}f(N)}\leq  r^{\frac{2C_{4}}{3}s^{\frac{1}{7}}}.$ 
					\end{proposition}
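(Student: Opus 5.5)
We would follow the same balancing scheme as in the two preceding propositions, now with $f(x)=\ln x$. Start from the two remainders produced by the Iteration Lemma (Lemma \ref{iter}) with $k=d$, rewritten in the proof of Theorem \ref{Normal}:
\[
|R_{d,d}|\le \bigl(r^{d}N^{d^{7}}\bigr)^{C_4},\qquad |R_{d,>}|\le C_R\, r\,\e^{-(s-s_0)\ln N}=C_R\,r\,N^{-(s-s_0)} .
\]
Here $d$ already denotes the rescaled $d'=d/C_4$. The key point is that with $f=\ln$ the high-frequency tail decays only polynomially in $N$, so the balancing condition $d^6\ln N=f(N)/d$ of the previous propositions cannot be used. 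It would read $d^7=1$. Instead, the regularity index $s$ must supply the growth, and the coupling $s=d^7$ in the statement does exactly this. The high-frequency exponent $s-s_0$ is then comparable to the exponent $d^7$ appearing in $R_{d,d}$.

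\textbf{Step 1: choose $N$ as a function of $r$.} Impose the analogue of \eqref{order}, namely $r^{d}N^{d^7}=N^{-c\,d^7}$ for a fixed fraction $c$. This amounts to $(1+c)\,d^7\ln N=d\,|\ln r|$. Equivalently, take $N=r^{-a}$ with $a=\frac{d}{(1+c)d^7}=\frac{1}{(1+c)d^6}$. The constant $c$ is chosen so that $s-s_0\ge c\,d^7$, which holds for $s=d^7$, $c<1$ and $d>C_7$.

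\textbf{Step 2: express both remainders as powers of $r$.} Both remainders are then bounded by $N^{-C_4 c\, d^7}$, up to absorbing $C_R$ and the factor $r$ (for $r$ small, $C_Rr<1$). This is $\e^{-C_4 c\,d^7 f(N)}$, and it is dominated by $\e^{-C_4 f(N)}$. In terms of $r$ it equals $r^{C_4 c\,d^7 a}=r^{C_4\frac{c}{1+c}d}$. Since $d=s^{1/7}$, choosing $c=2$ gives exactly $r^{\frac{2C_4}{3}s^{1/7}}$. This explains the constant $\tfrac23$, which is $c/(1+c)$ with $c=2$.

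\textbf{Main obstacle.} The hard part is reconciling $c=2$ with the requirement $s-s_0\ge c\,d^7$ when $s=d^7$. The fix is to reinterpret the statement's normalization "$s=d^7$" up to the constant rescaling $d'=d/C_4$ already made in the proof of Theorem \ref{Normal}, so that effectively $s-s_0\ge 2(d')^7$ once $s>C_1$. The other option is to absorb the discrepancy into $C_4$.

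\textbf{Step 3: check the smallness hypothesis.} Finally, verify that the hypothesis $r d^2C_6N^{C_5d^6}<1$ of Lemma \ref{iter} holds. With $N=r^{-1/(3d^6)}$ this quantity is $d^2C_6\,r^{1-C_5/3}$. This is small for $r\to0$ only if the exponent of $N$ is kept small relative to $d^6$, i.e. if we pick $a\le 1/(2C_5d^6)$. Doing so changes $\tfrac{c}{1+c}$ only by constants that can again be absorbed into $C_4$. This bookkeeping of constants is the step to handle with care.
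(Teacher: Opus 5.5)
Your Steps 1--2 with $c=2$ reproduce the paper's computation. The paper imposes $(rN^{d^6})^d=N^{-2s}$, which for $s=d^7$ is your balance with $c=2$. This gives $N=r^{-1/(3d^6)}$ and $N^{-2s}=r^{2d/3}$, which is then raised to the power $C_4$. The paper also silently replaces the tail $N^{-(s-s_0)}$ of Lemma~\ref{cut} by $N^{-2s}$, and you were right to notice this. The gap is that your proposal does not close. It ends on two remedies, both of which fail, for two ``obstacles'' that are really bookkeeping errors.

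\textbf{First obstacle.} The tail never needs $s-s_0\ge c\,d^7$. Since $|R_{d,d}|\le (r^dN^{d^7})^{C_4}=N^{-C_4cd^7}$, you only need $C_Rr\,N^{-(s-s_0)}\le N^{-C_4cd^7}$, that is, $s-s_0\ge 2C_4d^7$ when $c=2$. Because $C_4=\frac{1}{2\sqrt[6]{6C_5}}<\frac12$, this holds for $s=d^7$ as soon as $d^7(1-2C_4)\ge s_0$. Equivalently, $C_R\,r^{1+(d^7-s_0)/(3d^6)}\le r^{\frac{2C_4}{3}d}$ for small $r$, since $\frac{2C_4}{3}<\frac13$. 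Your proposed fix points the wrong way. If $s=d^7$ for the unrescaled $d$, then $C_4<1$ gives $d'=d/C_4>d$, so $2(d')^7>2s>s-s_0$ and the inequality gets worse. Also, $C_4$ is pinned down in Appendix A, so nothing can be absorbed into it.

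\textbf{Step 3.} You inserted the rescaled $d$ into the hypothesis of Lemma~\ref{iter}, but that lemma is stated for the original degree $d_{\mathrm o}=C_4d$. Since $C_4^6=\frac{1}{384C_5}$, the correct computation is $N^{C_5d_{\mathrm o}^6}=r^{-C_5C_4^6/3}=r^{-1/1152}$. Hence $r\,d_{\mathrm o}^2C_6N^{C_5d_{\mathrm o}^6}\to0$ as $r\to0$, with no change to $N$; this smallness of $C_5C_4^6$ is exactly what the rescaling provides. Your remedy $a\le 1/(2C_5d^6)$ would actually break the statement. The tail would then give only about $r^{1+d/(2C_5)}$, and $\frac{1}{2C_5}<\frac{2C_4}{3}$ because $C_5\ge 20$, so the exponent $\frac{2C_4}{3}s^{1/7}$ would be lost.

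With these two corrections, your Steps 1--2 with $c=2$ go through unchanged and give the claimed bound.
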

					\begin{proof}
						Notice that 
						\begin{align*}
							(rN^{d^6})^d&=\e^{-2s\ln(N)}=N^{-2s},\\
							N&=(\frac{1}{r})^{\frac{d}{d^7+2s}},\\
							N^{-2s}&=r^{\frac{2ds}{2s+d^7}}.
						\end{align*}
						Since $$\frac{2ds}{2s+d^7}=\frac{1}{\frac{1}{d}+\frac{d^6}{2s}}\leq\frac{\sqrt[7]{2s}}{2},$$ 
						$d=\sqrt[7]{2s}$ is the optimal choice of $d$, it yields $N^{-2s}=r^{\frac{2d}{3}}=r^{\frac{2}{3}{s}^{\frac{1}{7}}}$.
					\end{proof}

					\section{Stability Time}\label{sec:6}
					To apply the Normal Form Lemma, we set $w=\mathcal{T}^{(d)}(u),w_0=\mathcal{T}^{(d)}(u_0),$ and we consider the Cauchy problem
					\begin{equation}\label{Cau}
						\dot{w}=X_{H^{(d)}}(w),w(0)=w_0.
					\end{equation}
					Let $z(t)$ be the solution of \eqref{Cau} and define
					$$T_r:=sup\{|t|\in\mathbb{R}^+\mid\Vert w\Vert_s\leq C_{2}r\}$$
					as the escape time of the solution from the ball of radius $R$. Next, we split the normal form as $Z_d=Z_0+Z_>$, as stated in Theorem \ref{Normal}. We obtain the following system of equations:
					\begin{align}
						\dot{w}^<&=\Lambda w^{<}+X_{Z_0}(w^<)+\Pi^<X_{Z_>}(w^<,w^>)+\Pi^{<}X_{R_d}(w^<,w^>),\label{low}  \\
						\dot{w}^>&=\Lambda w^>+\Pi^>X_{Z_>}(w^<,w^>)+\Pi^>X_{R_d}(w^<,w^>).\label{high}
					\end{align}
					We first give a standard priori estimate on the low frequency part $w^<$ of the solution of \eqref{Cau} based on \eqref{low}. 
					\begin{proposition}
						For $s>C_1$, and any real $w_0$ with $\Vert w_0\Vert_{s}<r $ in \eqref{Cau}, we have
						$$\Vert w^<(t)\Vert_s\leq\Vert w^>(0)\Vert_{s}+r\e^{-C_{4}f(N(r))}|t|, \forall|t|\leq T_r.$$
					\end{proposition}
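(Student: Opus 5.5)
The plan is to run an energy estimate on the weighted low-mode norm $\mathcal{N}(t):=\Vert w^<(t)\Vert_s^2=\sum_{|j|\le N}|w_j(t)|^2\e^{2sf(\langle j\rangle)}$ along the flow of \eqref{low}, for $|t|\le T_r$. The aim is to show that the only contributions to $\frac{d}{dt}\mathcal{N}$ come from $\Pi^<X_{Z_>}$ and $\Pi^<X_{R_d}$, and that both are exponentially small. Then $\Vert w^<(t)\Vert_s$ grows at most linearly with slope $r\e^{-C_4 f(N(r))}$. The constant term in the statement should be the initial size $\Vert w^<(0)\Vert_s$; I will bound it by $\Vert w_0\Vert_s<r$. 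Since $\Pi^<$ commutes with the linear flow, the $\Lambda w^<$ term only rotates each $w_j$, $|j|\le N$, by a phase. It therefore leaves every $|w_j|^2$, and hence $\mathcal{N}$, invariant.

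First, the $Z_0$ contribution vanishes. By Theorem \ref{Normal}, $Z_0$ contains only monomials of type R0: all indices are low and $\sum_l\sigma_l\omega_{j_l}=0$ exactly. Since $m\notin\mathcal{M}$, the non-degeneracy part of Assumption \ref{ass:freq} (through Lemma \ref{nonres}) forces such an exact resonance to be trivial as a function of $m$. Concretely, the multi-index must split into pairs $(j,+1),(j',-1)$ with $\omega_j\equiv\omega_{j'}$, which for our radial frequencies means $|j|=|j'|$. Hence $f(\langle j\rangle)=f(\langle j'\rangle)$, so the weight is constant on each resonant cluster. A monomial built from such pairs Poisson-commutes with $\sum_{|j|=\rho}|w_j|^2$ for every sphere radius $\rho$. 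Since $\mathcal{N}$ is a positive combination of these sphere masses, $\{\mathcal{N},Z_0\}=0$.

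Next I bound the two remaining terms on $B_s(C_2r)$, which contains $w(t)$ for $|t|\le T_r$. By the last assertion of Theorem \ref{Normal}, $\Vert\Pi^<X_{Z_>}\Vert_s\le \e^{-C_{\mathtt{fin}}f(N(r))}$ after rescaling the radius. By the definition of $|\cdot|_{r,s}$ and item 2 of Theorem \ref{Normal}, $\Vert\Pi^<X_{R_d}\Vert_s\le \Vert X_{R_d}\Vert_s\le r|R_d|_{r,s}\le r\e^{-C_4f(N(r))}$. Taking $C_4\le C_{\mathtt{fin}}$ and absorbing a factor $r$ by shrinking the constants, I get
\[
\tfrac{d}{dt}\mathcal{N}=2\,\mathrm{Re}\,\langle w^<,\Pi^<X_{Z_>}+\Pi^<X_{R_d}\rangle_s\le 2\Vert w^<\Vert_s\, r\e^{-C_4f(N(r))}.
\]
Dividing by $2\Vert w^<\Vert_s$ (or working with $\sqrt{\mathcal{N}+\delta}$ and letting $\delta\to0$) gives $\big|\frac{d}{dt}\Vert w^<\Vert_s\big|\le r\e^{-C_4f(N(r))}$. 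Integrating from $0$ to $t$ yields the claim.

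The main obstacle is the first step: justifying that exact low-mode resonances are pairings with equal weights. Here I would invoke the measure-zero exclusion of $\mathcal{M}$ together with the Wronskian-type determinant bound, which shows that a nontrivial combination $\sum\sigma_l\omega_{j_l}(m)$ cannot vanish identically. If that structural fact failed, I would instead use the super-action $\sum_{|j|\le N}|w_j|^2\e^{2sf}$ restricted to clusters on which $f$ is constant. I would also need to check that the momentum and pairing structure is compatible with the weights.
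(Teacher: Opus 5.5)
Your proposal is correct and follows the paper's argument: an energy estimate for $\Vert w^<\Vert_s^2$ in which the $H_0$ and $Z_0$ contributions drop out, the $Z_>$ and $R_d$ contributions are bounded through the Normal Form Lemma, and the result is integrated in time; you also rightly read the constant term as $\Vert w^<(0)\Vert_s$, since the literal $\Vert w^>(0)\Vert_s$ fails at $t=0$ when $w^>(0)=0\neq w^<(0)$. Your sphere-mass justification of $\{\Vert w^<\Vert_s^2,Z_0\}=0$ is sharper than the paper's claim $\{|w_{(j,+)}|^2,Z_0\}=0$, which fails for radial frequencies (e.g.\ $u_{(1,0)}u_{(-1,0)}\bar u_{(0,1)}\bar u_{(0,-1)}$ is an R0 monomial when $\mathsf{d}\ge 2$), and your Cauchy--Schwarz step is what gives linear growth of the norm itself rather than of its square.
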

					\begin{proof}
						Since $\{|w_{(j,+)}|^2,Z_0\}=0,$ we have
						\begin{align*}
							\frac{\d}{\d t
							}\Vert w^<\Vert_s^2&=\{\Vert w^<\Vert_s^2,Z_2\}+\{\Vert w^<\Vert_s^2,R_d\}\\
							&=\sum_{J\in\mathcal{Z}}\frac{\partial}{\partial u_{J}}(\Vert w^<\Vert_s^2)\cdot(X_{Z_>}(w^<,w^>)+X_{R_d}(w^<,w^>))\\
							&\leq r(|Z_>|'_{2r,s}+|R_d|'_{2r,s})=r\e^{-C_{4}f(N(r))}.
						\end{align*}
						The last inequality follows from the definition of the norm $|\cdot|'_{r,s}.$
					\end{proof}
					We now proceed to estimate the high-frequency modes.
					\begin{proposition}
						For $s>s_0$ and any real $w_0$ with $\Vert w_0\Vert_{s}<r\leq C_{6}$ in \eqref{Cau}, we have
						$$\Vert w^>(t)\Vert_s\leq C_{8}(\Vert w^>(0)\Vert_{s}+r\e^{-C_{4}f(N(r))}|t |), \forall|t|\leq T_r.$$
					\end{proposition}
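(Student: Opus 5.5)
The estimate for the high modes will come from an almost-conservation law adapted to the block partition $\mathcal{Z}=\bigcup_\alpha\Omega_\alpha$, since $\Vert w^>\Vert_s^2$ itself is not conserved by $Z_>$. For each block $\Omega_\alpha$ with $\alpha\neq0$ lying in the high region, I will consider the block mass
\[
\mathcal{B}_\alpha(w):=\sum_{J=(j,1)\in\Omega_\alpha,\,|j|>N}|w_j|^2 .
\]
Each monomial of $Z_>$ is of type R2, so it contains exactly one factor $w_{J_1}$ and one factor $\bar w_{J_2}$ with $J_1,J_2$ in the same block $\Omega_\alpha$. Its Poisson bracket with $\mathcal{B}_\alpha$ is therefore proportional to $(\sigma_1+\sigma_2)=0$. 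Hence $\{\mathcal{B}_\alpha,Z_>\}=0$ for every $\alpha$, and likewise $\{\mathcal{B}_\alpha,H_0\}=0$ and $\{\mathcal{B}_\alpha,Z_0\}=0$, since $Z_0$ only involves low modes. The only contribution to $\frac{\d}{\d t}\mathcal{B}_\alpha$ then comes from $R_d$, together with the part of $Z_>$ where a high index falls below $N$. The latter is the part controlled by the last estimate of Theorem \ref{Normal}.

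Next I will assemble the weighted quantity $\mathcal{E}(w):=\sum_{\alpha\neq0}\e^{2sf(\lambda_\alpha)}\mathcal{B}_\alpha(w)$, with $\lambda_\alpha:=\inf_{J\in\Omega_\alpha}|J|$. By property 2 of the partition, each block has width at most $C_1^*$. For $|j|\geq N$ large, $f(|j|)-f(\lambda_\alpha)\leq C_1^* f'(\lambda_\alpha)$, which is uniformly bounded for all three choices $f=x^\theta$ ($\theta<1$), $f=(\ln x)^q$ and $f=\ln x$. This gives the equivalence
\[
C_8^{-1}\Vert w^>\Vert_s^2\leq\mathcal{E}(w)\leq C_8\Vert w^>\Vert_s^2 ,
\]
where $C_8=\exp(2s\sup_{x\ge N}C_1^*f'(x))$ depends only on $s$ and $C_1^*$. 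Blocks straddling the threshold $N$ are handled by counting only their modes with $|j|>N$, as in the definition of $\mathcal{B}_\alpha$. Differentiating along \eqref{high} and using the commutation above, I expect
\[
\Bigl|\frac{\d}{\d t}\mathcal{E}\Bigr|\leq C\,\Vert w\Vert_s\bigl(\sup_{B_s(2r)}\Vert X_{R_d}\Vert_s+\sup_{B_s(2r)}\Vert(\mathrm{Id}-\Pi^>)X_{Z_>}\Vert_s\bigr)\leq C r\cdot r\,\e^{-C_4f(N(r))}
\]
for $|t|\leq T_r$, by Theorem \ref{Normal} and the definition of $|\cdot|_{r,s}$. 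Integrating in time, using $\sqrt{a+b}\le\sqrt a+\sqrt b$, and transferring back to $\Vert w^>\Vert_s$ through the equivalence constant $C_8$ gives the claimed bound. The constant $C_8$ absorbs the square-root loss, since the integrated bound is of the form $r^2\e^{-C_4 f}|t|$ divided by a lower bound of order $r$ on the relevant quantity. Alternatively, I can bound $\frac{\d}{\d t}\sqrt{\mathcal{E}}$ directly, as was done for the low modes.

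The main obstacle is the exact cancellation $\{\mathcal{B}_\alpha,Z_>\}=0$ near the cutoff $N$. A resonant monomial may couple $J_1$ with $|j_1|>N$ to $J_2$ in the same block with $|j_2|\le N$ if the block straddles $N$. Such terms break the cancellation, and they are precisely what the estimate $\sup\Vert(\mathrm{Id}-\Pi^>)X_{Z_>}\Vert_s\le\e^{-C_{\mathtt{fin}}f(N)}$ in Theorem \ref{Normal} is meant to control. I would first try to show that they contribute at most $r\,\e^{-C_{\mathtt{fin}}f(N)}$ to $\frac{\d}{\d t}\mathcal{E}$. If that fails, I would shift $N$ slightly so that it falls in a gap between blocks, which the bounded block width $C_1^*$ permits; then no block straddles the cutoff and the cancellation is exact.
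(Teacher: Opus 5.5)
Your argument is correct in substance but takes a different route from the paper's. The paper works with the linear propagator. It writes $\Pi^>X_{Z_>}(w^<,w^>)=\mathcal{L}(w^<(t))w^>$ and notes that $\mathcal{L}(t)$ is block diagonal. Together with $\Lambda$, it generates a flow $\mathcal{W}(t,\tau)$ that preserves each $\Vert w_\alpha\Vert_{\ell^2}$. Using the bounded block width, the paper turns this into $\Vert\mathcal{W}(t,\tau)v\Vert_s\le C_8\Vert v\Vert_s$, and then treats $\Pi^>X_{R_d}$ as a forcing term in the Duhamel formula.

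You instead use the weighted block mass $\mathcal{E}=\sum_\alpha\e^{2sf(\lambda_\alpha)}\mathcal{B}_\alpha$ as an almost-conserved quantity. Conservation then reduces to the algebraic identities $\{\mathcal{B}_\alpha,Z_>\}=\{\mathcal{B}_\alpha,H_0\}=\{\mathcal{B}_\alpha,Z_0\}=0$, so you never construct the non-autonomous flow whose coefficients depend on $w^<(t)$. Your mean-value comparison of the weights also gives an equivalence constant uniform in $N$ for all three choices of $f$, without invoking Lemma \ref{weight}. Take the supremum of $f'$ over $x\ge N-C_1^*$, since straddling blocks have $\lambda_\alpha<N$. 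The advantage of the Duhamel route is that linear growth in $|t|$ comes for free; you have to extract it by hand, and that is the one place your write-up needs repair.

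Your first justification of linear growth is wrong. Bounding $|\frac{\d}{\d t}\mathcal{E}|$ by $C\Vert w\Vert_s\,r\e^{-C_4f(N)}$ only gives $\Vert w^>(t)\Vert_s\le C_8\Vert w^>(0)\Vert_s+Cr(\e^{-C_4f(N)}|t|)^{1/2}$. This is weaker than the claim whenever $\e^{-C_4f(N)}|t|<1$. There is also no lower bound of order $r$ on $\Vert w^>\Vert_s$ to divide by; take $w^>(0)=0$. Use your alternative instead. Since only $R_d$ contributes, Cauchy--Schwarz with the weights $\e^{2sf(\lambda_\alpha)}\le\e^{2sf(|j|)}$ gives $|\frac{\d}{\d t}\mathcal{E}|\le 2\sqrt{\mathcal{E}}\,\Vert\Pi^>X_{R_d}(w)\Vert_s$. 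Hence $\frac{\d}{\d t}\sqrt{\mathcal{E}}\le\sup_{B_s(C_2r)}\Vert X_{R_d}\Vert_s$, and integrating yields the linear bound.

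The obstacle you anticipate does not arise. An R2 monomial has $|j_1|,|j_2|>N$ by definition; a same-block pair with one index $\le N$ has $\mathsf{s}=1$ and was already removed as NR1. Moreover, $\mathcal{E}$ depends only on high modes, so $\frac{\d}{\d t}\mathcal{E}$ involves only $\Pi^>\dot w$, and $\{\mathcal{E},Z_>\}=0$ exactly. The estimate on $(\mathrm{Id}-\Pi^>)X_{Z_>}$ in Theorem \ref{Normal} serves the low-mode equation, not this one. No shifting of $N$ is needed, and bounded block width alone would not guarantee a gap between blocks anyway.
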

					\begin{proof}
						First, we define a family of linear operators $\mathcal{L}(w^<):\Pi^>W_s\to\Pi^>W_s$  such that $X_{Z_>}(w^<,w^>)=\mathcal{L}(w^<)w^>$, and denote $\mathcal{L}(t):=\mathcal{L}(w^<(t)).$ 
						
						Then for any $w^>\in\Pi^>W_s,$ we introduce the projectors defined as follows:
						$$\Pi_\alpha:\Pi^>W_s\to\Pi^>W_s\ ,  (w_{(j,\sigma)})_{(j,\sigma)}\mapsto (w_{(j,\sigma)} \chi_{\Omega_{\alpha}}(j))_{(j,\sigma)},$$ 
						where $\chi_{\Omega_{\alpha}}$ is the indicator function on $\Omega_{\alpha}.$
						Then we can split $w$ as follows:
						$$\forall w\in\Pi^>W_s,\ w=\sum_{\alpha}w_\alpha,\ w_\alpha:=\Pi_{\alpha}w.$$
						Similarly, by the definition of case NR2, $\mathcal{L}(t)$ has a block-diagonal structure and can be written as
						$$\mathcal{L}(t)=\sum_{\alpha}\mathcal{L}_{\alpha}(t),\ \mathcal{L}_\alpha(t)=\Pi_\alpha\mathcal{L}(t)\Pi_{\alpha}.$$
						For any block $\Omega_{\alpha}$ we define 
						$|\alpha|=\inf_{j\in\Omega_{\alpha}}|j|$. 
						We now consider the part of \eqref{high} corresponding to the normal form:
						\begin{equation}\label{normeq}
							\partial_t w_{\alpha}(t)=\Lambda w_\alpha+\mathcal{L}_\alpha(t)z_{\alpha}(t).
						\end{equation}
						Since $\mathcal{L}_\alpha$ is Hamiltonian, we have
						$$\Vert w_\alpha(t)\Vert_{\ell^2}=\Vert w_\alpha(t_0)\Vert_{\ell^2}, \forall t,t_0\in [-T_r,T_r],$$
						therefore, $\forall |t|\leq T_r$		    
						\begin{align*}
							\Vert w(t)\Vert_s&=\sum_{\alpha}\sum_{j\in\Omega_{\alpha}}\e^{2sf(\langle j\rangle)}|w_{(j,\sigma)}(t)|^2 \\
							&\leq \sum_{\alpha}\sum_{j\in\Omega_{\alpha}}\e^{2sf(|\alpha|+C_1^*)}|w_{(j,\sigma)}(t)|^2 \\
							&\leq \sum_{\alpha}\e^{2s(f(|\alpha|)+C_f f(C_1^*))}\Vert w_{\alpha}(t)\Vert_{\ell_2}^2 \\
							&={C_{8}} \sum_{\alpha}\e^{2sf(|\alpha|)}\Vert w_{\alpha}(0)\Vert_{\ell_2}^2 \\
							&\leq {C_{8}}\sum_{\alpha}\sum_{j\in\Omega_{\alpha}}\e^{2sf(|j|)}|w_{(j,\sigma)}(0)|^2 \\
							&={C_{8}}\Vert w(0)\Vert_{s}.
						\end{align*}
						Hence, if we denote the flow map of \eqref{normeq} by $\mathcal{W}(t,\tau)$, we have: 
						$$\Vert\mathcal{W}(t,\tau)w_0\Vert_s\leq{C_{8}}\Vert w(0)\Vert_{s}.$$
						The solution to \eqref{high} is therefore given by
						$$w^>(t)=\mathcal{W}(t,0)w_0+\int_{0}^{t}\mathcal{W}(t,\tau)\Pi^>X_{R_d}(w^<,w^>)d\tau.$$
						Thus
						$$\Vert w^>(t)\Vert_s\leq{C_{8}}\Vert w_0\Vert_{s}+r{C_{8}}\e^{-C_{4}f(N)}|t|.$$

					\end{proof}
					
					We now combine the estimates for the low and high modes to get
					$$\Vert\psi(t) \Vert_s\leq (C_{8}+1)\Vert\psi_0\Vert_s+r(C_{8}+1)\e^{-C_{4}f(N)}|t|.$$
					Take $T^*=C_{3}\e^{C_{4}f(N)}.$ When $|t|<T^*,$
					\begin{align*}
						\Vert\psi(t)\Vert_s&\leq(C_{8}+1)r+(C_{8}+1)C_{3}r\\
						&\leq C_{2}r.
					\end{align*}
					Then, taking the norms $\Vert\cdot\Vert_s $ as the specific norms in three specific spaces, the main result can be obtained based on the discussion of the remainder size according to the Normal Form Lemma.

					\section{Applications to Physical Models}\label{sec:7}
					
					In this section, we apply our abstract results to two physically significant systems, demonstrating that both the fully dispersive Whitham-Schr\"odinger equation and the space fractional NLS equation fit into our general framework.
					
					\subsection{Fully Dispersive Whitham-Schr\"odinger Equation}
					We consider the fully dispersive Whitham-Schr\"odinger (FD-WS) equation:
					$$ \i\partial_t \psi = \sqrt{|D| \tanh(|D|) + m} \, \psi + p(|\psi|^2)\psi, \quad x \in \mathbb{T}^{\mathsf{d}}, t \in \mathbb{R}. $$
					The linear frequencies of this system correspond to
					$$ \omega_j(m) = \sqrt{|j|\tanh(|j|) + m}. $$
					To show that these frequencies satisfy Assumption \ref{ass:freq} with $\alpha = 1/2$, we write $ \omega_j(m) = |j|^{1/2} + r_m(j), $
					where the remainder term is given by:
					$$ r_m(j) = \sqrt{|j|\tanh(|j|) + m} - |j|^{1/2}. $$
					For $|j| \geq 1$, we can express the remainder as:
					$$ r_m(j) = |j|^{1/2} \left( \sqrt{\tanh(|j|) + \frac{m}{|j|}} - 1 \right). $$
					Using the well-known asymptotic expansion of the hyperbolic tangent, $\tanh(|j|) = 1 - 2\e^{-2|j|} + \mathcal{O}(\e^{-4|j|})$ as $|j| \to \infty$, we have:
					\begin{align*}
						r_m(j) &= |j|^{1/2} \left( \left( 1 + \frac{m}{|j|} - 2\e^{-2|j|} + \mathcal{O}(\e^{-4|j|}) \right)^{1/2} - 1 \right) \\
						&= |j|^{1/2} \left( \frac{m}{2|j|} - \e^{-2|j|} + \mathcal{O}\left( \frac{m^2}{|j|^2} \right) \right) \\
						&= \frac{m}{2|j|^{1/2}} - |j|^{1/2}\e^{-2|j|} + \mathcal{O}\left( |j|^{-3/2} \right).
					\end{align*}
					Thus, there exists a constant $C_{\mathtt{rem}} > 0$ such that for any $j \in \mathbb{Z}^{\mathsf{d}}$ and $m \in [m_1, m_2]$,
					$$ \left| r_m(j) \right| \leq C_{\mathtt{rem}} \langle j \rangle^{-1/2}, $$
					which satisfies the decay condition under Assumption \ref{ass:freq} with $\alpha = 1/2$.
					
					Furthermore, for any $1 \leq k \leq d$ and distinct indices $|j_1| < \dots < |j_k| < N$, we verify the non-degeneracy condition. The derivatives of $\omega_j(m)$ with respect to $m$ are given by
					$$ \frac{\d^l \omega_{j}}{\d m^l} = c_l (|j|\tanh(|j|) + m)^{\frac{1}{2} - l}, $$
					where $c_l = \prod_{n=0}^{l-1}(1/2 - n)$ for $l \ge 1$ and $c_0 = 1$. Consider the determinant
					$$
					D = \det \left[ \frac{\partial^l \omega_{j_s}(m)}{\partial m^l} \right]_{0 \leq l \leq k-1; 1 \leq s \leq k} = \det \left[ c_l (|j_s|\tanh(|j_s|) + m)^{\frac{1}{2} - l} \right].
					$$
					Factoring out the constants and the square root terms, we obtain
					$$
					D = \left( \prod_{l=0}^{k-1} c_l \right) \left( \prod_{s=1}^k (|j_s|\tanh(|j_s|) + m)^{1/2} \right) \det \left[ x_s^l \right]_{0 \leq l \leq k-1; 1 \leq s \leq k},
					$$
					where $x_s = (|j_s|\tanh(|j_s|) + m)^{-1}$. The latter is a Vandermonde determinant which can be explicitly evaluated as:
					$$
					\det \left[ x_s^l \right] = \prod_{1 \leq r < s \leq k} (x_s - x_r) = \prod_{1 \leq r < s \leq k} \frac{|j_r|\tanh(|j_r|) - |j_s|\tanh(|j_s|)}{(|j_r|\tanh(|j_r|) + m)(|j_s|\tanh(|j_s|) + m)}.
					$$
					Since $|j_s| \leq N$ and $|j_r|\tanh(|j_r|) - |j_s|\tanh(|j_s|) \geq C > 0$ for distinct integers, we find that the determinant is bounded from below by:
					$$ |D| \geq \frac{C_{\mathtt{nd}}}{N^{2d^2}}. $$
					Therefore, we have successfully verified both the asymptotic growth and the non-degeneracy conditions under Assumption \ref{ass:freq} for the fully dispersive Whitham-Schr\"odinger equation. Consequently, the main results (Theorems 1, 2, and 3) hold for this model.
					
					\subsection{Space Fractional Schr\"odinger Equation}
					We consider the space fractional Schr\"odinger (fNLS) equation:
					$$ \i\partial_t \psi = (-\Delta + m)^\beta \psi + p(|\psi|^2)\psi, \quad x \in \mathbb{T}^{\mathsf{d}}, t \in \mathbb{R} $$
					with critical range $0 < \beta < 1/2$. The linear frequencies of this system correspond to
					$$ \omega_j(m) = (|j|^2 + m)^\beta. $$
					By setting $\alpha = 2\beta \in (0,1)$, we can express the frequencies as:
					$$ \omega_j(m) = |j|^\alpha + r_m(j), $$
					where the remainder term is given by:
					$$ r_m(j) = (|j|^2 + m)^{\alpha/2} - |j|^\alpha. $$
					For $|j| \geq 1$, we can express the remainder as:
					$$ r_m(j) = |j|^\alpha \left( \left( 1 + \frac{m}{|j|^2} \right)^{\alpha/2} - 1 \right). $$
					Using the binomial expansion for $|j| \to \infty$, we obtain:
					\begin{align*}
						r_m(j) &= |j|^\alpha \left( \frac{\alpha m}{2|j|^2} + \mathcal{O}\left( \frac{m^2}{|j|^4} \right) \right) \\
						&= \frac{\alpha m}{2} |j|^{\alpha-2} + \mathcal{O}\left( |j|^{\alpha-4} \right).
					\end{align*}
					Since $\alpha - 2 < \alpha - 1$, there exists a constant $C_{\mathtt{rem}} > 0$ such that for any $j \in \mathbb{Z}^{\mathsf{d}}$ and $m \in [m_1, m_2]$,
					$$ \left| r_m(j) \right| \leq C_{\mathtt{rem}} \langle j \rangle^{\alpha-1}, $$
					which satisfies the decay condition under Assumption \ref{ass:freq}.
					
					Furthermore, for any $1 \leq k \leq d$ and distinct indices $|j_1| < \dots < |j_k| < N$, we verify the non-degeneracy condition. The derivatives of $\omega_j(m)$ with respect to $m$ are given by
					$$ \frac{\d^l \omega_{j}}{\d m^l} = c_l (|j|^2 + m)^{\beta - l}, $$
					where $c_l = \prod_{n=0}^{l-1} (\beta-n)$ for $l \ge 1$ and $c_0 = 1$. Consider the determinant
					$$
					D = \det \left[ \frac{\partial^l \omega_{j_s}(m)}{\partial m^l} \right]_{0 \leq l \leq k-1; 1 \leq s \leq k} = \det \left[ c_l (|j_s|^2 + m)^{\beta-l} \right].
					$$
					Factoring out the constants and the exponent terms, we obtain
					$$
					D = \left( \prod_{l=0}^{k-1} c_l \right) \left( \prod_{s=1}^k (|j_s|^2 + m)^\beta \right) \det \left[ x_s^l \right]_{0 \leq l \leq k-1; 1 \leq s \leq k},
					$$
					where $x_s = (|j_s|^2 + m)^{-1}$. The latter is a Vandermonde determinant which can be explicitly evaluated as:
					$$
					\det \left[ x_s^l \right] = \prod_{1 \leq r < s \leq k} (x_s - x_r) = \prod_{1 \leq r < s \leq k} \frac{|j_r|^2 - |j_s|^2}{(|j_r|^2 + m)(|j_s|^2 + m)}.
					$$
					Since $|j_s| \leq N$ and $|j_r|^2 - |j_s|^2 \geq 1$ for distinct integers, while $|j_s|^2 + m \le 2N^2$, we find that the determinant is bounded from below by:
					$$ |D| \geq \frac{C_{\mathtt{nd}}}{N^{2d^2}}. $$
					Therefore, we have successfully verified both the asymptotic growth and the non-degeneracy conditions under Assumption \ref{ass:freq} for the space fractional Schr\"odinger equation. Consequently, the main results (Theorems 1, 2, and 3) hold for this model.

					\appendix
					\renewcommand{\thesection}{\Alph{section}}
					\section*{Appendices}
					\section{Constants}
					\begin{align*}
						C_1&=s_0+C_{4},\\
						C_{2}&=2C_{8}.\\
						C_{3}&=\frac{C_{8}-1}{C_{8}+1}
						,\\
						C_{4}&=\frac{1}{2\sqrt[6]{6C_{5}}},\\
						C_{5}&=\max\{\frac{20}{1-\alpha},1+\frac{8}{\alpha} \},\\
						C_{P,1}&=\frac{8\e C_P}{\sqrt{\gamma}},\\
						C_{6}&=\max\{\frac{32C_P\e}{\gamma},2,\frac{24\e^2}{\gamma},\frac{16C_{P,1}\e}{\gamma},\log_3{48\e},\e^{2sC_ff(C_1^*)}\},\\
						C_7&=\frac{32\e^2}{\gamma},\\
						C_{8}&=\e^{-2sC_ff(C_1^*)}.
					\end{align*}

					\section{Technical Lemmas }\label{sec:B}
					
					\begin{lemma}\label{weight}
						The functions $f=x^\theta$ and $f=(\ln(x))^q,q\geq1$ both satisfy 
						$$f(\sum_{l=1}^{d}x_l)\leq f(x_1)+C_f\sum_{l=2}^{d}f(x_l),$$
						where $x_1\geq\dots\geq x_d\geq c, C_f<1.$
					\end{lemma}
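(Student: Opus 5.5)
The plan is to reduce the $d$-term inequality to a two-term inequality and then iterate. Concretely, I will first prove that there is a constant $C_f<1$ such that
\[
f(a+b)\le f(a)+C_f f(b)\qquad\text{for all } a\ge b\ge c. \tag{$\ast$}
\]
Granting $(\ast)$, set $S_k:=x_1+\dots+x_k$. Since $S_{k-1}\ge x_1\ge x_k\ge c$, applying $(\ast)$ with $a=S_{k-1}$ and $b=x_k$ gives $f(S_k)\le f(S_{k-1})+C_f f(x_k)$. Telescoping from $k=d$ down to $k=2$ yields exactly $f(\sum_l x_l)\le f(x_1)+C_f\sum_{l\ge2}f(x_l)$. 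The constant does not depend on $d$, which is what the block-width estimates in Section \ref{sec:6} require.

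\textbf{The Gevrey case $f(x)=x^\theta$, $0<\theta<1$.} This is the easy case. By the mean value theorem, and because $t^{\theta-1}$ is decreasing,
\[
(a+b)^\theta-a^\theta=\int_a^{a+b}\theta t^{\theta-1}\,\d t\le \theta\, b\, a^{\theta-1}.
\]
Since $a\ge b$, we have $a^{\theta-1}\le b^{\theta-1}$, so the right-hand side is at most $\theta b^\theta$. Hence $(\ast)$ holds with $C_f=\theta<1$ and any $c>0$.

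\textbf{The logarithmic case $f(x)=(\ln x)^q$.} Write $a=tb$ with $t\ge1$ and $L=\ln b$. The mean value theorem, with the intermediate point bounded by $a+b\le 2tb$, gives
\[
(\ln(a+b))^q-(\ln a)^q\le q\,(L+\ln(2t))^{q-1}\ln\!\Big(1+\tfrac1t\Big)\le q\,\frac{(L+\ln 2t)^{q-1}}{t}.
\]
For $q\ge1$, split the power: $(L+\ln 2t)^{q-1}\le 2^{q-1}\big(L^{q-1}+(\ln 2t)^{q-1}\big)$. The quantity $K_q:=\sup_{t\ge1}(\ln 2t)^{q-1}/t$ is finite. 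So the increment is at most $q2^{q-1}(L^{q-1}+K_q)$, and this is $\le C_f L^q$ for any fixed $C_f\in(0,1)$ as soon as $L\ge L_0(q,C_f)$. In the borderline case $q=1$, the bound $\ln(1+1/t)\le\ln 2$ already suffices once $\ln b\ge \ln 2/C_f$.

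\textbf{Main obstacle.} The only real subtlety is the logarithmic case. There the inequality with $C_f<1$ genuinely fails for small arguments, since $(\ln x)^q$ is not subadditive near $x=e$ and is negative for $x<1$. So the lower bound $c$ must be taken large, namely $c\ge e^{L_0}$, depending on $q$ and on the chosen $C_f$. I will state this dependence explicitly. This is harmless in the application, because the lemma is used only for frequencies $|j|>N$ with $N$ large, or for the block width $C_1^*$, which can be enlarged.
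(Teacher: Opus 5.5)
Your proof is correct and follows the paper's skeleton. Both arguments first establish the two-term inequality $f(a+b)\le f(a)+C_f f(b)$ for $a\ge b\ge c$, then telescope over the partial sums. The only difference is in checking the two-term case: you use mean-value bounds, whereas the paper uses the substitution $t=x_2/x_1$ for $x^\theta$ and, for $(\ln x)^q$, uses concavity to reduce to the diagonal case $x_1=x_2$. Your route gives the slightly better constant $C_f=\theta$ instead of $2^{\theta-1}$, and it makes explicit how $c$ depends on $q$ and $C_f$.
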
  
					\begin{proof}
						For $f(x)=x^\theta$, we take $c=2,C_f=2^{\theta-1}$, and prove the case of two variables:
						$$(x_1+x_2)^\theta\leq x_1^\theta+2^{\theta-1}x_2^\theta,x_1\geq2,x_2\geq2.$$ 
						By making the homogenizing substitution $t=\frac{x_2}{x_1}\leq1$, it suffices to prove:
						$$(1+t)^\theta\leq1+\frac{(2t)^\theta}{2},0<t\leq1,$$
						which is readily verified.
						Then for $x_d < ... < x_1$, we can get
						$$f(\sum_{l=1}^{d}x_l)\leq f(\sum_{l=1}^{d-1}x_l)+2^{\theta-1}f(x_d)\leq... \leq f(x_1)+C_f\sum_{l=2}^{d}f(x_l).$$
						
						For $f(x)=(\ln x)^q,$ we take $F(x)=f(x+x_2)-f(x)$. Notice that 
						$$f''=q(\ln x)^{q-2}(\frac{q-1-\ln(x)}{x}),$$ so when $c\geq\e^q, f''<0$, $F'(x)=f'(x+x_2)-f'(x)<0$, we thus have $F(x_1)\leq F(x_2)=f(2x_2)-f(x_2)$. Hence we just need to prove:
						$$f(2x_2)-f(x_2)\leq C_f f(x_2),$$
						namely $$\frac{\ln (x_2)+\ln2}{\ln(x_2)}\leq(1+\frac{\ln 2}{\ln c})^{\frac{1}{q}},$$
						which holds for sufficiently large $c$. The conclusion then follows by induction.
					\end{proof}
					
					\begin{lemma}[Norm estimate for $P$]\label{norm}
						When $s>s_0$, for any $P\in\mathcal{P}_d,d\geq3,$ we have $$|P|_{r,s}\leq C_Pr^{d-2},$$ where $s_0$ satisfies $\sum_{J\in\mathcal{Z}}\e^{(2C_f-2)s_0f(\langle J\rangle)}<\frac{1}{3}.$ 
					\end{lemma}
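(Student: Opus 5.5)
We estimate the vector field $X_P$ componentwise and then sum in the weighted $\ell^2$ norm, using the subadditivity property of the weight from Lemma \ref{weight} together with momentum conservation. Write $P=\sum_{\mathcal{J}\in\mathcal{I}_d}P_{\mathcal{J}}u^{\mathcal{J}}$ with $|P_{\mathcal{J}}|\le C_P$. The component of $X_P$ at $\bar J=(j,-\sigma)$ is, up to a factor $d$ and a sign, $\sum P_{(J,J_2,\dots,J_d)}u_{J_2}\cdots u_{J_d}$, where the sum runs over $(J_2,\dots,J_d)$ with $\sigma j+\sum_{l\ge2}\sigma_l j_l=0$. In particular $|j|\le\sum_{l\ge2}|j_l|$.

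The first step is the weight transfer. Order $\langle j_2\rangle\ge\dots\ge\langle j_d\rangle$; monotonicity of $f$ and Lemma \ref{weight} give
$$f(\langle j\rangle)\le f(\langle j_2\rangle)+C_f\sum_{l\ge3}f(\langle j_l\rangle).$$
Hence
$$\e^{sf(\langle j\rangle)}|(X_P)_J|\le dC_P\sum \e^{sf(\langle j_2\rangle)}|u_{J_2}|\prod_{l\ge3}\e^{C_fsf(\langle j_l\rangle)}|u_{J_l}|,$$
summing over all orderings. This is a convolution-type expression $a*b*\dots*b$ with $a_J=\e^{sf}|u_J|\in\ell^2$ and $b_J=\e^{C_fsf}|u_J|$. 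By Young's inequality, $\|a*b*\cdots\|_{\ell^2}\le\|a\|_{\ell^2}\|b\|_{\ell^1}^{d-2}$.

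The second step is the $\ell^1$ bound. By Cauchy--Schwarz,
$$\|b\|_{\ell^1}\le\Big(\sum_J \e^{(2C_f-2)sf(\langle J\rangle)}\Big)^{1/2}\|u\|_s .$$
Since $C_f<1$ and $s>s_0$, the prefactor is at most $(1/3)^{1/2}$ by the choice of $s_0$. Thus for $u\in B_s(r)$ we get $\|X_P\|_s\le C\,C_P\,r^{d-1}(1/3)^{(d-2)/2}$. The factors $d$ and $d!$ from orderings and derivatives are where the main obstacle lies. I would handle them by noting that the coefficients $P_{\mathcal{J}}$ are indexed by ordered multi-indices, so no symmetrization factor arises except the $d$ from differentiation. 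That $d$, times at most $(d-1)$ choices of which factor carries the largest index, is absorbed by $3^{-(d-2)/2}$: indeed $d^2 3^{-(d-2)/2}$ is bounded, and the bound becomes $\le1$ after adjusting constants. The Lemma \ref{weight} hypothesis $x_l\ge c$ is met by replacing $\langle j\rangle$ with $\max(c,\langle j\rangle)$, at the cost of a fixed constant absorbed into $C_P$. Dividing by $r$ then gives $|P|_{r,s}\le C_Pr^{d-2}$.
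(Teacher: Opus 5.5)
Your route is the paper's route. You bound $(X_P)_J$ componentwise, use momentum conservation and Lemma~\ref{weight} to put the full weight $\e^{sf}$ on the largest remaining mode and $\e^{C_fsf}$ on the others, and close with a convolution bound and Cauchy--Schwarz against $\sum_J\e^{(2C_f-2)s_0f(\langle J\rangle)}<\frac13$. You are more explicit than the paper, which applies Young's inequality only implicitly, treats the slot $m$ of the largest mode as if it were fixed (so your factor $d-1$ never appears), and omits the discussion at the weight-transfer step.

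Two of your steps, however, do not deliver the statement as written.

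\textbf{The threshold $c$.} Replacing $\langle j\rangle$ by $\max(c,\langle j\rangle)$ is not a fixed-constant change. The two weights differ by up to $\e^{s(f(c)-f(1))}$ on each low mode, so the comparison can cost up to $\e^{s(f(c)-f(1))(1+(d-2)C_f)}$. This depends on $s$ and grows exponentially in $d$, so it can be absorbed neither into an $s$-independent constant nor by $3^{-(d-2)/2}$. The correct resolution is that no threshold is needed where the lemma has content:
\begin{itemize}
\item For $f(x)=x^\theta$ one has $(1+t)^\theta\le 1+2^{\theta-1}t^\theta$ for all $t\in(0,1]$. The difference tends to $0$ as $t\to0$ and has derivative $\theta\,[(2t)^{\theta-1}-(1+t)^{\theta-1}]\ge 0$. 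Hence the inequality of Lemma~\ref{weight} holds for all $x_1\ge x_2>0$.
\item For weights with $f(1)=0$, such as $(\ln x)^q$, the hypothesis on $s_0$ can never be met. Every $J$ with $|j|\le 1$ contributes $1$ to the sum, so the sum exceeds $\frac13$.
\end{itemize}

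\textbf{The final constant.} The factor you end with, $d(d-1)3^{-(d-2)/2}$, equals $4$ at $d=4$ and exceeds $1$ for every $3\le d\le 10$. Both $C_P$ and the threshold $\frac13$ are fixed in the statement, so there is nothing left to ``adjust.'' What your argument actually proves is $|P|_{r,s}\le C\,C_P r^{d-2}$ with an absolute constant $C$ of about $4$. The paper reaches exactly $C_P$ only by two slips: it drops the $(d-1)$ factor, and it writes $9^{d-2}$ where Cauchy--Schwarz gives $3^{d-2}$. Even its own factor $d\,3^{-(d-2)/2}$ exceeds $1$ for $d=3,4$. An absolute factor in front of $C_P$ is harmless in the Iteration Lemma~\ref{iter}, but you should state the bound you actually obtain rather than claim $C_P$.
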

					\begin{proof}
						Let $P=\sum_{\mathcal{J}\in\mathcal{I}_d}P_{\mathcal{J}}u_{J_1}...u_{J_d}.$ We denote the multi-index $\{J_1,...J_{k-1},(j,-1),J_{k+1},...,J_d\}$ by $\hat{J}_{k,j}$, and  $\{J_1,...J_{k-1},J_{k+1},...,J_d\}$ by $\hat{J}_k.$ Then
						\begin{align*}
							(X_P)_{j,+1}&=-\i\sum_{k=1}^{d}\sum_{\hat{J}_{k,j}\in \mathcal{I}_d}P_{\hat{J}_{k,j}}u_{\hat{J}_k},\\
							|(X_P)_{j,+1}|&\leq C_P\sum_{k=1}^{d}\sum_{\hat{J}_{k,j}\in \mathcal{I}_d}|u_{\hat{J}_k}|,\\
							|(X_P)_{j,+1}|\e^{sf(\langle j\rangle)}&\leq C_P\sum_{k=1}^{d}\sum_{\hat{J}_{k,j}\in \mathcal{I}_d}|u_{\hat{J}_k}|\e^{sf(|j|)}.
						\end{align*}
						Notice that $|j|=|\mathcal{M}(J_1,...,J_{k-1},J_{k+1},J_d)|,$ from $\mathcal{M}(\hat{J}_{k,j})=0.$   When $d\geq3$,  we have
			\begin{align*}
			sf(\langle j\rangle)\leq sf(\sum_{l\neq k}\langle J_l\rangle)\leq sf(\langle J_m\rangle)+sC_f(\sum_{l\neq m,k}\langle J_l\rangle).
							\end{align*} 
	We omit a technical discussion here. 
Then 
	\begin{align*}
	|(X_P)_{(j,+1)}|e^{sf(\langle j\rangle)}&\leq C_P\sum_{k=1}^{d}\sum_{\hat{J}_{k,j}\in \mathcal{I}_d}		\e^{(1-C_f)sf(\langle J_m\rangle)}\prod_{J\in\hat{J}_k}|u_{J}|e^{sC_f(\langle J\rangle)},\\
	\sum_{j\in\mathbb{Z}}|(X_P)_{(j,+1)}\e^{sf(\langle j\rangle)}|&\leq C_P^2
		\sum_{j\in \mathbb{Z}} 
    \left(\sum_{k=1}^{d}\sum_{\hat{J}_{k,j}\in \mathcal{I}_d}
	\e^{(1-C_f)sf(\langle J_m\rangle)}\prod_{J\in\hat{J}_k}|u_{J}|\e^{sC_f(\langle J\rangle)}\right)^2\\
	&\leq d^2C_P^2(\sum_{J_m}|u_m|\e^{sf(\langle J_m\rangle)})\prod_{J\neq J_m,J\in\hat{J}_k}\left(\sum_{J}|u_J|\e^{sC_ff(\langle J\rangle )} \right)^2\\
	&\leq d^2C_P^2(\sum_{J_m}|u_m|\e^{2sf(\langle J_m\rangle)})\\
	&\prod_{J\neq J_m,J\in\hat{J}_k}(\sum_{J}|u_J|^2\e^{2sf(\langle J\rangle)})(\sum_{J}\e^{(2C_f-2)s_0f(|J|)})^{d-2}\\
	&\leq \frac{d^2}{9^{d-2}}C_P^2\Vert u\Vert_s^{2d-2}.
    \end{align*}
	Thus, the estimate $\Vert X_P\Vert_s\leq C_P\Vert u\Vert_s^{d-1}$ implies the conclusion $|P|_{r,s}\leq C_Pr^{d-2}$.
	\end{proof}
								
	\begin{lemma}[Truncating Lemma]\label{cut}
	For $s>s_0,$
	if a polynomial $P\in\mathcal{P}_d, d\geq3$ is of degree at least $3$ with respect to $u^>=0$, then we have
	$$|P|_{r,s} \leq C_{P}\frac{(2r)^{d-2}}{e^{(s-s_0)f(N)}}.$$ 
									
\end{lemma}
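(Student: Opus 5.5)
The plan is to redo the vector-field estimate of Lemma \ref{norm}, but to exploit the fact that at least three of the $d$ indices in every monomial of $P$ are high-frequency. Write $P=\sum_{\mathcal{J}\in\mathcal{I}_d}P_{\mathcal{J}}u^{\mathcal{J}}$, where every $\mathcal{J}$ with $P_{\mathcal{J}}\neq0$ has at least three components with $|j_l|>N$. As in Lemma \ref{norm}, the component $(X_P)_{(j,+1)}$ is a sum over $k$ and over multi-indices $\hat{J}_k$ of $d-1$ indices with momentum $j$. After differentiating, each such $\hat{J}_k$ still contains at least two high modes.

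The key weight inequality comes from momentum conservation and Lemma \ref{weight}. Order the indices of $\hat J_k$ so that $J_m$ has the largest $\langle J\rangle$. Then
\[
sf(\langle j\rangle)\leq sf(\langle J_m\rangle)+sC_f\sum_{l\neq m,k}f(\langle J_l\rangle),
\]
exactly as in Lemma \ref{norm}. Among the indices $l\neq m,k$ there is at least one high mode $J_h$ with $|j_h|>N$, because $\hat J_k$ contains at least two high modes and only one of them can be $J_m$.

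The gain comes from this factor. I will write $e^{sC_f f(\langle J_h\rangle)}=e^{s f(\langle J_h\rangle)}\,e^{-s(1-C_f)f(\langle J_h\rangle)}$ and split the second exponential as
\[
e^{-(s-s_0)(1-C_f)f(\langle J_h\rangle)}\cdot e^{-s_0(1-C_f)f(\langle J_h\rangle)}.
\]
Since $f$ is increasing and $|j_h|>N$, the first piece is at most $e^{-(s-s_0)(1-C_f)f(N)}$. The second piece is the summable factor that was already used in Lemma \ref{norm}, through $\sum_J e^{(2C_f-2)s_0 f(\langle J\rangle)}<1/3$.

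With this split, the Cauchy--Schwarz/Young convolution argument of Lemma \ref{norm} goes through unchanged, giving
\[
\|X_P\|_s\leq C_P\,e^{-(s-s_0)(1-C_f)f(N)}\,\|u\|_s^{d-1}.
\]
Taking the supremum over $B_s(r)$ and dividing by $r$ yields $|P|_{r,s}\le C_P r^{d-2}e^{-(s-s_0)(1-C_f)f(N)}$. The stated factor $(2r)^{d-2}$ leaves room for applying the lemma on the slightly larger balls $B_s(2r)$ used in the Iteration Lemma, and the factor $(1-C_f)$ can be absorbed by renaming $s_0$, in line with the paper's convention. The high mode $J_h$ can be taken to be $\geq c$, as Lemma \ref{weight} requires, once $N\ge c$.

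The main obstacle is the component where the differentiated index is itself the one carrying the largest weight, and more generally the case where the remaining high modes all end up in the slot $J_m$. This is why I need three high modes and not two: after removing one index by differentiation and one as $J_m$, at least one high index remains with the reduced weight $C_f f$.

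The other point to check is that the prefactor $d^2/9^{d-2}$ from Lemma \ref{norm} remains bounded by $C_P$ (up to the factor $2^{d-2}$) after the combinatorial count of which position carries the high mode. I would handle this by summing over at most $d$ choices for $h$ and absorbing the factor $d$ into $9^{-(d-2)}$ for $d\ge3$.
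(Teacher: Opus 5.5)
\textbf{The gap.} The failing step is the last one. Your splitting is fine: three high modes leave a high index $J_h\neq J_m$ after differentiation. But what it actually yields is
\[
|P|_{r,s}\le C\,r^{d-2}\,\mathrm{e}^{-(1-C_f)(s-s_0)f(N)},
\]
and the factor $1-C_f$ cannot be absorbed by renaming $s_0$. Writing $(1-C_f)(s-s_0)=s-\bigl(C_fs+(1-C_f)s_0\bigr)$, the new threshold would be $C_fs+(1-C_f)s_0$. This depends on $s$ and grows with it, whereas the lemma needs a fixed $s_0$, and so does its later use through $s>C_1=s_0+C_4$. For the Gevrey weight, $C_f=2^{\theta-1}$ is a fixed number in $(1/2,1)$, so this is a genuine multiplicative loss in the exponent.

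\textbf{How the paper differs.} The paper claims the full $(s-s_0)f(N)$ by another mechanism. It expands $P$ in powers of $u^>$ and invokes a tame estimate (Lemma 3.8 of BFM24) in which a surviving high-frequency factor is measured in the weak norm $\|\cdot\|_{s_0}$. It then uses $\|u^>\|_{s_0}\le \mathrm{e}^{-(s-s_0)f(N)}\|u\|_s$. Your convolution argument built on Lemma \ref{weight} never places a high factor in a norm with an $s$-independent index. Its only gain is the reduced weight $C_ff$, which is exactly why it loses $1-C_f$.

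\textbf{The stated bound is false for Gevrey weights.} You should not try to close this gap, because the tame estimate the paper borrows does not hold for $f(x)=x^\theta$. Take $\mathsf d=1$, $d=4$, and
\[
P=\sum_{|j|>N}\bigl(u_j^2u_0\bar u_{2j}+\bar u_j^2\bar u_0u_{2j}\bigr)\in\mathcal P_4 ,
\]
which has $C_P=1$ and three high modes in every monomial. Let $u$ be supported on the modes $0$ and $M=N+1$, with $|u_M|^2\mathrm{e}^{2sM^\theta}=2\rho^2/3$ and $|u_0|^2\mathrm{e}^{2s}=\rho^2/3$, so that $\|u\|_s=\rho<r$. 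Then $(X_P)_{(2M,+1)}=-\i\,u_M^2u_0$, so
\[
\|X_P(u)\|_s\ge \mathrm{e}^{s(2M)^\theta}|u_M|^2|u_0|=\tfrac{2}{3\sqrt3}\,\rho^3\,\mathrm{e}^{-s}\,\mathrm{e}^{-s(2-2^\theta)M^\theta}.
\]
The lemma, however, demands $|P|_{r,s}\le 4r^2\mathrm{e}^{-(s-s_0)N^\theta}$. Letting $\rho\to r$ and $N\to\infty$, this fails whenever $s(2^\theta-1)>s_0$, i.e.\ for every $\theta\in(0,1)$ once $s$ is large.

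\textbf{The right repair.} Your weaker bound is consistent with this example, since $1-2^{\theta-1}\le 2-2^\theta$. It is also all that is used later. Theorem \ref{Normal} only needs $|R_{d,>}|\le \mathrm{e}^{-C_4f(N)}$ with a small $C_4$. Your estimate gives this once $(1-C_f)(s-s_0)\ge C_4$ and $C_Rr\le1$, i.e.\ after enlarging $C_1$ to $s_0+C_4/(1-C_f)$. So state the lemma with exponent $(1-C_f)(s-s_0)f(N)$ instead of renaming the loss away.
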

	\begin{proof}
	First, we expand the polynomial $P$ as $P(u)=\sum_{l=3}^{d}P'_{J,l}(u^>)^l(u^<)^{d-l}$. Then following and the proof of Lemma 3.8 in \cite{BFM24} , we have
									\begin{align*}
										\Vert(X_P)(u^>,u^<)\Vert_s 
										&\leq C_{P}2^d(\Vert u^>\Vert_{s_0}\Vert u^<\Vert_{s}^{d-2}+\Vert u^>\Vert_{s_0}^2\Vert u^<\Vert_s^{d-3}),
									\end{align*}
									and
									$$\Vert u^>\Vert_{s_0}^2=\sum_{|J|>N}e^{2s_0 f(|J|)}|u_{J}|^2=\sum_{|J|>N}\frac{e^{2sf(|J|)}|u_J|^2}{e^{2(s-s_0)f(|J|)}}\leq\frac{\Vert u\Vert_s^2}{e^{2(s-s_0)f(|N|)}}.$$
									So
									$$\sup_{u\in B^s(r)}\Vert(X_P)(u^>,u^<)\Vert_s\leq C_{P}\frac{2^dr^{d-1}}{e^{(s-s_0)N}}, $$
									which means
									$$|P|_{r,s}\leq C_{P}\frac{2^dr^{d-2}}{e^{(s-s_0)f(N)}}.$$
								\end{proof}

								\begin{lemma}[Lie bracket estimate]\label{Lie}
									Given two polynomials $P\in\mathcal{P}_{p},Q\in\mathcal{P}_{q},|Q|_{r,s}\leq\delta:=\frac{\rho}{8e(r+\rho)},$ we have $\{P,Q\}\in\mathcal{P}_{p+q-2}$ and $ |\{P,Q\}|_{r,s}\leq |P|_{r+\rho,s}|Q|_{r+\rho,s}\frac{1}{2\delta}$. Moreover, 
									$$|ad_{Q}^kP|_{r,s}\leq|P|_{r+\rho,s}(\frac{|Q|_{r+\rho,s}}{2\delta})^k.$$
								\end{lemma}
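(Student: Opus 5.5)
The plan is to prove the single-bracket estimate first, by a Cauchy estimate on the larger ball $B_s(r+\rho)$. Then I iterate it for $ad_Q^k P$, exploiting that $P$ and $Q$ are homogeneous polynomials, so that their norms at different radii are related by exact scaling. I flag now that this second step is where the stated $k$-independent constant is in serious doubt.

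\emph{Single bracket.} That $\{P,Q\}\in\mathcal{P}_{p+q-2}$ is algebraic and routine. Momentum conservation holds because each contraction $\partial_{u_J}\cdot\partial_{u_{\bar J}}$ removes a pair with opposite momenta $\sigma j$ and $-\sigma j$. Reality follows from $\overline{P_{\mathcal J}}=P_{\bar{\mathcal J}}$. Boundedness of coefficients is inherited from $P$ and $Q$. For the norm I use $X_{\{P,Q\}}=[X_P,X_Q]=dX_P\,X_Q-dX_Q\,X_P$. For $u\in B_s(r)$ and any $h$ with $\|h\|_s\le1$, the map $z\mapsto X_P(u+zh)$ is analytic for $|z|<\rho$ and bounded there by $(r+\rho)|P|_{r+\rho,s}$. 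Cauchy's estimate then gives $\|dX_P(u)h\|_s\le (r+\rho)|P|_{r+\rho,s}/\rho$, and also $\|X_Q(u)\|_s\le r|Q|_{r,s}\le r|Q|_{r+\rho,s}$. Treating the second term symmetrically and dividing by $r$, I get
\[
|\{P,Q\}|_{r,s}\le \frac{2(r+\rho)}{\rho}|P|_{r+\rho,s}|Q|_{r+\rho,s}=\frac{1}{4\e\delta}|P|_{r+\rho,s}|Q|_{r+\rho,s}\le \frac{1}{2\delta}|P|_{r+\rho,s}|Q|_{r+\rho,s},
\]
which is the first claim, with a spare factor $1/(2\e)$.

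\emph{Iterates.} The standard approach splits $[r,r+\rho]$ into $k$ shells $r_i=r+i\rho/k$ and applies the bracket estimate on each shell. Since $|Q|_{r_i,s}\le|Q|_{r+\rho,s}$, this yields
\[
|ad_Q^kP|_{r,s}\le \Bigl(\frac{2(r+\rho/k)k}{\rho}\Bigr)^k|P|_{r+\rho,s}|Q|_{r+\rho,s}^k\le \Bigl(\frac{k}{4\e\delta}\Bigr)^k|P|_{r+\rho,s}|Q|_{r+\rho,s}^k .
\]
To reach the stated form I must absorb the factor $k^k$. I will try two devices, in this order.

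First, homogeneity. Since $ad_Q^iP\in\mathcal{P}_{p+i(q-2)}$, one has $|ad_Q^iP|_{\lambda r,s}=\lambda^{p+i(q-2)-2}|ad_Q^iP|_{r,s}$ exactly. The derivative of a homogeneous vector field at $u$ can therefore be estimated by a Bernstein/Harris-type inequality for homogeneous polynomials on the same ball, of the form $\|dF(u)\|\le c\,n\sup_{B_s(r)}\|F\|/r$, rather than by Cauchy on a larger ball. The radius loss then disappears, and only degree factors $n_i=p+i(q-2)$ remain.

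Second, I will use the hypothesis $|Q|_{r,s}\le\delta$ together with the spare factor $(2\e)^{-k}$ to offset what remains.

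This step is the main obstacle. The degrees $n_i$ grow linearly in $i$, so both devices still leave a product of order $k!$ against only $(2\e)^{-k}$. I expect that the stated estimate cannot be obtained this way for all $k$. If neither device closes the gap, the honest conclusion is that the lemma as stated needs either a $k$-dependent constant, namely the factor $(k/(4\e\delta))^k$ above, or must be read in the form $|ad_Q^kP/k!|$, which is what the Lie-series application in Lemma~\ref{iter} actually uses.
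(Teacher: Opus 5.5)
The paper gives no argument for this lemma, only a pointer to Appendix~B of \cite{BMP20}, so your proposal has to stand on its own. On the main question your suspicion is correct, and you can settle it instead of leaving it open: the second inequality is false as stated. No refinement of the homogeneity/Bernstein device can close the gap.

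Here is a counterexample inside the paper's framework. Work with the single mode $j=0$ and write $u_0=(x+\i y)/\sqrt2$. All monomials in $u_0,\bar u_0$ have zero momentum, real polynomials satisfy the reality condition, and the paper's bracket reduces to $\{F,G\}=F_xG_y-F_yG_x$. Take $Q=x^2y\in\mathcal{P}_3$ and $P=x^2\in\mathcal{P}_2$.

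Then $\{x^m,Q\}=m\,x^{m+1}$, so $ad_Q^kP=\pm(k+1)!\,x^{k+2}$. A direct computation gives $|x^n|_{r,s}=n(\kappa r)^{n-2}$ with $\kappa=\sqrt2\,\e^{-sf(1)}$, and $|Q|_{r,s}=A\kappa r$ for a numerical constant $A$. Hence
\[
|ad_Q^kP|_{r,s}=(k+2)!\,(\kappa r)^k \quad\text{versus}\quad |P|_{r+\rho,s}\Bigl(\frac{|Q|_{r+\rho,s}}{2\delta}\Bigr)^k=2\Bigl(\frac{A\kappa(r+\rho)}{2\delta}\Bigr)^k .
\]
For example, take $\rho=r$ small enough that $|Q|_{r,s}\le\delta=1/(16\e)$. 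The ratio of the two sides is then $(k+2)!/(2(16\e A)^k)\to\infty$.

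The mechanism is finite-time blow-up of the flow of $Q$, since $\dot x=x^2$. A geometric bound on $ad_Q^kP$ would make $\sum_k t^k\, ad_Q^kP/k!$ entire in $t$, which is impossible.

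What is true, and what your shell bound already gives via $k^k\le \e^k k!$, is
\[
\frac{1}{k!}|ad_Q^kP|_{r,s}\le\frac{1}{k!}\Bigl(\frac{k}{4\e\delta}\Bigr)^k|P|_{r+\rho,s}|Q|_{r+\rho,s}^k\le|P|_{r+\rho,s}\Bigl(\frac{|Q|_{r+\rho,s}}{4\delta}\Bigr)^k .
\]
This is the form the Lie-series expansions in Lemma~\ref{iter} actually need, because they only involve $ad^l_{G}/l!$. The extra $1/l!$ the paper keeps there should be dropped, and $|G_{k+1}|\le E$ still gives a convergent geometric sum. So stop at this corrected statement and record the counterexample.

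Three smaller repairs are needed elsewhere.

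\textbf{Shell product.} The factors are $2kr_i/\rho$ with $r_i$ ranging up to $r+\rho$. The product is therefore at most $(2k(r+\rho)/\rho)^k$, not $(2k(r+\rho/k)/\rho)^k$. Your final bound $(k/(4\e\delta))^k$ is nonetheless correct.

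\textbf{Cauchy estimate.} Your argument evaluates $X_P$ at $u+zh$ with complex $z$. That point leaves the real subspace $u_{(j,-1)}=\bar u_j$ on which $|\cdot|_{r+\rho,s}$ is a supremum, unless the norms are defined on the complexified ball or through majorants. A clean fix on the real Hilbert space $W_s$ is Kellogg's inequality, applied to the degree-$(p-1)$ homogeneous polynomials $u\mapsto\langle X_P(u),e\rangle$. For $\|u\|_s\le r$ and $\|h\|_s\le 1$ it gives $\|dX_P(u)h\|_s\le(p-1)r^{p-2}\sup_{B_s(1)}\|X_P\|_s$. The binomial inequality $(p-1)r^{p-2}\rho\le(r+\rho)^{p-1}$ then converts this into exactly your bound $(r+\rho)|P|_{r+\rho,s}/\rho$. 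So your homogeneity idea is the right tool at the level of a single bracket. With this repair your argument proves the first claim, with a spare factor $1/(2\e)$.

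\textbf{Coefficient bounds.} Boundedness of the coefficients of $\{P,Q\}$ is not simply inherited from $P$ and $Q$. It uses momentum conservation, which fixes the contracted index and makes each coefficient a finite sum.
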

								The proof can be found in Appendix B in \cite{BMP20}.

								\begin{lemma}\label{det}
									Let $u^{(1)},...,u^{(k)}$ be $k$ independent vectors with $\Vert u^{(i)} \Vert_{\ell^1}\leq1.$ Let $w\in\mathbb{R}^k$ be an arbitrary vector, then there exists $i\in\{1,...,k\},$ such that 
									$$|u^{(i)}\cdot w|\geq\frac{\Vert w\Vert_{\ell^1}\text{det}(u^{(1)},...,u^{(k)})}{k^{\frac{3}{2}}}.$$
								\end{lemma}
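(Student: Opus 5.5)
The plan is to invert the linear map $w\mapsto (u^{(i)}\cdot w)_{i=1}^k$ by Cramer's rule and to control the cofactors by a Hadamard/Gram-type bound. This yields the constant $k^{3/2}$ directly. Throughout, $\det(u^{(1)},\dots,u^{(k)})$ is understood in absolute value; if the signed determinant is negative the statement is trivial.

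\textbf{Setup.} Let $U$ be the $k\times k$ matrix whose $i$-th row is $u^{(i)}$, and set $v:=Uw$, so $v_i=u^{(i)}\cdot w$. Since the vectors are independent, $D:=|\det U|>0$ and $w=U^{-1}v$. By Cramer's rule, expanding along the $j$-th column gives
$$w_j=\frac{1}{\det U}\sum_{i=1}^k C_{ij}\,v_i,$$
where $C_{ij}$ is the $(i,j)$ cofactor of $U$. Hence
$$\|w\|_{\ell^1}\le \frac{\max_i|v_i|}{D}\sum_{i=1}^k\sum_{j=1}^k|C_{ij}|.$$
It therefore suffices to show $\sum_{i,j}|C_{ij}|\le k^{3/2}$. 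Once that holds, the index $i$ realizing $\max_i|v_i|$ satisfies $|u^{(i)}\cdot w|\ge \|w\|_{\ell^1}D/k^{3/2}$, which is the claim.

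\textbf{Cofactor bound (the only real step).} Fix $i$. The vector $c^{(i)}:=(C_{ij})_{j=1}^k$ is, up to sign, the generalized cross product of the $k-1$ rows $u^{(l)}$ with $l\neq i$. Its squared Euclidean norm equals the Gram determinant $\det\big(u^{(l)}\cdot u^{(l')}\big)_{l,l'\neq i}$, which is the sum of the squares of all $(k-1)\times(k-1)$ minors (Cauchy--Binet). By Hadamard's inequality for Gram matrices,
$$\|c^{(i)}\|_{\ell^2}\le\prod_{l\neq i}\|u^{(l)}\|_{\ell^2}\le\prod_{l\neq i}\|u^{(l)}\|_{\ell^1}\le 1.$$
Cauchy--Schwarz in $j$ then gives $\sum_j|C_{ij}|\le \sqrt{k}\,\|c^{(i)}\|_{\ell^2}\le\sqrt{k}$. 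Summing over the $k$ values of $i$ gives $\sum_{i,j}|C_{ij}|\le k^{3/2}$.

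\textbf{Main obstacle.} The main thing to get right is the exponent. The crude bound $|C_{ij}|\le 1$ for each single minor only gives $k^2$. The improvement to $k^{3/2}$ comes from bounding each cofactor row in $\ell^2$ as a whole via the Gram/Cauchy--Binet identity, and passing to $\ell^1$ only at the end at the cost of $\sqrt{k}$. I would verify the identity $\|c^{(i)}\|_{\ell^2}^2=\det(\text{Gram})$ by expanding $\det[c^{(i)};u^{(l)}, l\neq i]$ along its first row.
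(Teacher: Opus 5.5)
Your proof is correct and complete. The paper gives no argument of its own for this lemma; it simply imports it from Lemma 5.2 of \cite{BG06}. Your write-up therefore supplies what the paper leaves to the reference.

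The mechanism you use is the standard one for this estimate:
\begin{itemize}
\item Cramer's rule, writing $w=(\det U)^{-1}\sum_i v_i\,c^{(i)}$;
\item the bound $\|c^{(i)}\|_{\ell^2}\le\prod_{l\neq i}\|u^{(l)}\|_{\ell^2}\le 1$ for each cofactor vector (your Gram/Cauchy--Binet plus Hadamard step, or equivalently Hadamard applied to $U$ with its $i$-th row replaced by $c^{(i)}$);
\item one Cauchy--Schwarz step costing $\sqrt{k}$.
\end{itemize}
You are right that bounding single minors by $1$ only yields $k^2$.

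You could equally place the $\sqrt{k}$ loss on the $w$ side. From $\|w\|_{\ell^2}\le|\det U|^{-1}\sum_i|v_i|\,\|c^{(i)}\|_{\ell^2}\le k\max_i|v_i|/|\det U|$ and $\|w\|_{\ell^1}\le\sqrt{k}\,\|w\|_{\ell^2}$ you get the same constant $k^{3/2}$.

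A small bonus of your route is that it shows the hypothesis is only used through $\|u^{(i)}\|_{\ell^2}\le\|u^{(i)}\|_{\ell^1}\le 1$. So the lemma actually holds under the weaker $\ell^2$ normalization.
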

								\begin{lemma}\label{measure}
									Suppose that $g(\tau)$ is $m$ times differentiable on an interval $J\subset\mathbb{R}.$ Let $J_{h}:=\{m\in J\mid |g(\tau)|<h \},h>0.$ If on $J,|g^{(m)}(\tau)|\geq d>0,$ then $|J_h|\leq2(2+3+...+m+d^{-1})h^{\frac{1}{m}}.$ Here, $|\cdot| $ denotes the Lebesgue measure. 
								\end{lemma}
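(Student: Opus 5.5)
We would prove the lemma by induction on the order $m$ of the derivative that is bounded below, following the classical Pyartli-type sublevel set argument used for non-resonance estimates. To avoid a clash with the derivative order, we write $d_0>0$ for the lower bound on $|g^{(m)}|$ and $s=\tau$ for the variable. We read $J_h=\{\tau\in J:\ |g(\tau)|<h\}$.

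\textbf{Base case $m=1$.} Since $|g'|\ge d_0>0$ on the interval $J$, $g'$ has constant sign, so $g$ is strictly monotone. Hence $J_h=g^{-1}((-h,h))\cap J$ is a single interval, and by the mean value theorem its length is at most $2h/d_0$. This is bounded by $2(2+d_0^{-1})h$, so the claim holds in the form stated.

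\textbf{Inductive step.} Assume the bound for order $m-1$ and suppose $|g^{(m)}|\ge d_0$. Fix an auxiliary threshold $\delta>0$ and set $I_\delta=\{\tau\in J:\ |g^{(m-1)}(\tau)|<\delta\}$. Applying the base case to $g^{(m-1)}$, whose derivative is bounded below by $d_0$, shows that $I_\delta$ is one interval of length at most $2\delta/d_0$. Its complement $J\setminus I_\delta$ consists of at most two intervals, and on each of them $|g^{(m-1)}|\ge\delta$. The induction hypothesis, applied with lower bound $\delta$ in place of $d_0$, then bounds $J_h$ on each piece by $2(2+\dots+(m-1)+\delta^{-1})h^{1/(m-1)}$. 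Altogether
\[
|J_h|\le \frac{2\delta}{d_0}+4\Bigl(2+\dots+(m-1)+\delta^{-1}\Bigr)h^{\frac{1}{m-1}}.
\]
It remains to choose $\delta$ as a power of $h$ so that every term becomes a multiple of $h^{1/m}$. The natural choice $\delta=h^{1/m}$ makes the first term $2d_0^{-1}h^{1/m}$, which is exactly the $d_0^{-1}$ contribution in the claim. Using $h\le 1$, the constant terms on the right satisfy $h^{1/(m-1)}\le h^{1/m}$.

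\textbf{Main obstacle.} The difficulty is the term $\delta^{-1}h^{1/(m-1)}$ together with the factor $2$ coming from the two complementary pieces. With $\delta=h^{1/m}$ this term equals $h^{1/(m-1)-1/m}$, which is not $O(h^{1/m})$ once $m\ge3$. The naive recursion also doubles the constants at every step, producing something like $2^m$ rather than the linear sum $2+3+\dots+m$. Our first attempt would be to strengthen the induction hypothesis rather than iterate it blindly: on each piece where $|g^{(m-1)}|\ge\delta$ we would use that $g^{(m-1)}$ is monotone, so $g$ has at most $m$ monotonicity intervals and $J_h$ is a union of at most $m$ intervals. We would then bound each of these intervals directly and choose $\delta$ at each level so that the exponents telescope to $h^{1/m}$. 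Each level should contribute one additive constant $k$, which is where the sum $2+\dots+m$ comes from.

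\textbf{Normalisation.} We expect that the statement implicitly requires $h\le 1$ and a normalisation such as $d_0\le 1$. Otherwise the dependence on $d_0$, namely $d_0^{-1}$ instead of the scale-invariant $d_0^{-1/m}$, is not dimensionally consistent. We would make these assumptions explicit; in the application to Lemma~\ref{nonres} they are harmless, since $h$ is small and $d_0$ comes from the determinant bound via Lemma~\ref{det}, which gives $d_0\lesssim 1$.
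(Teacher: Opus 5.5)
Your inductive step does not close, and the proposal stops at that point. You split $J$ according to the size of $g^{(m-1)}$ and then apply the order-$(m-1)$ hypothesis to $g$ itself, with lower bound $\delta$. This produces $2\delta/d_0+4\delta^{-1}h^{1/(m-1)}+\dots$. The first term forces $\delta\lesssim h^{1/m}$, and then the second term is at least of size $h^{1/(m(m-1))}\gg h^{1/m}$ for $m\ge 3$. Even the best choice of $\delta$ yields only $h^{1/(2(m-1))}$. Your splitting does work for $m=2$, where it coincides with the correct one. For $m\ge 3$, however, the repair you describe (``strengthen the hypothesis'', ``choose $\delta$ at each level so that the exponents telescope'') is not an argument. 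It does not say which set is split, what the hypothesis is applied to, or how $\delta$ is chosen. As written, the proposal establishes the lemma only for $m\le 2$.

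The paper does not prove the lemma; it quotes Lemma 5.4 of \cite{BG06}. The standard argument behind it differs from yours in what the induction hypothesis is applied to: apply it to $g'$, not to $g$. Since $(g')^{(m-1)}=g^{(m)}$, the order-$(m-1)$ statement holds for $g'$ with the same lower bound $d_0$, so
\[
|\{\tau\in J:\ |g'(\tau)|<\delta\}|\le 2\,(2+\dots+(m-1)+d_0^{-1})\,\delta^{\frac{1}{m-1}}.
\]
Because $g^{(m)}$ has constant sign (Darboux), $g'$ has at most $m-1$ intervals of monotonicity. Hence this sublevel set has at most $m-1$ components, and its complement in $J$ is a union of at most $m$ intervals $I_i$. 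On each $I_i$ one has $|g'|\ge\delta$, so $g$ is strictly monotone there and $|J_h\cap I_i|\le 2h/\delta$. Taking $\delta=h^{(m-1)/m}$ gives $\delta^{1/(m-1)}=h/\delta=h^{1/m}$, hence
\[
|J_h|\le 2\,(2+\dots+(m-1)+d_0^{-1})\,h^{\frac1m}+2m\,h^{\frac1m}=2\,(2+\dots+m+d_0^{-1})\,h^{\frac1m}.
\]
So the additive $k$ at level $k$ counts the complementary intervals of $\{|g'|<\delta\}$, not the components of $J_h$.

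Nothing in this argument uses $h\le 1$ or $d_0\le 1$, so your normalisation paragraph is unnecessary. A bound that fails to be scale-invariant is merely non-sharp, not false. Indeed, the scale-invariant divided-difference bound $2m(h/d_0)^{1/m}$ is dominated by the stated one, since $m\,d_0^{-1/m}\le (m-1)+d_0^{-1}$ by AM--GM and $m-1\le 2+\dots+m$. Finally, for $m=1$ the sum $2+\dots+m$ is empty, so the claim is exactly your $2h/d_0$.
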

								
								Lemma \ref{det} and Lemma \ref{measure} are from Lemma 5.2 and Lemma 5.4 in  \cite{BG06}.

								\begin{lemma}\label{measure2}
									For frequency satisfies Non-degeneracy in Assumption \ref{ass:freq},
									the following non-resonant set 
									$$\mathcal{M}_\gamma=\{m\in[m_1,m_2]\mid\sum_{l=1}^{d}\sigma_l\omega_{j_l}\leq\frac{\gamma}{N^{4d^3}}, \exists \{j_1,...,j_d\}, |j_l|<N\}$$
									has measure estimate:
									$$|\mathcal{M}_\gamma|\leq \gamma.$$ 
								\end{lemma}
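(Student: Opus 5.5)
The plan is the standard Bambusi--Grébert measure argument: fix one resonant combination, show that its small-value set in $m$ is tiny, and take a union bound over all combinations. Fix a multi-index $\mathcal{J}=(J_1,\dots,J_k)$ with $k\le d$ and $|j_l|<N$, and consider the function
$$g_{\mathcal{J}}(m)=\sum_{l=1}^{k}\sigma_l\omega_{j_l}(m).$$
We may discard trivial combinations in which the same $j$ appears with opposite signs, since these cancel identically. Grouping equal indices, we rewrite $g_{\mathcal{J}}=\sum_{s=1}^{k'}c_s\omega_{i_s}(m)$ with distinct $i_s$, nonzero integer coefficients, $k'\le k\le d$, and $\|c\|_{\ell^1}\ge1$. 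We are only interested in combinations with $g_{\mathcal{J}}\not\equiv 0$, so after cancellation $c\neq0$.

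The first step is a lower bound on some derivative of $g_{\mathcal{J}}$. Consider the vectors $u^{(l)}=(\partial_m^l\omega_{i_s}(m))_{s=1}^{k'}$ for $0\le l\le k'-1$, normalized so that $\|u^{(l)}\|_{\ell^1}\le1$. The normalization loses at most a factor polynomial in $N$, because derivatives of $\omega_j$ are bounded by $CN$ uniformly on $[m_1,m_2]$. Assumption \ref{ass:freq}(2) gives $|\det|\ge C_{\mathtt{nd}}N^{-2d^2}$ for the unnormalized matrix, so the normalized determinant is at least $C N^{-2d^2-d}$. Lemma \ref{det} then provides, at each $m$, an index $l\le k'-1$ such that
$$|\partial_m^l g_{\mathcal{J}}(m)|\ge \|c\|_{\ell^1}\,C N^{-2d^2-d}d^{-3/2}=:\delta_N .$$

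The second step converts this into a measure bound. Cover $[m_1,m_2]$ by finitely many subintervals on each of which a fixed $l$ works, with the bound relaxed to $\delta_N/2$. Continuity together with a uniform bound on the $(l+1)$-st derivative gives subintervals of length $\gtrsim\delta_N/N$, so their number is polynomial in $N$. On each subinterval, Lemma \ref{measure} with $h=\gamma N^{-4d^3}$ bounds the bad set by $C(\delta_N^{-1}+d^2)h^{1/d}\lesssim N^{2d^2+d+1}\gamma^{1/d}N^{-4d^2}$. The key point is the exponent: since $h^{1/d}=\gamma^{1/d}N^{-4d^2}$, the gain $N^{-4d^2}$ beats the loss $N^{2d^2+O(d)}$.

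The third step is the union bound. The number of multi-indices is at most $(2N)^{\mathsf{d}d}\cdot 2^d$, and summing over them leaves a factor $N^{-2d^2+O(\mathsf{d}d)}\gamma^{1/d}$. For $d$ large relative to the spatial dimension $\mathsf{d}$ this is $\le\gamma$, possibly after rescaling $\gamma$ by replacing $\gamma$ with $\gamma^d$ in the definition; I would state that the constant adjustment is absorbed into $\gamma$.

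The main obstacle is precisely this bookkeeping: the paper uses the same letter $d$ both in $N^{2d^2}$ of the assumption and for the polynomial degree, and the exponents must be arranged so that the $4d^3$ in $h$ dominates the determinant loss, the subinterval count, and the $N^{\mathsf{d}d}$ counting. The $\gamma^{1/d}$ versus $\gamma$ discrepancy is handled by the reparametrization above. The zero-measure set $\mathcal{M}$ in the theorems then arises by intersecting over $\gamma\to0$ (and over $N$, using the $N$-summable decay).
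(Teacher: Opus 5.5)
Your route is the paper's route: Lemma \ref{det} for a lower bound on some $m$-derivative, Lemma \ref{measure} for the sublevel set, then a union bound. You also catch two things the paper's proof glosses over. The derivative index from Lemma \ref{det} depends on $m$, so a covering of $[m_1,m_2]$ is needed. And the estimate produces $\gamma^{1/d}$ rather than $\gamma$; the paper silently replaces $\gamma^{1/(i)}$ by $\gamma$.

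\textbf{The gap.} Your bookkeeping does not close. With $\delta_N^{-1}\approx N^{2d^2+d}$, the covering has about $N/\delta_N\approx N^{2d^2+d+1}$ subintervals. On each one, Lemma \ref{measure} gives $C(\delta_N^{-1}+d^2)h^{1/d}\approx\gamma^{1/d}N^{-2d^2+d}$. In your third step you multiply this per-subinterval bound only by the number of multi-indices; that is exactly how you reach $N^{-2d^2+O(\mathsf{d}d)}\gamma^{1/d}$. The subinterval count has been dropped.

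Restoring it, a single combination contributes about $\gamma^{1/d}N^{2d+1}$, which grows with $N$. The union bound over $(2N)^{\mathsf{d}d}$ combinations then fails. The reason is that $\delta_N^{-1}$ enters twice: once in the prefactor of Lemma \ref{measure}, and once through the number of subintervals. The true loss is therefore $N\delta_N^{-2}\approx N^{4d^2+2d+1}$, which the gain $h^{1/d}=\gamma^{1/d}N^{-4d^2}$ does not beat. Your claim that $4d^3$ dominates both the determinant loss and the subinterval count is false for the estimate you are using.

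\textbf{The repair.} Replace Lemma \ref{measure} by the scale-invariant sublevel estimate: if $|g^{(l)}|\ge\delta$ on an interval $J$, then $|\{m\in J:|g(m)|<h\}|\le C_l(h/\delta)^{1/l}$. This follows from the same induction, using the intermediate threshold $\delta^{1/l}h^{(l-1)/l}$ in place of $h^{(l-1)/l}$. The determinant loss now enters only through $\delta_N^{-1/l}$.
\begin{itemize}
\item For $N$ large, $h=\gamma N^{-4d^3}<\delta_N/2$, so subintervals with $l=0$ contain no bad $m$.
\item For $1\le l\le d-1$, each subinterval contributes at most $C(h/\delta_N)^{1/d}\lesssim\gamma^{1/d}N^{-4d^2+2d+1}$.
\item Multiplying by $N^{2d^2+d+1}$ subintervals and $(2N+1)^{\mathsf{d}d}2^d$ multi-indices gives $\lesssim\gamma^{1/d}N^{-2d^2+O(\mathsf{d}d)}$.
\end{itemize}
This is small and summable in $N$, so your reparametrization of $\gamma$ and the intersection over $\gamma$ then go through. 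The paper's own proof avoids the covering factor only by treating the index $(i)$ as independent of $m$. Your covering is necessary; it just has to be paired with the sharper estimate.

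\textbf{A secondary point.} In both models $\omega_j$ depends only on $|j|$. Distinct $j$ with equal $|j|$ therefore give identical columns and a vanishing determinant. You should group terms by $|j|$, not by $j$, before invoking non-degeneracy. This is really a defect in how Assumption \ref{ass:freq} is stated.
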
 
								\begin{proof}
									By Lemma \ref{det}, for any $\mathcal{J}=\{j_1,...,j_d\}$ satisfying $|j_l|<N,$ we can get an index $(i)$ such that
									$$|\sum_{l=1}^{d}\frac{\d^{(i)}\omega_{j_l}(m)}{\d m^{(i)}}|\geq\frac{C_{\beta}d}{N^{2d^2+2}}.$$
									We fix $\mathcal{J}$ to define $$\mathcal{M}_{\mathcal{J,\gamma}}:=\{m\in[m_1,m_2]\mid\sum_{l=1}^{d}\sigma_l\omega_{j_l}(m)\leq\frac{\gamma}{N^{4d^3}}\}.$$
									Then by Lemma \ref{measure}, we have
									\begin{align*}
										|\mathcal{M}_{\mathcal{J},\gamma}|&\leq\left(\frac{\gamma}{N^{4d^3}}\right)^{\frac{1}{(i)}}\frac{N^{2d^2+2}}{C_\beta}\\
										&\leq( \frac{\gamma^{\frac{1}{(i)}}}{N^{4d^2}} )\frac{N^{2d^2+2}}{C_\beta}\leq \frac{C_\beta\gamma}{N^{d^2}}.
										\end{align*}
											Thus
											\begin{align*}
												|\mathcal{M}_\gamma|&\leq\sum_{\mathcal{J},|j_l|<N}|\mathcal{M}_{\mathcal{J},\gamma}|\\
												&\leq\sum_{\mathcal{J},|j_l|<N}\frac{C_\beta\gamma}{N^{d^2}}\leq\frac{C_\beta\gamma(2N)^d}{N^{d^2}}\leq\gamma.
											\end{align*} 
										\end{proof}

	\begin{lemma}\label{nonres}
	There exists a set $\mathcal{M}$ of measure zero such that for any
	$m\in[m_1,m_2]\setminus\mathcal{M}$, 
	$N$ large enough, $\forall J_1,\dots,J_d $ with $|J_l|\leq N,l=1,\dots,d$, and $\sum_{l=1}^{d}\omega_{j_l}\sigma_l\neq0$, we have 
	$$\left|\sum_{l=1}^{d}\omega_{j_l}\sigma_l\right|\geq\frac{\gamma}{N^{4d^3}}.$$
	\end{lemma}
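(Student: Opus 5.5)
The plan is a Borel--Cantelli argument built on Lemma \ref{measure2}, applied along a dyadic sequence of scales and with a sequence of shrinking $\gamma$'s. Lemma \ref{measure2} gives, for each fixed $N$ and $\gamma$, an excluded set $\mathcal{M}_\gamma(N)$ of measure at most $\gamma$. However, it also yields a bound that decays in $N$. Its proof shows that each single-index set satisfies $|\mathcal{M}_{\mathcal{J},\gamma}|\le C_\beta\gamma N^{-d^2}$. Summing over the at most $(2N+1)^{\mathsf{d}d}$ choices of $\mathcal{J}$ and over signs $\sigma\in\{\pm1\}^d$, I expect a bound of the form
\[
|\mathcal{M}_\gamma(N)|\le C\,\gamma\,N^{-1}
\]
for $d$ large compared with $\mathsf{d}$. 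In any case it is summable over dyadic $N=2^n$.

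First, I restrict to combinations that are not identically zero as functions of $m$. Non-degeneracy in Assumption \ref{ass:freq}, via Lemma \ref{det}, shows that after cancelling repeated indices either the combination vanishes identically or some derivative is bounded below. The identically vanishing combinations are exactly those with $\sum\sigma_l\omega_{j_l}=0$ for all $m$. So for fixed $m$ the condition $\sum\sigma_l\omega_{j_l}(m)\neq0$ can still fail on a set of $m$'s, but these $m$'s lie in the excluded set anyway.

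Second, I fix $\gamma$ and set $\mathcal{M}^{(\gamma)}_n:=\mathcal{M}_\gamma(2^{n})$, using scale $2^{n}$ with the bound $\gamma 2^{-(n-1)4d^3}\ge\gamma N^{-4d^3}$ for all $N\in(2^{n-1},2^n]$. The small loss of a factor $2^{4d^3}$ is absorbed into $\gamma$. Since $\sum_n|\mathcal{M}^{(\gamma)}_n|<\infty$, Borel--Cantelli shows that $\mathcal{M}^{(\gamma)}:=\limsup_n\mathcal{M}^{(\gamma)}_n$ has measure zero. For $m\notin\mathcal{M}^{(\gamma)}$ there is an $n_0(m)$ beyond which the estimate holds, which is the ``$N$ large enough'' clause. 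The claim must also hold for every $d$ used in the iteration, so I take the countable union over $d\ge3$ (and over the $\gamma$ sequence if the constant should be uniform). The resulting set $\mathcal{M}:=\bigcup_d\mathcal{M}^{(\gamma)}(d)$ still has measure zero. If one insists on a fixed $\gamma$ for all large $N$, the limsup argument already supplies it, with the threshold $N$ depending on $m$.

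The main obstacle is the first and quantitative step: making sure the per-index measure bound decays fast enough in $N$ to beat the polynomial count of index tuples. The proof of Lemma \ref{measure2} is sketchy in this respect. Lemma \ref{measure} with derivative order $i\le d$ gives $|\mathcal{M}_{\mathcal{J},\gamma}|\lesssim(\gamma N^{-4d^3})^{1/d}N^{2d^2+2}=\gamma^{1/d}N^{-2d^2+2}$. I would redo this computation and check that $N^{-2d^2+2}(2N+1)^{\mathsf{d}d}$ is summable over dyadic $N$, which holds once $2d^2>\mathsf{d}d+3$. I would then accept $\gamma^{1/d}$ in place of $\gamma$, which is harmless because only smallness matters. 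The exponent $4d^3$ leaves ample room, so this should go through.
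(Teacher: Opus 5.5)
Your assembly of the exceptional set is sound, and it differs from the paper's. The paper sets $\mathcal{M}=\bigcap_{\gamma>0}\mathcal{M}_\gamma$ and uses $|\mathcal{M}_\gamma|\le\gamma$. This produces a $\gamma=\gamma(m)$. However, $\mathcal{M}_\gamma$ in Lemma~\ref{measure2} is defined at a single scale $N$, so it does not by itself give the estimate for all large $N$; that would need a union over $N$, and hence summability anyway. Several of your steps are correct and supply exactly the quantifier the paper skips: the $\limsup$ over dyadic scales with the threshold enlarged by $2^{4d^3}$, the removal of identically vanishing combinations, the union over $d$, and the corrections $\gamma^{1/d}$ for $\gamma$ and $(2N+1)^{\mathsf{d}d}$ for the tuple count. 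Together they give the statement for every fixed $\gamma$, with an $m$-dependent threshold $N_0(m)$. The price is that you need a per-tuple measure bound that is summable in $N$. The step you flag as the main obstacle does not deliver this the way you plan to do it.

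The gap is that Lemma~\ref{det} is pointwise in $m$. With $g=\sum_l\sigma_l\omega_{j_l}$, it gives for each $m$ some order $i=i(m)\le k-1$ with $|g^{(i)}(m)|\ge\delta\approx N^{-2d^2-2}$. Lemma~\ref{measure}, by contrast, needs one fixed order with $|g^{(i)}|\ge\delta$ on the whole interval, so you cannot apply it once on $[m_1,m_2]$. The proof of Lemma~\ref{measure2} takes the same shortcut, so its $N^{-d^2}$ cannot be imported either.

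You must instead cover $[m_1,m_2]$ by about $L/\delta\approx N^{2d^2+2}$ subintervals, on each of which one fixed derivative is $\ge\delta/2$; here $L$ bounds $|g^{(i+1)}|$, uniformly in $N$ for both models. Lemma~\ref{measure} has constant $\approx\delta^{-1}h^{1/i}$, and you use $h^{1/i}\le h^{1/d}$ with $h=\gamma N^{-4d^3}$. The per-tuple bound then becomes $\approx\delta^{-2}h^{1/d}\approx\gamma^{1/d}N^{4}$, not $\gamma^{1/d}N^{-2d^2+2}$. There is no decay at all, and the sums over $(2N+1)^{\mathsf{d}d}$ tuples and over $n$ diverge. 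Using $i\le d-1$ only improves this to about $N^{-4d}$, which the tuple count still beats when $\mathsf{d}\ge5$. So with these tools the exponent $4d^3$ does not leave ample room.

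The missing ingredient is the sharp sublevel-set estimate $|\{|g|<h\}|\le C_i(h/\delta)^{1/i}$ on an interval where $|g^{(i)}|\ge\delta$. Combine it with the covering and with $1\le i\le d-1$; pieces with $i=0$ contribute nothing once $h<\delta/2$. This gives per tuple
\[
\lesssim N^{2d^2+2}\bigl(\gamma N^{-4d^3+2d^2+2}\bigr)^{\frac{1}{d-1}}=\gamma^{\frac{1}{d-1}}N^{-2d^2-2d},
\]
which beats $2^d(2N+1)^{\mathsf{d}d}$ once $d$ is large compared with $\mathsf{d}$. After that, your Borel--Cantelli argument goes through as written.

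One smaller point: before invoking non-degeneracy, group indices with equal frequency (equal $|j|$ in both models), not merely equal $j$. The Vandermonde determinants of Section~\ref{sec:7} vanish when $|j_r|=|j_s|$.
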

	\begin{proof}
	The set
	$$\mathcal{M}:=\bigcap_{\gamma>0}\mathcal{M}_\gamma$$ is the desired set. $\mathcal{M}_\gamma$ is defined in Lemma  \ref{measure2}. Then
    $$|M|\leq|\mathcal{M}_\gamma|\leq\gamma,\forall\gamma>0.$$
	Moreover, $\forall m\in[m_1,m_2]\setminus\mathcal{M},\exists\gamma>0$ satisfies the non-resonant condition.
	\end{proof}

	\section*{Data Availability Statement}
	Data sharing is not applicable to this article as no new data were created or analyzed in this study.
										
	\section*{Acknowledgement}
	The second author (Y. Li) was supported by National Natural Science Foundation of China (12071175, 12471183 and 12531009).

\end{document}